\DeclareMathOperator{\Ann}{Ann}
\DeclareMathOperator{\ran}{ran}
\newcommand{\RR}{\mathbb{R}}  
\newcommand{\CC}{\mathbb{C}}  
\newcommand{\NN}{\mathbb{N}}  
\newcommand{\DD}{\mathbb{D}}  
\newcommand{\TT}{\mathbb{T}}  
\newcommand{\EE}{\mathbb{E}}  
\newcommand{\VV}{\mathbb{V}}
\newcommand{\WW}{\mathbb{W}}
\newcommand{\XX}{\mathbb{X}}
\newcommand{\pd}{\partial}
\newcommand{\sbs}{\subset}
\newcommand{\sbse}{\subseteq}
\newcommand{\spse}{\supseteq}
\newcommand{\Hil}{\mathcal{H}} 
\newcommand{\of}{\circ} 
\newcommand{\nin}{\notin} 
\newcommand{\bksl}{\backslash}
\newcommand{\SUF}{\Rightarrow}      %
\newcommand{\mc}[1]{\mathcal{#1}}
\newcommand{\cc}[1]{\overline{#1}}
\newcommand{\pvect}[1]{\begin{pmatrix} #1 \end{pmatrix}} 
\newcommand{\wtil}[1]{\widetilde{#1}}
\newcommand{\what}[1]{\widehat{#1}}
\newcommand{\inner}[2]{ \langle #1 , #2 \rangle} 
\newcommand{\gra}{\alpha}
\newcommand{\grb}{\beta}
\newcommand{\grd}{\delta}
\newcommand{\grg}{\gamma}
\newcommand{\grj}{\theta}
\newcommand{\grk}{\kappa}
\newcommand{\grl}{\lambda}
\newcommand{\grs}{\sigma}
\newcommand{\grw}{\omega}
\newcommand{\grz}{\zeta}
\newcommand{\grD}{\Delta}
\newcommand{\grJ}{\Theta}
\newcommand{\grS}{\Sigma}
\newcommand{\grW}{\Omega}
\theoremstyle{plain}
\newtheorem{theorem}{Theorem}[section]
\newtheorem{lemma}[theorem]{Lemma}
\newtheorem{proposition}[theorem]{Proposition}
\newtheorem{corollary}[theorem]{Corollary}
\theoremstyle{definition}
\newtheorem{example}[theorem]{Example}
\theoremstyle{remark}
\title{On polynomial $n$-tuples of commuting isometries}
\author{Edward J. Timko}
\begin{document}

\maketitle

\begin{abstract}
  We extend some of the results of Agler, Knese, and McCarthy \cite{AKM} to $n$-tuples of commuting isometries for $n>2$. Let $\VV=(V_1,\dots,V_n)$ be an $n$-tuple of a commuting isometries on a Hilbert space and let $\Ann(\VV)$ denote the set of all $n$-variable polynomials $p$ such that $p(\VV)=0$. When $\Ann(\VV)$ defines an affine algebraic variety of dimension 1 and $\VV$ is completely non-unitary, we show that $\VV$ decomposes as a direct sum of $n$-tuples $\WW=(W_1,\dots,W_n)$ with the property that, for each $i=1,\dots,n$, $W_i$ is either a shift or a scalar multiple of the identity. If $\VV$ is a cyclic $n$-tuple of commuting shifts, then we show that $\VV$ is determined by $\Ann(\VV)$ up to near unitary equivalence, as defined in \cite{AKM}.
\end{abstract}

\section{Introduction}

Agler, Knese, and McCarthy \cite{AKM} prove several results concerning pairs of commuting shifts subject to a polynomial equation. Recall that $V$ is a \textit{shift} on a Hilbert space $\Hil$ if $V$ is an isometry and $\bigcap_{j=0}^\infty V^j\Hil =\{0\}$. We extend some of the results in \cite{AKM} to certain collections of $n$ commuting isometries.

We begin by setting some notation and terminology. Let $\VV=(V_1,\dots,V_n)$ be an $n$-tuple of commuting isometries on a Hilbert space $\Hil$. Generally we suppose that there are polynomials $p_1,\dots,p_k\in\CC[X_1,\dots,X_n]$ such that $p_1(\VV)=\dots=p_k(\VV)=0$ and $p_1,\dots,p_k$ determine an algebraic variety of pure dimension 1. We say that $\VV$ is \textit{completely non-unitary} if there is no non-zero subspace $\mc{K}$ of $\Hil$ that is reducing for each element of $\VV$ such that $V_i|\mc{K}$ is a unitary operator for $i=1,\dots,n$. An arbitrary $n$-tuple of commuting isometries admits a decomposition as a direct sum of an $n$-tuple of unitaries and a completely non-unitary $n$-tuple, and thus we often focus on completely non-unitary $n$-tuples.

Given a set $S$ of polynomials in $\CC[X_1,\dots,X_n]$, we denote by $Z(S)$ the variety determined by $S$; that is, $Z(S)=\{z\in\CC^n:p(z)=0\text{ for all }p\in S\}$. An ideal $\mc{I}$ in $\CC[X_1,\dots,X_n]$ is said to be \textit{radical} if it coincides with its radical; that is, $p^k\in\mc{I}$ implies $p\in\mc{I}$. It is well known (see \cite[Thm. III.4.6]{Kendig}) that a radical ideal $\mc{I}$ can be written as the irredundant intersection of a unique finite family of prime ideals. These ideals are called the \textit{prime factors of $\mc{I}$}. The \textit{annihilator} of $\VV$ is the ideal $\Ann(\VV)$ of all polynomials $p\in\CC[X_1,\dots,X_n]$ such that $p(\VV)=0$. Below are a few examples where the annihilator is non-zero.
\begin{example}\label{Intro:AKMExample}
    Let $(V_1,V_2)$ be a commuting pair of shifts so that $\dim\ker V_1^*$ and $\dim\ker V_2^*$ are finite; say $k=\dim\ker V_2^*$. Then we can represent $(V_1,V_2)$ as a pair of Toeplitz operators $(T_{\grJ},T_{\grz\cdot I_k})$ on the $\CC^k$-valued Hardy space $H^2(\DD)\otimes\CC^k$, where $\grz$ is the coordinate function on $\DD$, $I_k$ the identity operator on $\CC^k$, and $\grJ$ is a matrix valued rational inner function (see \cite{AgMc2005} for details). Then there are polynomials $P\in\CC[X_1,X_2]$ and $Q\in\CC[X_2]$ so that $\det(w I_k-\Phi(z))=P(w,z)/Q(z)$ for $w,z\in\DD$. It then follows that $P(V_1,V_2)=0$.
\end{example}

\begin{example}
    Define the $4\times 4$ matrices $\Phi_1(z),\Phi_2(z)$ by
    \[ \Phi_1(z)u=(u_3,u_4,z u_2,z^3 u_1)^t, \quad \Phi_2(z)u=(u_2,z^2u_1,u_4,z^2u_3)^t, \]
    where $u=(u_1,u_2,u_3,u_4)^t$. Then $(V_1,V_2,V_3)=(T_{\Phi_1},T_{\Phi_2},T_{\grz\cdot I_4})$ acting on $H^2(\DD)\otimes \CC^4$ is a triple of commuting shifts satisfying the equations $V_1^2=V_2V_3$ and $V_2^2=V_3^2$.
\end{example}

In general, however, the annihilator may be $\{0\}$, as is the case for the pair $(S\otimes 1,1\otimes S)$ acting on $\ell^2(\NN)\otimes\ell^2(\NN)$ where $S$ is the unilateral shift on $\ell^2(\NN)$.

In what follows, assume $\VV$ to be an $n$-tuple of commuting isometries on a Hilbert space $\Hil$. We proceed now to state the main results of this paper, and begin by providing a description of the annihilator of a completely non-unitary $n$-tuple.
\begin{quotation}
  \noindent\textbf{Theorem \ref{CharAnn:MainThm}.} Suppose $\VV$ is completely non-unitary, and let $\mc{I}\sbse \Ann(\VV)$ be an ideal such that $\dim Z(\mc{I})=1$.
    \begin{enumerate}
        \item[\rm{(1)}] $\mc{I}=\Ann(\VV)$ if and only if $\mc{I}$ is radical with prime factors $\mc{I}_1,\dots,\mc{I}_m$ such that $\bigcap_{i\neq j}\mc{I}_j\nsubseteq \Ann(\VV)$ for $j=1,\dots,m$.
        \item[\rm{(2)}] If $\mc{I}=\Ann(\VV)$, then there exist non-zero mutually orthogonal $\VV$-invariant subspaces $\Hil_1,\dots,\Hil_m$ of $\Hil$ such that $\mc{I}_j=\Ann(\VV|\Hil_j)$ for each $j\in\{1,\dots,m\}$.
    \end{enumerate}
\end{quotation}
An algebraic variety $\mc{W}$ in $\CC^k$ is said to be a \textit{distinguished variety} if
\[ \mc{W}\sbse \DD^k\cup \TT^k \cup \EE^k \]
where $\DD$ is the open unit disc in $\CC$, $\TT$ is the unit circle, $\EE$ is the exterior of the closed unit disc, and exponents indicate Cartesian powers. In the special case that $\Ann(\VV)$ is a prime ideal, the $n$-tuple $\VV$ has a particularly simple structure.
\begin{quotation}
  \noindent \textbf{Theorem \ref{PrmIdl:MainThm}.} Suppose $\VV$ is completely non-unitary. If $\Ann(\VV)$ is a prime ideal and $Z(\Ann(\VV))$ has dimension 1, then, after a permutation of coordinates, there exists an $s\in\{1,\dots,n\}$ such that 
    \begin{enumerate}
        \item[\rm{(1)}] $\VV=(V_1,\dots,V_s,\grl_{s+1}I,\dots,\grl_n I)$ where $V_1,\dots,V_s$ are shifts and $\grl_{s+1},\dots,\grl_n$ are scalars of absolute value 1; and
        \item[\rm{(2)}] $Z(\Ann(\VV))=\mc{W}\times\{(\grl_{s+1},\dots,\grl_n)\}$ for some 1-dimensional distinguished variety $\mc{W}\sbse \CC^s$.
    \end{enumerate}
\end{quotation}
When $\Ann(\VV)$ is not prime, we have the following result.
\begin{quotation}
  \noindent\textbf{Theorem \ref{Decomp:MainThm}.} Suppose $\VV$ is completely non-unitary and $Z(\Ann(\VV))$ has dimension 1. There is a subset $\mc{F}\sbse (\{0\}\cup\TT)^n$ and a collection of non-zero $\VV$-reducing subspaces $(\Hil_z)_{z\in\mc{F}}$ such that $\Hil=\bigoplus_{z\in\mc{F}}\Hil_z$ and, for each $z=(z_1,\dots,z_n)\in\mc{F}$, the following hold.
  \begin{enumerate}
    \item[\rm{(1)}] If $i\in\{1,\dots,n\}$ and $z_i=0$, then $V_i|\Hil_z$ is a shift. If $z_i\neq 0$, then $(V_i-z_i I)|\Hil_z=0$.
    \item[\rm{(2)}] Suppose exactly $s$ components of $z$ are 0, and let $\grl_{s+1},\dots,\grl_n$ denote the non-zero components of $z$. Then there is a one dimensional distinguished variety $\mc{W}$ in $\CC^s$ such that, after a permutation of coordinates, $Z(\Ann(\VV|\Hil_z))=\mc{W}\times\{(\grl_{s+1},\dots,\grl_n)\}$.
  \end{enumerate}
\end{quotation}
Recall that a subset $S$ of $\Hil$ is said to be \textit{cyclic} for $\Hil$ if the set $\{p(\VV)h:p\in\CC[X_1,\dots,X_n],h\in S\}$ is total in $\Hil$. When $\VV$ is an $n$-tuple of shifts for which $\dim Z(\Ann(\VV))=1$, there is a close relationship between finite multiplicity and the presence of a finite cyclic set.
\begin{quotation}
  \noindent\textbf{Theorem \ref{FinMult:Equiv}.} If $\VV$ is an $n$-tuple of shifts and $Z(\Ann(\VV))$ has dimension $1$, then the following assertions are equivalent.
    \begin{enumerate}
        \item[\rm{(1)}] There is a finite cyclic set for $\VV$.
        \item[\rm{(2)}] $V_i$ has finite multiplicity for each $i\in\{1,\dots,n\}$.
        \item[\rm{(3)}] $V_i$ has finite multiplicity for some $i\in\{1,\dots,n\}$.
    \end{enumerate}
\end{quotation}
In fact, when $V_1,$ $\dots,$ $V_n$ each have finite multiplicity, the hypothesis that $\dim Z(\Ann(\VV))=1$ is unnecessary;
\begin{quotation}
    \noindent\textbf{Proposition \ref{FinMult:DimForFree}.} If $\VV$ is an $n$-tuple of shifts such that $V_i$ has finite multiplicity for $i=1,\dots,n$, then $Z(\Ann(\VV))$ has dimension $1$.
\end{quotation}
We remark that this is essentially a corollary of Example \ref{Intro:AKMExample}.

Two $n$-tuples of commuting isometries, say $\VV$ on a Hilbert space $\Hil$ and $\WW$ on a Hilbert space $\mc{K}$, are said to be \textit{nearly unitarily equivalent} if $\VV$ is unitarily equivalent to the restriction of $\WW$ to a finite codimensional $\WW$-invariant subspace and, similarly, $\WW$ is unitarily equivalent to the restriction of $\VV$ to a finite codimensional $\VV$-invariant subspace.
\begin{quotation}
  \noindent\textbf{Theorem \ref{FinalSec:MainThm}.} Suppose $\VV$ and $\WW$ are cyclic $n$-tuples of commuting shifts. If $\Ann(\VV)=\Ann(\WW)$ and $\dim Z(\Ann(\VV))=1$, then $\VV$ and $\WW$ are nearly unitarily equivalent.
\end{quotation}
Many of the results listed here have direct analogues in \cite{AKM}. In particular Theorem \ref{Decomp:MainThm} is based on \cite[Thm. 1.20]{AKM}, Theorem \ref{FinMult:Equiv} is based of the proof \cite[Lem. 1.11]{AKM}, and Theorem \ref{FinalSec:MainThm} is analogous to \cite[Thm. 3.3]{AKM}.

I would like to given special thanks Hari Bercovici for his many illuminating comments and criticisms during the development of this paper. I would also like to thank Carl C. Cowen, Chris Judge, and Norm Levenberg for their various conversations with me.

\section{Preliminary Material}

In this section we set down some notation and state a few results that we use. Throughout this paper $n$ denotes a positive integer, and $\VV=(V_1,\dots,V_n)$ an $n$-tuple of commuting isometries on a Hilbert space $\Hil$. A subspace $\mc{K}$ of $\Hil$ is \textit{$\VV$-invariant} if $\mc{K}$ is an invariant subspace for each $V_i$, while $\mc{K}$ is \textit{$\VV$-reducing} if $\mc{K}$ is a reducing for each $V_i$. In either case, we set $\VV|\mc{K}=(V_1|\mc{K},\dots,V_n|\mc{K})$. With this notation, the $n$-tuple $\VV$ is completely non-unitary if there is no non-zero $\VV$-reducing subspace $\mc{K}$ such that $V_i|\mc{K}$ is unitary for each $i$. If there is no non-zero $\VV$-reducing subspace $\mc{K}$ such that $V_i|\mc{K}$ is unitary for some $i$, then we say that $\VV$ is \textit{pure}.

\begin{proposition}[{\cite{Suciu}}]
    Given a pair of commuting isometries $(V_1,V_2)$ on a Hilbert space $\Hil$, 
    \[ (V_1,V_2)=(S_1,S_2)\oplus (U_1,T_2)\oplus (T_1,U_2)\oplus (W_1,W_2) \]
where $U_1,U_2,W_1,W_2$ are unitary, $T_1,T_2$ are shifts, and $(S_1,S_2)$ is pure.
\end{proposition}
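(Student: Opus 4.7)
The plan is to apply the Wold decomposition twice in succession, first to $V_1$ and then to $V_2$ on each resulting piece, and verify that all four resulting subspaces reduce both isometries so that we obtain a simultaneous orthogonal decomposition.

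First I would apply the Wold decomposition to $V_1$ alone: set $\Hil_u = \bigcap_{k\ge 0} V_1^k\Hil$ and $\Hil_s = \Hil\ominus \Hil_u$, so that $V_1|\Hil_u$ is unitary and $V_1|\Hil_s$ is a shift. The key invariance check is that $\Hil_u$ reduces $V_2$: if $x\in \Hil_u$, then for each $k$ we can write $x=V_1^k y_k$ and observe that $V_2^* x = V_2^*V_1^k y_k = V_1^k V_2^* y_k \in V_1^k\Hil$, whence $V_2^*x\in\Hil_u$; invariance under $V_2$ is immediate from commutation. Hence $\Hil_s$ also reduces $V_2$, and we have written $(V_1,V_2)=(V_1|\Hil_s,V_2|\Hil_s)\oplus (V_1|\Hil_u,V_2|\Hil_u)$ with $V_1|\Hil_s$ a shift and $V_1|\Hil_u$ unitary.

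Next I would decompose each summand by applying the Wold decomposition to $V_2$. On $\Hil_u$, let $\mc{K}_u = \bigcap_{k\ge 0}V_2^k\Hil_u$ and $\mc{K}_s = \Hil_u\ominus\mc{K}_u$; the same argument as above (with the roles of $V_1$ and $V_2$ swapped) shows these subspaces reduce $V_1|\Hil_u$. Since $V_1|\Hil_u$ is unitary, its restriction to either reducing subspace remains unitary, yielding pieces of type $(U_1,T_2)$ on $\mc{K}_s$ and $(W_1,W_2)$ on $\mc{K}_u$. Similarly, on $\Hil_s$ let $\mc{L}_u = \bigcap_{k\ge 0}V_2^k\Hil_s$ and $\mc{L}_s = \Hil_s\ominus \mc{L}_u$; these reduce $V_1|\Hil_s$, giving pieces $(V_1|\mc{L}_s,T_2')$ and $(T_1,U_2)$. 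On the two reducing subspaces of $\Hil_s$ we must verify that $V_1$ restricts to a shift: but if $V$ is a shift on $\Hil$ and $\mc{K}\sbse\Hil$ is reducing, then $\bigcap_k (V|\mc{K})^k\mc{K}\sbse \bigcap_k V^k\Hil=\{0\}$, so $V|\mc{K}$ is again a shift.

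Combining, we obtain the four pieces of the claimed decomposition. Finally I would verify that the piece $(S_1,S_2)=(V_1|\mc{L}_s,T_2')$ is pure: since both coordinates are shifts on this subspace, any non-zero reducing subspace inherits shift restrictions by the same argument, so neither coordinate can be unitary on a non-zero reducing subspace. The only genuine step requiring thought is the invariance of $\Hil_u$ (and analogues) under the adjoint of the commuting isometry; all remaining verifications reduce to the Wold decomposition of a single isometry together with the fact that shifts and unitaries are preserved under reduction.
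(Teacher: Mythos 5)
The central step of your argument --- that the single-operator Wold subspace $\Hil_u=\bigcap_{k\ge 0}V_1^k\Hil$ is automatically $V_2$-reducing --- is false, and the verification you give for it is incorrect. The identity $V_2^*V_1^k=V_1^kV_2^*$ does not follow from $V_1V_2=V_2V_1$: already for $V_1=V_2=S$, the unilateral shift, one has $V_2^*V_1=S^*S=I$ while $V_1V_2^*=SS^*\neq I$. Worse, the conclusion itself can fail. Take $\Hil=\bigl(L^2(\TT)\otimes\ell^2(\NN)\bigr)\oplus H^2(\DD)$, $V_1=(M_z\otimes I)\oplus T_z$, and $V_2=\pvect{I\otimes S & B\\ 0 & \tfrac12 I}$ where $S$ is the shift on $\ell^2(\NN)$ and $Bf=\tfrac{\sqrt 3}{2}(\cc{z}f)\otimes e_0$ for $f\in H^2(\DD)\sbse L^2(\TT)$. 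A direct check shows $V_2$ is an isometry commuting with $V_1$ and that $\Hil_u=L^2(\TT)\otimes\ell^2(\NN)$, yet $V_2^*\bigl((1\otimes e_0)\oplus 0\bigr)=0\oplus\tfrac{\sqrt3}{2}z$ has a nonzero $H^2(\DD)$-component, so $\Hil_u$ is $V_2$-invariant but not $V_2$-reducing. Hence the iterated one-variable Wold decomposition does not produce a simultaneous orthogonal decomposition of the pair, and the same error recurs in your second step when you apply it again to $V_2$ on each piece.

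The form of the statement is itself a warning sign: the first summand $(S_1,S_2)$ is only asserted to be \emph{pure}, which is strictly weaker than ``a pair of shifts.'' Your argument, if it worked, would place a pair of commuting shifts in that slot, a genuinely stronger conclusion that is known to be false in general; the completely non-unitary part of a pair of commuting isometries need not be a pair of shifts. A correct proof has to start from reducing subspaces for the \emph{pair} --- for instance the closed span of all $(V_1,V_2)$-reducing subspaces on which a fixed $V_i$ acts unitarily, or the doubly-indexed intersection $\bigcap_{j,k\ge 0}V_1^jV_2^k\Hil$ which one can show is $(V_1,V_2)$-reducing with both restrictions unitary --- rather than from the one-variable Wold subspace $\bigcap_kV_1^k\Hil$.
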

Here we are using the following notation. If, for each $i\in I$, $\mathbb{A}^{(i)}$ is an $n$-tuple on operators acting a Hilbert space $\Hil_i$, then
\[ \bigoplus_{i\in I}\mathbb{A}^{(i)}=\bigg(\bigoplus_{i\in I}A^{(i)}_1,\dots,\bigoplus_{i\in I}A^{(i)}_n\bigg) \]
is an $n$-tuple of operators acting on the Hilbert space $\bigoplus_{i\in I}\Hil_i$.

We also require some notation for polynomial functions. Denote by $\CC[\XX] = \CC[X_1,\dots,X_n]$ the ring of $n$-variable polynomials, and set $\XX=(X_1,\dots,X_n)$. As is common practice, we frequently identify a given $p\in \CC[\XX]$ with the corresponding map $z\mapsto p(z)$ on $\CC^n$. Let $\NN_0$ denote the set of non-negative integers $\{0,1,2,\dots\}$, and set $\XX^\gra=X_1^{\gra_1}\cdots X_n^{\gra_n}$
whenever $\gra=(\gra_1,\dots,\gra_n)\in\NN_0^n$. Given an $n$-tuple $\VV$ of commuting isometries, we set $p(\VV)=p(V_1,V_2,\dots,V_n)$ for each $p\in\CC[\XX]$, $\VV^\gra=V_1^{\gra_1}\cdots V_n^{\gra_n}$ for each $\gra\in\NN^n_0$, and $\VV^*=(V_1^*,\dots,V_n^*)$. The ideal $\{p\in\CC[\XX]:p(\VV)=0\}$ is called the \textit{annihilator} of $\VV$ and is denoted by $\Ann(\VV)$. Whenever $L$ is a linear subspace of $\CC[\XX]$, we set $L(\VV)=\{p(\VV):p\in L\}$, and when $S$ is a subset of $\Hil$ we define $L(\VV)S$ to be the (algebraic) linear span of $\{p(\VV)u:p\in L,u\in S\}$.

\begin{proposition}[{\cite{Suciu}}]
    Let $\VV$ be an $n$-tuple of commuting isometries on a Hilbert space $\Hil$. Then $\bigcap_{\gra\in\NN_0^n}\VV^\gra\Hil$ is a $\VV$-reducing subspace and the largest $\VV$-invariant subspace of $\Hil$ on which $\VV$ is an $n$-tuple of unitary operators. Thus $\VV$ is completely non-unitary if and only if $\bigcap_{\gra\in\NN_0^n}\VV^\gra\Hil=\{0\}$.
\end{proposition}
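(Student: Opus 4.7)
Let $\mc{K}=\bigcap_{\gra\in\NN_0^n}\VV^\gra\Hil$. The plan is to verify, in order, that $\mc{K}$ is $\VV$-invariant, that each $V_i|\mc{K}$ is unitary, that $\mc{K}$ is actually $\VV$-reducing, and that $\mc{K}$ contains every $\VV$-invariant subspace on which $\VV$ is unitary. The final equivalence is then immediate, since the largest invariant subspace is itself reducing.

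First I would check $\VV$-invariance directly: if $h\in\mc{K}$ and $\grb\in\NN_0^n$, then $h=\VV^\grb g$ for some $g\in\Hil$, and then $V_ih=V_i\VV^\grb g=\VV^\grb V_ig\in \VV^\grb \Hil$ by commutativity; since $\grb$ was arbitrary, $V_ih\in\mc{K}$. Next, to show $V_i|\mc{K}$ is unitary, it suffices (since $V_i$ is an isometry) to show $V_i$ maps $\mc{K}$ onto $\mc{K}$. Given $h\in\mc{K}$ and $\gra\in\NN_0^n$, the key trick is to apply membership at $\gra+e_i$ rather than at $\gra$: write $h=V_i\VV^\gra k$ for some $k\in\Hil$, apply $V_i^*$ to both sides (using $V_i^*V_i=I$), and conclude that $V_i^*h=\VV^\gra k\in\VV^\gra \Hil$. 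Since $\gra$ was arbitrary, $V_i^*h\in\mc{K}$; and since $h\in V_i\Hil$ we have $V_iV_i^*h=h$, so $h=V_i(V_i^*h)$ with $V_i^*h\in\mc{K}$, proving surjectivity.

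The computation in the previous step simultaneously shows $V_i^*\mc{K}\sbse\mc{K}$, so $\mc{K}$ is in fact $\VV$-reducing, not merely $\VV$-invariant. For maximality, suppose $\mc{L}\sbse\Hil$ is a $\VV$-invariant subspace on which each $V_i|\mc{L}$ is unitary. Then $V_i\mc{L}=\mc{L}$ for every $i$, hence $\VV^\gra\mc{L}=\mc{L}$ for every $\gra\in\NN_0^n$, so $\mc{L}\sbse\VV^\gra\Hil$ for every $\gra$, i.e.\ $\mc{L}\sbse\mc{K}$. Combining this with the fact that $\mc{K}$ is $\VV$-reducing yields the equivalence: if $\mc{K}\neq\{0\}$ it is itself a non-zero $\VV$-reducing subspace on which $\VV$ acts as unitaries, contradicting complete non-unitarity; conversely, any non-zero $\VV$-reducing subspace on which $\VV$ acts unitarily would be contained in $\mc{K}$, forcing $\mc{K}\neq\{0\}$.

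The only place that requires genuine care is step two, specifically the choice to test membership of $h$ at the multi-index $\gra+e_i$ instead of $\gra$ so that the isometry $V_i$ can be peeled off the front; the rest of the argument is essentially bookkeeping and uses no property of isometries beyond $V_i^*V_i=I$ together with pairwise commutativity.
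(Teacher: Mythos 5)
The paper states this proposition with a citation to Suciu and gives no proof of its own, so there is nothing internal to compare against. Your argument is correct and is the standard direct verification: the device of testing membership at $\gra+e_i$ to peel off $V_i$ is exactly the right move, the resulting $V_i^*\mc{K}\sbse\mc{K}$ together with $V_i\mc{K}\sbse\mc{K}$ gives reducibility, the relation $V_iV_i^*h=h$ for $h\in V_i\Hil$ supplies surjectivity of $V_i|\mc{K}$, and maximality over $\VV$-invariant unitary pieces follows from $V_i\mc{L}=\mc{L}$ forcing $\mc{L}\sbse\VV^\gra\Hil$ for all $\gra$. The closing equivalence with complete non-unitarity is then immediate, as you say, because the maximal $\VV$-invariant unitary piece you constructed happens to be reducing.
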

As we can always isolate the completely unitary part of a given $n$-tuple, we generally assume our $n$-tuple is completely non-unitary. If $\VV$ is completely non-unitary and $\mc{K}$ is a $\VV$-invariant subspace of $\Hil$, then $\bigcap_{\gra\in\NN_0^n}\VV^\gra\mc{K}\sbse \bigcap_{\gra\in\NN_0^n}\VV^\gra\Hil=\{0\}$. Thus the restriction of a completely non-unitary $n$-tuple is again completely non-unitary. We also note the following. Suppose $A$ is an operator on $\Hil$ commuting with $\VV$ and $\ker A=\{0\}$. If $\VV|\cc{\ran A}$ is completely non-unitary, then $\VV$ is completely non-unitary.

\begin{proposition}[{\cite{Ito}}]
    Given an $n$-tuple of commuting isometries $\VV$ on a Hilbert space $\Hil$, there exists an $n$-tuple of commuting unitaries $\wtil{\VV}=(\wtil{V}_1,\dots,\wtil{V}_n)$ on a Hilbert space $\wtil{\Hil}$ such that
    \begin{enumerate}
        \item[\rm{(1)}] $\Hil$ is a $\wtil{\VV}$-invariant subspace of $\wtil{\Hil}$;
        \item[\rm{(2)}] $\wtil{\VV}|\Hil=\VV$; and
        \item[\rm{(3)}] $\wtil{\Hil}=\bigvee_{\gra\in\NN^n}(\wtil{\VV})^{*\gra}\Hil$.
    \end{enumerate}
    These properties determine $\wtil{\VV}$ up to unitary equivalence. We call $\wtil{\VV}$ the \textit{minimal unitary extension} of $\VV$. Furthermore, if $A$ is an operator on $\Hil$ commuting with $\VV$, then there exists a unique operator $\wtil{A}$ on $\wtil{\Hil}$ that commutes with $\wtil{\VV}$ such that $\wtil{A}|\Hil=A$ and $\|\wtil{A}\|=\|A\|$. We call $\wtil{A}$ the \textit{canonical extension of $A$}.
\end{proposition}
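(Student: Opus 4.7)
My plan is to prove existence by induction on the number of non-unitary components of $\VV$, and to handle uniqueness and the canonical extension via inner-product computations on the dense span $\bigvee_{\gra\in\NN_0^n}\wtil{\VV}^{*\gra}\Hil$.

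For existence, the base case (all $V_i$ unitary) is immediate: take $\wtil{\Hil}=\Hil$. Otherwise, assume $V_1$ is not unitary, and apply the single-operator Sz.-Nagy minimal unitary extension to produce a unitary $\wtil{V}_1$ on $\wtil{\Hil}^{(1)}=\bigvee_{k\geq 0}\wtil{V}_1^{*k}\Hil$. I then lift each $V_j$ ($j\geq 2$) via the formula $W_j\wtil{V}_1^{*k}h=\wtil{V}_1^{*k}V_jh$ for $h\in\Hil$. The key calculation is that, for $k\leq l$,
\[ \inner{\wtil{V}_1^{*k}V_jh}{\wtil{V}_1^{*l}V_jh'}=\inner{V_jV_1^{l-k}h}{V_jh'}=\inner{V_1^{l-k}h}{h'}=\inner{\wtil{V}_1^{*k}h}{\wtil{V}_1^{*l}h'}, \]
using $V_1V_j=V_jV_1$, unitarity of $\wtil{V}_1$, and the fact that $V_j$ is an isometry. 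This simultaneously yields well-definedness and isometric extension of $W_j$. Commutativity of $W_i$ with $W_j$ and of $W_j$ with $\wtil{V}_1$ follows on the dense span by direct manipulation of the defining formula. A crucial observation is that if $V_j$ is already unitary on $\Hil$, then $\ran W_j\supseteq W_j\wtil{V}_1^{*k}\Hil=\wtil{V}_1^{*k}V_j\Hil=\wtil{V}_1^{*k}\Hil$ for every $k$, so $W_j$ is again unitary on $\wtil{\Hil}^{(1)}$. Hence each inductive step strictly reduces the number of non-unitary components, and the construction terminates after at most $n$ iterations; the minimality property (3) is visibly preserved at each stage.

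For uniqueness, suppose $\wtil{\VV}$ on $\wtil{\Hil}_V$ and $\wtil{\WW}$ on $\wtil{\Hil}_W$ are two minimal unitary extensions of $\VV$. For $\gra,\grb\in\NN_0^n$, let $\grg=\gra\vee\grb$ denote the componentwise maximum; unitarity then gives
\[ \inner{\wtil{\VV}^{*\gra}h}{\wtil{\VV}^{*\grb}h'}=\inner{\wtil{\VV}^{\grg-\gra}h}{\wtil{\VV}^{\grg-\grb}h'}=\inner{\VV^{\grg-\gra}h}{\VV^{\grg-\grb}h'}, \]
which depends only on $\VV$. The same identity holds with $\wtil{\WW}$ in place of $\wtil{\VV}$, so the map $U\wtil{\VV}^{*\gra}h=\wtil{\WW}^{*\gra}h$ extends from the dense span to a unitary intertwining the two extensions and fixing $\Hil$.

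For the canonical extension of $A$, I define $\wtil{A}\wtil{\VV}^{*\gra}h=\wtil{\VV}^{*\gra}Ah$. Applying the unitary $\wtil{\VV}^\grg$ (with $\grg$ dominating all exponents appearing in a finite linear combination) pushes everything into $\Hil$, yielding
\[ \Big\|\wtil{A}\sum_i c_i\wtil{\VV}^{*\gra_i}h_i\Big\|=\Big\|A\sum_i c_iV^{\grg-\gra_i}h_i\Big\|\leq \|A\|\,\Big\|\sum_i c_i\wtil{\VV}^{*\gra_i}h_i\Big\|, \]
which simultaneously gives well-definedness and $\|\wtil{A}\|\leq\|A\|$, with equality forced by $\wtil{A}|\Hil=A$; commutativity with $\wtil{\VV}$ is built into the defining formula, and uniqueness of $\wtil{A}$ follows from density. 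The main obstacle throughout is verifying that the ansatz for $W_j$ (and analogously for $\wtil{A}$) is well-defined independently of the representation of a vector as $\sum_i c_i\wtil{V}_1^{*k_i}h_i$; once the displayed inner-product identity is in hand, the remainder is essentially bookkeeping around the single-operator Sz.-Nagy extension.
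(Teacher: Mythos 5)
The paper attributes this proposition to It\^{o} and does not supply its own proof, so there is no internal argument to compare against; your task is therefore to give a self-contained construction, and what you have written accomplishes that. The inductive scheme (apply the single-operator Sz.-Nagy minimal unitary extension to one non-unitary component, lift the remaining isometries by the formula $W_j\wtil{V}_1^{*k}h=\wtil{V}_1^{*k}V_jh$, observe that the inner-product identity
\[
\inner{\wtil{V}_1^{*k}V_jh}{\wtil{V}_1^{*l}V_jh'}=\inner{\wtil{V}_1^{*k}h}{\wtil{V}_1^{*l}h'}
\]
gives both well-definedness and isometry, and note that lifted unitaries stay unitary so the number of non-unitary components strictly decreases) is a standard and correct route. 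Your uniqueness and canonical-extension arguments via the kernel computation $\inner{\wtil{\VV}^{*\gra}h}{\wtil{\VV}^{*\grb}h'}=\inner{\VV^{\grg-\gra}h}{\VV^{\grg-\grb}h'}$ with $\grg=\gra\vee\grb$, and via $\wtil{A}\wtil{\VV}^{*\gra}h=\wtil{\VV}^{*\gra}Ah$ with norm control obtained by applying a unitary $\wtil{\VV}^\grg$ to push everything into $\Hil$, are also correct and idiomatic.

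The one place you wave your hands is the assertion that ``the minimality property (3) is visibly preserved at each stage.'' This is true but not entirely immediate: after step $m$ you need to identify $\bigvee_{k\geq 0}(V_m^{(m)})^{*k}\Hil^{(m-1)}$ with $\bigvee_{\gra\in\NN_0^{m}}\prod_{j\leq m}(V_j^{(m)})^{*\gra_j}\Hil$, and to do so you must observe that for $j<m$ the space $\Hil^{(m-1)}$ is actually \emph{reducing} (not merely invariant) for the lifted $V_j^{(m)}$, because $V_j^{(m-1)}=V_j^{(m)}|\Hil^{(m-1)}$ is already unitary, so that $(V_j^{(m)})^*|\Hil^{(m-1)}=(V_j^{(m-1)})^*$. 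Once that is said, commutativity of the adjoints lets you reorganize the join, and the claim follows by induction. It would be worth making this explicit rather than calling it ``visible,'' since the minimality clause is precisely what underlies the uniqueness argument. Apart from that slight compression, the proof is sound.
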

 For any $p\in\CC[\XX]$, we have
    \[ \|p(\VV)\|=\|p(\wtil{\VV})\|. \]
 It follows that $\Ann(\VV)$ is a radical ideal; that is, if $p^k\in\Ann(\VV)$ for some positive integer $k$, then $p\in\Ann(\VV)$.

The following two propositions are based on results from Chapter V and VI of \cite{NFBK}.
\begin{proposition}\label{Fact:OpValFunc}
    Let $(V_1,V_2)$ be a pair of commuting shifts on a Hilbert space $\Hil$.
    \begin{enumerate}
        \item[\rm{(1)}] There is a Hilbert space $\mc{E}$ of dimension $\dim\ker V_2^*$ and an $\mc{L}(\mc{E})$-valued inner function $\grJ$ on $\DD$ such that $(V_1,V_2)$ is unitarily equivalent to pair of Toeplitz operators $(T_{\grJ},T_{\grz I})$ acting on $H^2(\DD)\otimes \mc{E}$. 
        \item[\rm{(2)}] In the case that both $V_1$ and $V_2$ have finite multiplicity, $\mc{E}$ is finite dimensional and $\grJ$ is a matrix valued inner function with entries consisting of rational functions.
    \end{enumerate}
\end{proposition}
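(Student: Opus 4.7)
The plan is to put $V_2$ into canonical shift form and then parametrize $V_1$ through the commutant of a vector-valued shift.

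For (1), I would begin by applying the Wold decomposition to the single shift $V_2$: this identifies $V_2$ unitarily with the unilateral shift of multiplicity $\dim(\Hil\ominus V_2\Hil)=\dim\ker V_2^*$. Concretely, letting $\mc{E}$ be a Hilbert space of this dimension and letting $\mc{E}_0=\ker V_2^*\subseteq\Hil$ serve as the wandering subspace, the map sending $\bigoplus_{k\ge 0} V_2^k e_k$ (for $e_k\in\mc{E}_0$) to $\sum_k\grz^k e_k\in H^2(\DD)\otimes\mc{E}$ is the required unitary, carrying $V_2$ to $T_{\grz\cdot I_\mc{E}}$. Under this identification $V_1$ is an operator on $H^2(\DD)\otimes\mc{E}$ commuting with $T_{\grz I_\mc{E}}$. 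I would then cite the classical commutant description (see \cite[Ch.~V]{NFBK}): the commutant of $T_{\grz I_\mc{E}}$ consists exactly of the analytic Toeplitz operators $\{T_\grJ:\grJ\in H^\infty(\mc{L}(\mc{E}))\}$. Thus $V_1=T_\grJ$ for some bounded $\mc{L}(\mc{E})$-valued analytic $\grJ$. The remaining step is to show $\grJ$ is inner: comparing $\|T_\grJ f\|^2=\int\|\grJ(e^{it})f(e^{it})\|^2\,dt/2\pi$ with $\|f\|^2$ as $f$ ranges over $H^2(\DD)\otimes\mc{E}$ forces $\grJ(e^{it})^*\grJ(e^{it})=I$ for a.e.\ $t\in\TT$.

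For (2), finite multiplicity of $V_2$ means $\dim\mc{E}=\dim\ker V_2^*<\infty$, so $\grJ$ takes values in the matrix algebra $\mc{L}(\mc{E})\cong M_k(\CC)$ with $k=\dim\mc{E}$. To see $\grJ$ has rational entries, I would use the finite multiplicity of $V_1$: since $V_1=T_\grJ$ is an isometry, $\ker V_1^*$ is the orthogonal complement of its range, namely the Beurling--Lax model space $\mc{K}_\grJ=(H^2(\DD)\otimes\mc{E})\ominus\grJ(H^2(\DD)\otimes\mc{E})$. Thus $\dim\mc{K}_\grJ=\dim\ker V_1^*<\infty$. I would then invoke the classical fact from the theory of matrix inner functions (see \cite[Ch.~VI]{NFBK}) that an $M_k(\CC)$-valued inner function $\grJ$ has $\dim\mc{K}_\grJ<\infty$ if and only if $\grJ$ is a finite Blaschke--Potapov product, equivalently iff the scalar inner function $\det\grJ$ is a finite Blaschke product. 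In either case the factors are rational, so $\grJ$ itself has rational entries.

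The main technical obstacle is invoking the two classical inputs with the correct conventions. The commutant identification is perhaps the more transparent of the two: it is a standard application of the functional calculus for the bilateral/unilateral shift and is cleanly stated in \cite{NFBK}. The characterization of finite-dimensional Beurling--Lax spaces for matrix inner functions is the subtler ingredient; one can approach it either through the Potapov factorization of $\grJ$, or by first observing that $\det\grJ$ is a scalar inner function with model space of dimension at most $\dim\mc{K}_\grJ$ and then arguing that $\grJ$ must be a finite product of elementary Blaschke--Potapov factors. Beyond these citations, the argument is essentially a bookkeeping reduction to the single-shift Wold theorem and the isometry condition on $T_\grJ$.
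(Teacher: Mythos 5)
Your proposal is correct and follows exactly the route the paper has in mind: the paper gives no proof but only remarks that these facts are based on results from Chapters V and VI of \cite{NFBK} (in particular that part (2) follows from Thm.~VI.3.1 and Prop.~VI.3.2 there), and your argument --- Wold model for $V_2$, commutant lifting of $V_1$ to an analytic Toeplitz operator $T_\grJ$, the isometry condition forcing $\grJ$ inner, and then, under finite multiplicity, identifying $\ker V_1^*$ with the model space $\mc{K}_\grJ$ and invoking the Blaschke--Potapov characterization of finite-dimensional model spaces --- is precisely the unpacking of those citations.
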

\noindent We remark that Proposition \ref{Fact:OpValFunc}(2) is a consequence of \cite[Thm. VI.3.1]{NFBK} and \cite[Prop. VI.3.2]{NFBK}.

\begin{proposition}[{\cite[Thm. 3.1]{BDF}}] \label{Fact:BDF}
    Let $V$ be a shift on a Hilbert space $\mc{E}$, and let $A$ be a contraction on $\mc{E}$ commuting with $V$. Letting $\wtil{V}$ denote the minimal unitary extension of $V$ and $\wtil{A}$ the canonical extension of $A$, set $D=(1-(\wtil{A})^*\wtil{A})^{1/2}$ and $\mc{D}=\cc{\ran D}$. We observe that $\mc{D}$ is a reducing subspace for $\wtil{V}$. On the Hilbert space $\Hil\oplus\left[\bigoplus_{i=0}^\infty\mc{D}\right]$, we define operators $W_1$ and $W_2$ by
    \begin{eqnarray*}
        W_1(h,d_0,d_1,\dots) & = & (Vh,\wtil{V}d_0,\wtil{V}d_1,\dots) \\
        W_2(h,d_0,d_1,\dots) & = & (Ah,Dh,d_0,d_1,\dots),
    \end{eqnarray*}
   where $h\in\Hil$ and $d_0,d_1,\dots\in\mc{D}$. Then $\WW_A=(W_1,W_2)$ is a pair of commuting isometries. Given a pair $\WW'$ of commuting isometries on a Hilbert space $\Hil'$, the following assertions are equivalent.
   \begin{enumerate}
       \item[\rm{(1)}] There is no $\WW'$-reducing subspace $\mc{K}'$ of $\Hil'$ such that $W_1'|\mc{K}'$ is unitary.
       \item[\rm{(2)}] There exists a contraction $A$ commuting with a shift $V$ such that $\WW'$ is unitarily equivalent to $\WW_A$.
   \end{enumerate}
   Additionally, the pair $(V,A)$ is unique up to unitary equivalence, and thus it is called the \textit{characteristic pair} of $\WW'$.
\end{proposition}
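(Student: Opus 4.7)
The plan is to prove the two implications separately and then address uniqueness.

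For (2)$\Rightarrow$(1), I would first verify that $\WW_A$ is a pair of commuting isometries. The isometry of $W_2$ reduces to $\|Ah\|^2+\|Dh\|^2=\|h\|^2$, which comes from $D^2=I-\wtil{A}^*\wtil{A}$ restricted to $\Hil$, and the commutation $W_1W_2=W_2W_1$ amounts to $VA=AV$ (given) together with $\wtil{V}D=DV$ on $\Hil$, the latter because $\wtil{V}$ commutes with $\wtil{A}$ and hence with $\wtil{A}^*\wtil{A}$. To rule out a joint $\WW_A$-reducing subspace $\mc{K}$ on which $W_1$ is unitary, observe that $V$ being a shift and each $\wtil{V}|\mc{D}$ being unitary give $\bigcap_{n\geq 0}W_1^n(\Hil\oplus\bigoplus_{i\geq 0}\mc{D})=\{0\}\oplus\bigoplus_{i\geq 0}\mc{D}$. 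Such an $\mc{K}$ must lie in this intersection, and iterating $W_2^*$ while using that $D$ is injective on $\mc{D}=(\ker D)^\perp$ forces every $\mc{D}$-component to vanish, so $\mc{K}=\{0\}$.

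For (1)$\Rightarrow$(2), begin with the Wold decomposition $\Hil'=\Hil\oplus\Hil_u$ of $W_1'$, where $V:=W_1'|\Hil$ is a shift and $W_1'|\Hil_u$ is unitary. Both summands are $W_2'$-invariant: $W_2'$ commutes with $W_1'$ and hence preserves $\Hil_u=\bigcap_n W_1'^n\Hil'$. Set $A=P_\Hil W_2'|\Hil$ and $D'=P_{\Hil_u}W_2'|\Hil$; the isometry of $W_2'$ yields $A^*A+D'^*D'=I$, making $A$ a contraction, and $VA=AV$ follows from $\Hil$ being $W_1'$-reducing. The main obstacle is to show $W_2'|\Hil_u$ is a shift, i.e. $\mc{K}_0:=\bigcap_n W_2'^n\Hil_u=\{0\}$. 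On $\Hil_u$ the unitary $W_1'|\Hil_u$ commutes with the isometry $W_2'|\Hil_u$, and one can check from this commutation that $\mc{K}_0$ is $\WW'$-reducing with $W_1'|\mc{K}_0$ unitary; hypothesis (1) then forces $\mc{K}_0=\{0\}$. With $W_2'|\Hil_u$ a shift, its wandering space $\mc{D}_0=\Hil_u\ominus W_2'\Hil_u$ yields $\Hil_u=\bigoplus_{n\geq 0}W_2'^n\mc{D}_0$. A short calculation shows $\ran D'\sbse\mc{D}_0$ and is dense therein; identifying $\mc{D}_0$ with $\cc{\ran(I-\wtil{A}^*\wtil{A})^{1/2}}$ via the canonical extensions of $V$ and $A$, and identifying the action of $W_1'$ on each $W_2'^n\mc{D}_0$ with $\wtil{V}|\mc{D}$, one obtains a unitary $\Hil'\to\Hil\oplus\bigoplus_{n\geq 0}\mc{D}$ that intertwines $\WW'$ with $\WW_A$.

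For uniqueness, observe that in the model $\WW_A$ the first summand $\Hil$ is exactly the Wold shift part of $W_1$, because $W_1|\bigoplus\mc{D}=\wtil{V}^{\oplus\infty}$ is unitary while $V$ is a shift. Hence any unitary equivalence $\WW_A\simeq\WW_{A'}$ must carry the first summand of one model onto the first summand of the other, which produces the desired unitary equivalence between $(V,A)$ and $(V',A')$.
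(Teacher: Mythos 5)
This proposition is quoted in the paper directly from Bercovici--Douglas--Foias (\cite[Thm.\ 3.1]{BDF}) and is not proved in the paper, so there is no in-paper argument to compare against. Your sketch, read on its own terms, has a few points that need repair. In the $(1)\Rightarrow(2)$ direction you assert that ``both summands are $W_2'$-invariant,'' but only the unitary part $\Hil_u=\bigcap_n W_1'^n\Hil'$ of the Wold decomposition is $W_2'$-invariant; the shift part $\Hil$ is not, and indeed in the model $W_2$ sends $(h,0,0,\dots)$ to $(Ah,Dh,0,\dots)$, which leaves $\Hil$. Your subsequent use of compressions $A=P_\Hil W_2'|\Hil$ and $D'=P_{\Hil_u}W_2'|\Hil$ quietly corrects for this, so the slip is not fatal, but the sentence as written is false. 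More seriously, the claim that $\ran D'$ is dense in $\mc{D}_0=\Hil_u\ominus W_2'\Hil_u$ does not hold in general: in the model one has $\ran D'=D\Hil$ while $\mc{D}_0=\mc{D}=\cc{D\wtil\Hil}$, and $D\Hil$ need not be dense in $D\wtil\Hil$. What one actually uses is that $\mc{D}_0$ is the smallest $W_1'$-reducing subspace of $\Hil_u$ containing $\ran D'$, i.e.\ $\mc{D}_0=\bigvee_{k\in\ZZ}(W_1'|\mc{D}_0)^k \ran D'$, which is where minimality of the unitary extension enters; the identification with $(\wtil{V}|\mc{D},\mc{D})$ should be built around that cyclicity rather than density of $\ran D'$ itself.

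In the $(2)\Rightarrow(1)$ direction, the reduction to $\mc{K}\sbse\{0\}\oplus\bigoplus_{i\ge 0}\mc{D}$ is right, but ``iterating $W_2^*$ and using that $D$ is injective on $\mc{D}$'' is too quick. Applying $W_2^*$ to $(0,d_0,d_1,\dots)\in\mc{K}$ yields $(P_{\Hil}Dd_0,\,d_1,d_2,\dots)$, so membership in $\{0\}\oplus\bigoplus\mc{D}$ only gives $P_{\Hil}Dd_0=0$, which does not by itself force $d_0=0$. You must also invoke $W_1$-reducibility of $\mc{K}$: from $(0,\wtil V^k d_0,\wtil V^k d_1,\dots)\in\mc{K}$ for all $k\in\ZZ$ one gets $Dd_0\perp \wtil V^k\Hil$ for all $k$, and then minimality of $\wtil V$ (so that $\bigvee_{k\in\ZZ}\wtil V^k\Hil=\wtil\Hil$) forces $Dd_0=0$, hence $d_0=0$; repeating kills the remaining components. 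Your uniqueness argument is sound once the above is fixed: the first summand of the model is exactly the shift part of $W_1$, so any unitary intertwining two models must match first summands and hence intertwine the pairs $(V,A)$.
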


We require a few results from elementary algebraic geometry, for which we refer the reader to \cite{CoxEtAl} and \cite{Kendig}. As we only concern ourselves with affine algebraic varieties in $\CC^\ell$, for some positive integer $\ell$, we generally drop the adjectives `affine' and `algebraic'. The following are some results we use repeatedly.

\begin{proposition}[{\cite[Thm. III.4.6]{Kendig}}] If $\mc{I}$ is a (non-zero) radical ideal of $\CC[\XX]$, then there is a unique finite collection of prime ideals $\mc{I}_1,\dots,\mc{I}_m$ such that $\bigcap_{i=1}^m\mc{I}_i=\mc{I}$ and $\bigcap_{i\neq j}^m\mc{I}_i \supsetneq \mc{I}$ for $j=1,\dots,m$. 
\end{proposition}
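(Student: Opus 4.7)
The plan is to combine the Noetherian property of $\CC[\XX]$ (from the Hilbert basis theorem) with the fact that $\mc{I}$ is radical to first establish the existence of \emph{some} finite prime decomposition, then prune it to an irredundant one, and finally verify uniqueness by a standard prime-avoidance argument.

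For existence, I would argue by Noetherian induction on the set $\mc{S}$ of radical ideals of $\CC[\XX]$ that cannot be written as a finite intersection of prime ideals. If $\mc{S}$ is nonempty, choose a maximal element $\mc{J}\in\mc{S}$. Then $\mc{J}$ is not itself prime, so there are $a,b\in\CC[\XX]$ with $ab\in\mc{J}$ but $a,b\notin\mc{J}$. Consider the radical ideals $\mc{J}'=\sqrt{\mc{J}+(a)}$ and $\mc{J}''=\sqrt{\mc{J}+(b)}$, which strictly contain $\mc{J}$ and therefore each admit a finite prime decomposition by maximality. The key verification is
\[ \mc{J}=\mc{J}'\cap\mc{J}'', \]
where $\supseteq$ is obtained as follows: if $f\in\mc{J}'\cap\mc{J}''$, then $f^k\in\mc{J}+(a)$ and $f^\ell\in\mc{J}+(b)$ for suitable $k,\ell$; expanding $f^{k+\ell}$ as a product and using $ab\in\mc{J}$ places $f^{k+\ell}\in\mc{J}$, whence $f\in\mc{J}$ since $\mc{J}$ is radical. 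This contradicts $\mc{J}\in\mc{S}$, so $\mc{S}=\emptyset$ and every radical ideal is a finite intersection of primes. To make the decomposition irredundant, simply drop any prime factor that contains the intersection of the others; the process terminates after finitely many steps.

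For uniqueness, suppose $\mc{I}=\mc{I}_1\cap\cdots\cap\mc{I}_m=\mc{J}_1\cap\cdots\cap\mc{J}_k$ are two irredundant decompositions into primes. Fix $i$. Then $\mc{I}_1\cdots\mc{I}_m\sbse \mc{I}_1\cap\cdots\cap\mc{I}_m\sbse \mc{J}_i$, and since $\mc{J}_i$ is prime this forces $\mc{I}_j\sbse\mc{J}_i$ for some $j$. Applying the same reasoning to $\mc{I}_j$ in place of $\mc{J}_i$ yields some $\mc{J}_\ell\sbse \mc{I}_j\sbse \mc{J}_i$; irredundancy of the $\mc{J}$'s then gives $\ell=i$, so $\mc{J}_i\sbse \mc{I}_j\sbse \mc{J}_i$ and thus $\mc{I}_j=\mc{J}_i$. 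Symmetry shows that the two collections of prime ideals coincide.

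The main obstacle, and essentially the only non-formal step, is establishing the identity $\mc{J}=\sqrt{\mc{J}+(a)}\cap\sqrt{\mc{J}+(b)}$ in the existence argument; this is precisely the place where the radical hypothesis on $\mc{I}$ is used in an essential way, and without it one is forced into the more delicate theory of primary decomposition rather than the cleaner prime decomposition available for radical ideals.
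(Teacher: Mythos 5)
The paper states this proposition as a cited result from Kendig (Theorem III.4.6) and gives no proof of its own, so there is no internal argument to compare against. Your proof is correct and is essentially the standard one: Noetherian induction gives existence of some finite prime decomposition (the identity $\mc{J}=\sqrt{\mc{J}+(a)}\cap\sqrt{\mc{J}+(b)}$ is verified correctly, using $ab\in\mc{J}$ and radicality to absorb $f^{k+\ell}$), pruning gives irredundancy, and uniqueness follows from prime avoidance together with the observation that in an irredundant intersection of primes no prime can contain another. One very minor point: the Noetherian induction step implicitly assumes $\mc{J}$ is a proper ideal (so that ``not prime'' yields $a,b\notin\mc{J}$ with $ab\in\mc{J}$); the degenerate case $\mc{J}=\CC[\XX]$ should be set aside at the outset, which is consistent with the implicit convention in the statement.
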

In other words, $\mc{I}$ is the \textit{irredundant} intersection of the prime ideals $\mc{I}_1,\dots,\mc{I}_m$; we call these the \textit{prime factors} of $\mc{I}$. Similarly, if $\mc{V}$ is a variety in $\CC^n$, then there exists a unique collection of irreducible varieties $\mc{V}_1,\dots,\mc{V}_m$ so that $\mc{V}=\bigcup_{i=1}^m\mc{V}_i$ and $\mc{V}\supsetneq \bigcup_{j\neq i}\mc{V}_j$ for every $i\in\{1,\dots,m\}$. We call $\mc{V}_1,\dots,\mc{V}_m$ the \textit{irreducible components} of $\mc{V}$.

\begin{proposition}[{\cite[Thm. IV.3.1]{Kendig}}]\label{Fact:CodimIneq}
    If $\mc{V}$ and $\mc{W}$ are irreducible varieties in $\CC^n$ and $\mc{V}\cap\mc{W}\neq \emptyset$, then
    \[ \dim\mc{V} + \dim\mc{W}\leq n+\dim(\mc{V}\cap\mc{W}). \]
\end{proposition}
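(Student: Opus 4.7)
The plan is to prove this via the classical diagonal trick, reducing the intersection inequality to a repeated application of Krull's principal ideal theorem on a single variety.

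First I would form the product $\mc{V}\times\mc{W}\sbse \CC^{2n}$. Since $\CC$ is algebraically closed and both $\mc{V}$ and $\mc{W}$ are irreducible, the product is irreducible in $\CC^{2n}$, and a standard computation (e.g.\ via transcendence degrees of function fields, or via a Noether normalization argument) gives
\[ \dim(\mc{V}\times\mc{W})=\dim\mc{V}+\dim\mc{W}. \]
Next I would introduce the diagonal $\grD=\{(x,x):x\in\CC^n\}\sbse \CC^{2n}$ and observe that the restriction to $\mc{V}\cap\mc{W}$ of the map $x\mapsto (x,x)$ is a biregular isomorphism of varieties
\[ \mc{V}\cap\mc{W}\;\cong\;(\mc{V}\times\mc{W})\cap\grD, \]
so in particular the two sides have the same dimension.

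The main step is to bound $\dim\big((\mc{V}\times\mc{W})\cap\grD\big)$ from below. The diagonal $\grD$ is cut out in $\CC^{2n}$ by the $n$ linear polynomials $X_i-Y_i$ for $i=1,\dots,n$, so
\[ (\mc{V}\times\mc{W})\cap\grD = (\mc{V}\times\mc{W})\cap Z(X_1-Y_1)\cap\dots\cap Z(X_n-Y_n). \]
Since $\mc{V}\cap\mc{W}\neq\emptyset$, each partial intersection is non-empty. Applying Krull's Hauptidealsatz (in its geometric form: on an irreducible variety, each proper hypersurface section either is empty or has every irreducible component of codimension exactly one) inductively to $\mc{V}\times\mc{W}$ and the hypersurfaces $Z(X_i-Y_i)$, I would conclude that every irreducible component of the intersection has dimension at least $\dim(\mc{V}\times\mc{W})-n$. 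Combining this with the identification of the intersection with $\mc{V}\cap\mc{W}$ and the product-dimension formula yields
\[ \dim(\mc{V}\cap\mc{W})\geq \dim\mc{V}+\dim\mc{W}-n, \]
which rearranges to the desired inequality.

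The main obstacle is the correct application of Krull's theorem to the chain of successive hypersurface sections: one must make sure that at each stage we pass to an irreducible component of the current intersection that still meets the next hypersurface, and track dimensions component-by-component. The non-emptiness hypothesis $\mc{V}\cap\mc{W}\neq\emptyset$ is exactly what guarantees that such a chain exists, so that no component is lost and the dimension drops by at most one at each step. The other mild technical point, that $\mc{V}\times\mc{W}$ is irreducible of the expected dimension, is standard over an algebraically closed field and is where I would cite the relevant result from \cite{Kendig} or \cite{CoxEtAl}.
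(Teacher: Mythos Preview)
Your argument is correct and is the standard proof of this inequality via the diagonal embedding and repeated use of Krull's Hauptidealsatz. Note, however, that the paper does not supply its own proof of this proposition: it is simply quoted as \cite[Thm.~IV.3.1]{Kendig} and used as a black box, so there is no in-paper argument to compare against. Your proof is essentially the one given in Kendig's text, so in that sense you have reproduced the cited source rather than diverged from it.
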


\begin{proposition}[{\cite[Thm. IV.2.15]{Kendig}}]\label{Fact:ProperSubvar} If $\mc{V}$ is an irreducible variety in $\CC^n$ and $\mc{W}$ is a proper subvariety, then $\dim\mc{W}<\dim\mc{V}$.
\end{proposition}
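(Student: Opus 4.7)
The plan is to reduce to the case where $\mc{W}$ is irreducible, and then to exhibit a strictly descending chain of irreducible subvarieties in $\mc{V}$ that is strictly longer than any such chain in $\mc{W}$. First, I would decompose $\mc{W}$ as the irredundant union $\mc{W}=\mc{W}_1\cup\cdots\cup\mc{W}_k$ of its irreducible components, and select an irreducible component $\mc{W}_0$ of maximum dimension, so that $\dim\mc{W}_0=\dim\mc{W}$. Because $\mc{V}$ is irreducible and $\mc{W}\subsetneq\mc{V}$, the equality $\mc{W}_0=\mc{V}$ would force $\mc{V}\sbse\mc{W}$, a contradiction. Hence $\mc{W}_0$ is a proper irreducible subvariety of the irreducible variety $\mc{V}$, and it suffices to establish the inequality $\dim\mc{W}_0<\dim\mc{V}$.

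For the reduced problem I would invoke the characterization of $\dim\mc{X}$, for an irreducible variety $\mc{X}$, as the maximum length $d$ of a strictly descending chain $\mc{X}=\mc{X}_0\supsetneq\mc{X}_1\supsetneq\cdots\supsetneq\mc{X}_d$ of non-empty irreducible subvarieties; equivalently, the Krull dimension of the coordinate ring $\CC[\mc{X}]=\CC[\XX]/I(\mc{X})$ via the correspondence between irreducible subvarieties and prime ideals. Given a maximal such chain inside $\mc{W}_0$ of length $\dim\mc{W}_0$, prepending $\mc{V}$ at the top produces a strictly descending chain of irreducible subvarieties of $\mc{V}$ of length $\dim\mc{W}_0+1$. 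Therefore $\dim\mc{V}\geq\dim\mc{W}_0+1>\dim\mc{W}$, as required.

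The only non-trivial ingredient is the chain characterization of dimension itself, which requires the Noether normalization and dimension theorem of commutative algebra, or equivalently the identification with the transcendence degree of the function field $\CC(\mc{V})$ over $\CC$; this is the foundational material that Kendig develops earlier in the cited chapter. Once that characterization is granted, the argument above is essentially combinatorial and presents no further obstacle. I would expect the hardest part, if one instead tried to avoid the chain characterization, to be the independent proof that $\CC[\mc{V}]/P$ has strictly smaller transcendence degree than $\CC[\mc{V}]$ for every nonzero prime $P$ in the finitely generated $\CC$-algebra domain $\CC[\mc{V}]$; this is precisely where the hypothesis that $\mc{V}$ is irreducible (so that $\CC[\mc{V}]$ is a domain) enters in an essential way.
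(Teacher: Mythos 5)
Your proof is correct. The paper itself offers no proof of this proposition — it is cited verbatim as Theorem IV.2.15 of Kendig, so there is no internal argument to compare against. Your reduction to an irreducible component of maximal dimension is sound (the case $\mc{W}=\emptyset$ is vacuous), and the chain-prepending argument cleanly yields $\dim\mc{V}\geq\dim\mc{W}_0+1$. As you correctly flag, all the content is loaded into the chain (Krull) characterization of dimension and its agreement with the transcendence-degree definition for finitely generated domains over $\CC$; granting that foundational equivalence, the argument is complete, and the place where irreducibility of $\mc{V}$ is essential is exactly as you identify — it makes $\CC[\mc{V}]$ a domain so that the top of the chain is a genuine prime.
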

Thus, if $\mc{V}$ is an irreducible variety of dimension 1 and $\mc{W}$ is any variety in $\CC^n$, then $\mc{W}\cap\mc{V}$ is either $\mc{V}$ or a finite set.

\begin{proposition}[{\cite[Cor. 9.5.4]{CoxEtAl}}]\label{Fact:TwoVarsExist} Given an ideal $\mc{I}\sbse\CC[\XX]$, the dimension of $Z(\mc{I})$ is equal to the largest integer $r$ for which there exist $r$ distinct elements $i_1,\dots,i_r\in\{1,\dots,n\}$ such that
        \[ \mc{I}\cap \CC[X_{i_1},\dots,X_{i_r}]=\{0\}. \]
\end{proposition}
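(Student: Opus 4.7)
The plan is to prove a double inequality between $r^* := \max\{r : \exists\, i_1,\dots,i_r \in \{1,\dots,n\} \text{ with } \mc{I}\cap\CC[X_{i_1},\dots,X_{i_r}] = \{0\}\}$ and $\dim Z(\mc{I})$.

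For $r^* \leq \dim Z(\mc{I})$, I would first reduce to radical ideals: if $\mc{I}\cap\CC[X_{i_1},\dots,X_{i_r}] = \{0\}$ and $p \in \sqrt{\mc{I}} \cap \CC[X_{i_1},\dots,X_{i_r}]$, then $p^k \in \mc{I} \cap \CC[X_{i_1},\dots,X_{i_r}] = \{0\}$ for some $k$, so (the polynomial ring being a domain) $p=0$. Next, writing $\sqrt{\mc{I}} = \mc{P}_1 \cap \cdots \cap \mc{P}_m$ as its prime factorization, I would argue by contradiction that some $\mc{P}_j \cap \CC[X_{i_1},\dots,X_{i_r}] = \{0\}$: otherwise, choosing nonzero $p_j \in \mc{P}_j \cap \CC[X_{i_1},\dots,X_{i_r}]$, the product $p_1\cdots p_m$ would be a nonzero element of $\sqrt{\mc{I}} \cap \CC[X_{i_1},\dots,X_{i_r}]$. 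For such a $\mc{P}_j$, the polynomial ring in $r$ variables embeds into the coordinate ring $\CC[\XX]/\mc{P}_j$ of the irreducible variety $Z(\mc{P}_j) \sbse Z(\mc{I})$; by the standard equality of variety dimension and transcendence degree of the function field, $r \leq \dim Z(\mc{P}_j) \leq \dim Z(\mc{I})$.

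For the reverse inequality, set $d = \dim Z(\mc{I})$ and pick an irreducible component $\mc{V}$ of $Z(\mc{I})$ with $\dim \mc{V} = d$; let $\mc{P}$ be its defining prime, which contains $\mc{I}$. The fraction field $\CC(\mc{V}) = \mathrm{Frac}(\CC[\XX]/\mc{P})$ has transcendence degree $d$ over $\CC$ and is generated as a field over $\CC$ by the images $\xi_1,\dots,\xi_n$ of $X_1,\dots,X_n$. Here I would invoke the exchange lemma for transcendence bases, namely that any finite generating set of a finitely generated field extension contains a transcendence basis, to extract a subset $\xi_{i_1},\dots,\xi_{i_d}$ that is algebraically independent over $\CC$. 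Algebraic independence is precisely the statement $\mc{P} \cap \CC[X_{i_1},\dots,X_{i_d}] = \{0\}$, and since $\mc{I} \sbse \mc{P}$, also $\mc{I} \cap \CC[X_{i_1},\dots,X_{i_d}] = \{0\}$.

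The main obstacle is less a matter of clever argument than one of bookkeeping: one must reliably invoke the dictionary between geometric and algebraic dimension, namely that $\dim\mc{V} = \mathrm{trdeg}_\CC \CC(\mc{V})$ for an irreducible variety $\mc{V}$ and that $\dim Z(\mc{I}) = \max_j \dim Z(\mc{P}_j)$ where $\mc{P}_j$ are the prime factors of $\sqrt{\mc{I}}$. Both facts are standard and can be taken from \cite{Kendig}, and once they are on the table each direction of the equivalence reduces to a short algebraic argument.
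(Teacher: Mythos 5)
The paper itself does not prove this proposition; it simply cites it as \cite[Cor.\ 9.5.4]{CoxEtAl}, so there is no internal argument to compare against. Your proof is correct. Both inequalities are established cleanly: the reduction from $\mc{I}$ to $\sqrt{\mc{I}}$ (using that a polynomial ring over a field is a domain) and then to a single prime factor (via the product trick) is valid, the embedding $\CC[X_{i_1},\dots,X_{i_r}] \hookrightarrow \CC[\XX]/\mc{P}_j$ is precisely the statement that the images of $X_{i_1},\dots,X_{i_r}$ are algebraically independent, and the converse direction correctly invokes the fact that any field-generating set contains a transcendence basis. The two dictionary facts you lean on, $\dim\mc{V}=\mathrm{trdeg}_\CC\CC(\mc{V})$ for irreducible $\mc{V}$ and $\dim Z(\mc{I})=\max_j\dim Z(\mc{P}_j)$, are indeed standard and appear in \cite{Kendig}.

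It is worth noting that your route differs from the one taken in the cited source: Cox, Little, and O'Shea develop dimension theory in that chapter primarily through affine Hilbert functions and Gr\"obner-basis/monomial-ideal techniques, and Corollary 9.5.4 emerges from that combinatorial machinery as the characterization of ``independent variables modulo $\mc{I}$.'' Your argument instead runs through the classical transcendence-degree definition of dimension together with the exchange lemma, which is more in the spirit of \cite{Kendig}. The Gr\"obner-basis approach has the advantage of being effective (one can compute $r^*$ from a Gr\"obner basis), whereas yours is shorter and more conceptually transparent if one already accepts the transcendence-degree characterization of dimension. Both are valid; since the paper only needs the statement as a black box, either justification would serve.
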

In particular, if $\dim Z(\mc{I})=1$, then $\CC[X_i,X_j]\cap\mc{I}$ contains a non-zero element whenever $i\neq j$.

\begin{proposition}[{\cite[Thm. 5.3.6]{CoxEtAl}}] Given an ideal $\mc{I}\sbse\CC[\XX]$, the variety $Z(\mc{I})$ is a finite set if and only if $\CC[\XX]/\mc{I}$ has finite linear dimension.
\end{proposition}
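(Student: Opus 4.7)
The plan is to prove the two directions separately, using Hilbert's Nullstellensatz for the harder direction and a straightforward linear-algebra pigeonhole argument for the easier one.

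For the implication that $Z(\mc{I})$ finite implies $\dim_\CC \CC[\XX]/\mc{I}<\infty$, suppose $Z(\mc{I})=\{z^{(1)},\dots,z^{(N)}\}$. For each coordinate index $j\in\{1,\dots,n\}$, form the univariate polynomial
\[ q_j(X_j)=\prod_{k=1}^N (X_j - z^{(k)}_j). \]
By construction $q_j$ vanishes on $Z(\mc{I})$, so $q_j\in I(Z(\mc{I}))=\sqrt{\mc{I}}$ by Hilbert's Nullstellensatz. Hence there is a positive integer $m_j$ with $q_j^{m_j}\in\mc{I}$. This gives a monic polynomial in $X_j$ alone of degree $M_j=Nm_j$ lying in $\mc{I}$, so in the quotient $\CC[\XX]/\mc{I}$ the class of $X_j^{M_j}$ is a $\CC$-linear combination of classes of lower powers of $X_j$. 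By iterating, every monomial $\XX^\gra$ is equivalent mod $\mc{I}$ to one with $\gra_j<M_j$ for each $j$. Thus $\CC[\XX]/\mc{I}$ is spanned by the finitely many monomials $\XX^\gra$ with $\gra\in\prod_{j=1}^n\{0,1,\dots,M_j-1\}$.

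For the converse, assume $\dim_\CC \CC[\XX]/\mc{I}=d<\infty$. Then for each $j$ the $d+1$ classes $1, X_j, X_j^2,\dots, X_j^{d}$ are linearly dependent in $\CC[\XX]/\mc{I}$, so there exists a non-zero univariate polynomial $f_j(X_j)\in\mc{I}$. Then
\[ Z(\mc{I})\sbse \bigcap_{j=1}^n Z(f_j), \]
and each factor $Z(f_j)$ restricts the $j$-th coordinate to the finite zero set of $f_j$. Hence $Z(\mc{I})$ is contained in a finite Cartesian product of finite sets and is therefore finite.

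The main substantive input is Hilbert's Nullstellensatz, which supplies the passage from the set-theoretic vanishing of $q_j$ on $Z(\mc{I})$ to the algebraic membership $q_j^{m_j}\in\mc{I}$; once this is in hand, the remainder is elementary linear algebra. Everything else is bookkeeping about monomials with bounded multi-degree spanning the quotient, and about a Cartesian product of finite sets being finite.
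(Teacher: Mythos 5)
The paper states this proposition as a citation to Cox--Little--O'Shea and does not prove it, so there is no in-paper argument to compare against. Your proof is correct. For the forward direction, the reduction via the Nullstellensatz to obtain, for each $j$, a monic univariate $q_j^{m_j}\in\mc{I}$ in $X_j$ alone, and hence a spanning set of monomials of bounded multi-degree for the quotient, is sound (with the understanding that if $Z(\mc{I})=\emptyset$ then the empty product gives $1\in\sqrt{\mc{I}}$, so $\mc{I}=\CC[\XX]$ and the quotient is zero). For the converse, the pigeonhole on $1,X_j,\dots,X_j^d$ in a $d$-dimensional quotient to extract a nonzero univariate $f_j\in\mc{I}$, followed by the observation that $Z(\mc{I})\sbse\prod_j Z(f_j)$ is a finite Cartesian product, is exactly right. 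It is worth noting that the cited source's standard proof routes through monomial orderings and the leading-term ideal (the characterization that a power of each $X_j$ lies in $\mathrm{LT}(\mc{I})$), whereas your argument avoids Gr\"obner-basis machinery entirely by invoking the Nullstellensatz directly; this is more elementary for a reader who already has the Nullstellensatz in hand, at the cost of not yielding the effective degree bounds one gets from a Gr\"obner basis.
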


\noindent A corollary of this result is the following.

\begin{corollary}\label{Intro:FctrsLem}
  Suppose $\mc{I}_1,\dots,\mc{I}_m$ are the prime factors of a radical ideal, $m>1$, and $\dim Z(\mc{I}_i)=1$ for $i=1,\dots,m$. Then $\CC[\XX]/\mc{J}$ has finite linear dimension, where $\mc{J}=\sum_{i=1}^m \bigcap_{j\neq i}\mc{I}_j$.
\end{corollary}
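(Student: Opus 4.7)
The plan is to show that the variety $Z(\mc{J})$ is finite and then invoke the immediately preceding proposition of Cox et al., which states that $\CC[\XX]/\mc{J}$ is finite dimensional if and only if $Z(\mc{J})$ is a finite set. So the work reduces to a set-theoretic computation involving the varieties $\mc{V}_i=Z(\mc{I}_i)$, $i=1,\dots,m$.

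Taking $Z$ converts sums of ideals to intersections of varieties and intersections of ideals to unions of varieties, so
\[ Z(\mc{J}) = Z\!\left(\sum_{i=1}^m \bigcap_{j\neq i}\mc{I}_j\right) = \bigcap_{i=1}^m Z\!\left(\bigcap_{j\neq i}\mc{I}_j\right) = \bigcap_{i=1}^m \bigcup_{j\neq i}\mc{V}_j. \]
The key observation is then a simple combinatorial reduction: a point $x$ lies in $\bigcap_{i=1}^m\bigcup_{j\neq i}\mc{V}_j$ if and only if, for every index $i$, some $\mc{V}_j$ with $j\neq i$ contains $x$. Letting $I(x)=\{k:x\in\mc{V}_k\}$, this requirement is that $I(x)\setminus\{i\}\neq\emptyset$ for every $i\in\{1,\dots,m\}$, equivalently $|I(x)|\geq 2$. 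Consequently
\[ Z(\mc{J}) = \bigcup_{1\le i<j\le m}\bigl(\mc{V}_i\cap\mc{V}_j\bigr). \]

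To finish I need each pairwise intersection $\mc{V}_i\cap\mc{V}_j$ (with $i\neq j$) to be finite. Because the $\mc{I}_i$ are distinct prime ideals, the Nullstellensatz gives $\mc{I}_i=I(\mc{V}_i)$, so the $\mc{V}_i$ are distinct irreducible varieties, each of dimension $1$ by hypothesis. The remark following Proposition \ref{Fact:ProperSubvar} then applies: $\mc{V}_i\cap\mc{V}_j$ is either $\mc{V}_i$ or a finite set, and the first possibility would force $\mc{V}_i\subseteq\mc{V}_j$ and hence, by irreducibility and equal dimensions, $\mc{V}_i=\mc{V}_j$, contradicting distinctness. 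Thus each $\mc{V}_i\cap\mc{V}_j$ is finite, $Z(\mc{J})$ is a finite union of finite sets, and the cited proposition of Cox et al. completes the proof.

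No step looks like a genuine obstacle; the only subtlety is the combinatorial identity in the second paragraph, which must be stated carefully to see why the hypothesis $m>1$ is used (for $m=1$ the indexed union over $j\neq i$ is empty and the formula collapses).
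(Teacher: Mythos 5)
Your proof is correct and follows essentially the same route as the paper: compute $Z(\mc{J})$ as $\bigcap_i\bigcup_{j\neq i}Z(\mc{I}_j)$, reduce via the combinatorial identity to the union of pairwise intersections $Z(\mc{I}_i)\cap Z(\mc{I}_j)$, argue these are finite (the paper cites irredundancy; you use distinctness of the dimension-1 irreducible components plus Proposition \ref{Fact:ProperSubvar}, which amounts to the same thing here), and invoke the finiteness criterion from Cox et al. The only difference is that you spell out the combinatorial step and the finiteness of the pairwise intersections in more detail than the paper does.
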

\begin{proof}
  We observe that
  \[ Z(\mc{J})=\bigcap_{i=1}^m\bigcup_{j\neq i}Z(\mc{I}_j)=\bigcup_{\substack{1\leq i,j\leq m \\ i\neq j}}Z(\mc{I}_i)\cap Z(\mc{I}_j). \]
  Because the collection $\mc{I}_1,\dots,\mc{I}_m$ is irredundant, $Z(\mc{I}_i)\cap Z(\mc{I}_j)$ is a finite set when $i\neq j$, and thus $Z(\mc{J})$ is a finite set.
\end{proof}

 Before concluding this section, we make the following observation about distinguished varieties.

\begin{proposition}
    Suppose $\mc{V}$ is a distinguished variety in $\CC^n$ and each irreducible component of $\mc{V}$ meets $\cc{\DD^n}$. Then $\mc{V}$ has dimension at most $1$.
\end{proposition}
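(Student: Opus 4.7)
The plan is to argue by contradiction: suppose some irreducible component $\mc{V}_0$ of $\mc{V}$ has complex dimension $d\ge 2$, and derive a contradiction with the standard fact that a positive-dimensional irreducible affine algebraic variety in $\CC^n$ is necessarily unbounded. The strategy is to slice $\mc{V}_0$ by a coordinate hyperplane $\{z_1=p_1\}$ chosen to pass through a point $p\in\mc{V}_0\cap\cc{\DD^n}$ provided by the hypothesis, and use the distinguished condition to force the resulting positive-dimensional subvariety into a bounded set.

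First I would fix $p=(p_1,\dots,p_n)\in\mc{V}_0\cap\cc{\DD^n}$ and note that, because $\mc{V}_0\sbse\DD^n\cup\TT^n\cup\EE^n$ and $\EE^n\cap\cc{\DD^n}=\emptyset$, we have $p\in\DD^n\cup\TT^n$; equivalently, either $|p_i|<1$ for every $i$ or $|p_i|=1$ for every $i$. Next I would check that $z_1-p_1$ does not vanish identically on $\mc{V}_0$. The key observation is that membership in each piece $\DD^n$, $\TT^n$, $\EE^n$ can be detected from a single coordinate: if $|z_1|<1$ at a point of $\mc{V}_0$, the point must lie in $\DD^n$ (so \emph{all} $|z_j|<1$), and likewise if $|z_1|=1$ it must lie in $\TT^n$. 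Thus if $z_1\equiv p_1$ on $\mc{V}_0$, then $\mc{V}_0\sbse\DD^n$ (when $|p_1|<1$) or $\mc{V}_0\sbse\TT^n$ (when $|p_1|=1$); either outcome is bounded, contradicting the unboundedness of positive-dimensional irreducible affine varieties.

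With $z_1-p_1$ nonzero on $\mc{V}_0$, I would then apply Proposition \ref{Fact:CodimIneq} to $\mc{V}_0$ and the hyperplane $\mc{W}=\{z\in\CC^n:z_1=p_1\}$, which meet at $p$, to obtain $\dim(\mc{V}_0\cap\mc{W})\ge d+(n-1)-n=d-1\ge 1$. Picking any irreducible component $\mc{C}$ of $\mc{V}_0\cap\mc{W}$ with $\dim\mc{C}\ge 1$, we have $|z_1|\equiv|p_1|$ on $\mc{C}\sbse\mc{V}_0$, so the same trichotomy as above forces $\mc{C}\sbse\DD^n$ if $|p_1|<1$ and $\mc{C}\sbse\TT^n$ if $|p_1|=1$. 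In either case $\mc{C}$ is a positive-dimensional irreducible affine variety contained in a bounded subset of $\CC^n$, a contradiction. The only substantive step is the nonvanishing verification for $z_1-p_1$; everything else is elementary dimension counting combined with the trichotomy built into the definition of a distinguished variety.
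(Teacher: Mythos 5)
Your proof is correct and takes essentially the same approach as the paper's: slice $\mathcal{V}_0$ by a coordinate hyperplane through a chosen point of $\overline{\DD^n}$, use the distinguished condition to force the slice into the bounded set $\overline{\DD^n}$ (hence dimension zero, since positive-dimensional affine varieties are unbounded), and invoke Proposition \ref{Fact:CodimIneq}. The only differences are cosmetic: you phrase it as a contradiction and add an unnecessary case split on whether $z_1-p_1$ vanishes identically (the codimension-inequality branch already covers that situation), while the paper runs the same argument directly.
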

\begin{proof}
    It suffices to consider the case where $\mc{V}$ is irreducible. We fix $z\in \mc{V}\cap\cc{\DD^n}$ and set $H=\{w\in\CC^n:w_n=z_n\}$. Because $\mc{V}$ is distinguished, $H\cap\mc{V}\sbse \cc{\DD^n}$. Since this is a bounded and therefore compact variety, it follows from \cite[Prop. I.3.1]{Chirka} that $H\cap\mc{V}$ is a finite set. By Proposition \ref{Fact:CodimIneq},
    \[ \dim\mc{V}+\dim H \leq n+\dim(H\cap\mc{V}). \]
    As $\dim(H\cap\mc{V})=0$ and $\dim H=n-1$, we have that $\dim\mc{V}\leq 1$.
\end{proof}

\section{Basic Properties of $\Ann(\VV)$}\label{Sec:CharAnn}

Throughout this section, assume that $\VV$ is a completely non-unitary $n$-tuple of commuting isometries. The main result is Theorem \ref{CharAnn:MainThm}, which allows us to recover $\Ann(\VV)$ from a given ideal $\mc{I}\sbse\Ann(\VV)$ with $\dim Z(\mc{I})=1$.

\begin{lemma}\label{CharAnn:NZDim}
    No component of $Z(\Ann(\VV))$ has dimension $0$.
\end{lemma}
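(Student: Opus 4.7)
\medskip

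\noindent\textbf{Proof proposal.} I would argue by contradiction. Suppose some irreducible component of $Z(\Ann(\VV))$ has dimension $0$, i.e., is a single point $z=(z_1,\dots,z_n)\in\CC^n$. Since $\Ann(\VV)$ is radical, it is the irredundant intersection of its prime factors $\mc{I}_1,\dots,\mc{I}_m$, and after relabeling I may assume $Z(\mc{I}_1)=\{z\}$. By the Nullstellensatz applied to the prime (hence radical) ideal $\mc{I}_1$, this forces
\[
\mc{I}_1=(X_1-z_1,\dots,X_n-z_n).
\]

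By the irredundancy of the decomposition, there exists a polynomial $p\in\bigcap_{j\neq 1}\mc{I}_j$ with $p\notin\Ann(\VV)$, so $p(\VV)\neq 0$. Set $\mc{M}=\cc{\ran p(\VV)}$; this is a nonzero $\VV$-invariant subspace because $p(\VV)$ commutes with each $V_i$. For any $q\in\mc{I}_1$ we have $qp\in\mc{I}_1\cap\bigcap_{j\neq 1}\mc{I}_j=\Ann(\VV)$, so $q(\VV)\,p(\VV)=0$, which shows that $q(\VV)$ vanishes on $\mc{M}$.

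Specializing $q$ to the generators $X_i-z_i$ of $\mc{I}_1$ yields $V_i|\mc{M}=z_i I_{\mc{M}}$ for every $i=1,\dots,n$. Since each $V_i$ is an isometry and $\mc{M}\neq\{0\}$, taking norms gives $|z_i|=1$. Consequently $V_i^*|\mc{M}=\cc{z_i}I_{\mc{M}}$ (because $V_i^*(z_i h)=V_i^*V_i h=h$ for $h\in\mc{M}$), so $\mc{M}$ is in fact $\VV$-reducing and each $V_i|\mc{M}$ is unitary. This contradicts the standing hypothesis that $\VV$ is completely non-unitary.

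The only step requiring any real care is the identification of the prime factor corresponding to the $0$-dimensional component as the maximal ideal $(X_1-z_1,\dots,X_n-z_n)$, which is an immediate application of the Nullstellensatz; everything else is a direct manipulation of the annihilator together with the elementary fact that an isometry which is a scalar multiple of the identity on an invariant subspace is unimodular and unitary there.
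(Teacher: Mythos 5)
Your proposal is correct and follows essentially the same line as the paper's proof: identify the $0$-dimensional prime factor as a maximal ideal via the Nullstellensatz, produce a nonzero invariant subspace annihilated by that maximal ideal (the paper uses the closure of $(\bigcap_{j\neq 1}\mc{I}_j)(\VV)\Hil$, you use $\cc{\ran p(\VV)}$ for a single $p$ in that intersection — both work), and conclude that $\VV$ acts there as an $n$-tuple of unimodular scalars, contradicting complete non-unitarity. You spell out the unimodularity and reducing-subspace step that the paper leaves implicit, which is a reasonable addition.
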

\begin{proof}
  Suppose $\Ann(\VV)\neq \{0\}$, and let $\mc{I}_1,\dots,\mc{I}_m$ be the prime factors of $\Ann(\VV)$. We note that $\mc{I}_2\cap\dots\cap\mc{I}_m$ is strictly larger than $\Ann(\VV)$ and therefore it cannot annihilate $\VV$. Suppose that $Z(\mc{I}_1)$ has dimension 0, and thus is generated by $X_1-\grl_1,\dots,X_n-\grl_n$ for some $\grl_1,\dots,\grl_n\in\CC$. Let $\Hil_1$ be the closure of $(\mc{I}_2\cap\dots\cap\mc{I}_m)(\VV)\cdot\Hil$ and note that $\Hil_1\neq \{0\}$. Since $\VV|\Hil_1$ is annihilated by $\mc{I}_1$, we have
  \[ \VV|\Hil_1 = (\grl_1,\dots,\grl_n). \]
  But this would mean that $\VV$ is not completely non-unitary.
\end{proof}

\begin{lemma}\label{CharAnn:Prime}
  If $\mc{I}\sbse \Ann(\VV)$ is a prime ideal and $Z(\mc{I})$ has dimension $1$, then $\Ann(\VV)=\mc{I}$.
\end{lemma}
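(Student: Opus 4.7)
The plan is to argue by contradiction using dimension considerations together with the previous lemma. Assume toward contradiction that $\mc{I}\subsetneq \Ann(\VV)$. Since $\mc{I}$ is prime, $Z(\mc{I})$ is irreducible, and by hypothesis it has dimension $1$. The goal is to deduce that $Z(\Ann(\VV))$ must be a proper subvariety of $Z(\mc{I})$, which by Proposition \ref{Fact:ProperSubvar} would force $\dim Z(\Ann(\VV))<1$, and then to obtain a contradiction with Lemma \ref{CharAnn:NZDim}.

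The first step is to upgrade the strict ideal containment $\mc{I}\subsetneq\Ann(\VV)$ to a strict varietal containment $Z(\Ann(\VV))\subsetneq Z(\mc{I})$. The inclusion $Z(\Ann(\VV))\subseteq Z(\mc{I})$ is automatic. If equality held, then applying the ideal-of-a-variety operator to both sides and invoking the Nullstellensatz — along with the fact that $\mc{I}$ is radical (being prime) and that $\Ann(\VV)$ is radical (as noted in Section 2) — would yield $\mc{I}=\Ann(\VV)$, contradicting our assumption. Hence the containment is strict.

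Next, since $Z(\Ann(\VV))$ is a proper subvariety of the irreducible $1$-dimensional variety $Z(\mc{I})$, Proposition \ref{Fact:ProperSubvar} gives $\dim Z(\Ann(\VV))\leq 0$. On the other hand, $\Ann(\VV)$ is a proper ideal (since $\VV$ acts on a non-zero Hilbert space, so $1\notin\Ann(\VV)$), so by the Nullstellensatz $Z(\Ann(\VV))$ is non-empty. Thus $Z(\Ann(\VV))$ has at least one irreducible component, and every such component must be $0$-dimensional. This directly contradicts Lemma \ref{CharAnn:NZDim}, completing the proof.

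The proof is almost entirely a packaging of results already established, so I would not expect a serious obstacle; the only point that requires a little care is the translation of strict ideal containment into strict variety containment, and this is handled cleanly by the radicality of $\Ann(\VV)$ combined with the Nullstellensatz. Everything else is dimension bookkeeping.
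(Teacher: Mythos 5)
Your proof is correct and follows essentially the same route as the paper's: both rely on the containment $Z(\Ann(\VV))\sbse Z(\mc{I})$, Lemma \ref{CharAnn:NZDim} to rule out zero-dimensional components, Proposition \ref{Fact:ProperSubvar} for the dimension drop on proper subvarieties, and the radicality of both ideals plus the Nullstellensatz to pass between ideals and varieties. The only difference is cosmetic — you phrase it as a proof by contradiction while the paper argues directly that $Z(\Ann(\VV))=Z(\mc{I})$ — so there is nothing substantive to add.
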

\begin{proof}
  We observe that $Z(\Ann(\VV))\sbse Z(\mc{I})$ and that the latter is an irreducible variety of dimension 1. Since $\VV$ is completely non-unitary, it follows from Lemma \ref{CharAnn:NZDim} that every irreducible component of $Z(\Ann(\VV))$ must have dimension 1. By Proposition \ref{Fact:ProperSubvar}, $\Ann(\VV)=\mc{I}$.
\end{proof}

Given $p\in\CC[\XX]$, set $\grd(p)=(d_1,\dots,d_n)$, where $d_i$ is the degree of $X_i$ in $p$. We define $\rho:\CC[\XX]\to\CC[\XX]$ by setting
    \[ \rho(p)(z)=z^{\grd(p)}\cc{p(1/\cc{z})}, \quad z\in (\CC\bksl\{0\})^n, \]
    where $1/\cc{z}=(1/\cc{z}_1,\dots,1/\cc{z}_n)$. If we write $p=\sum_{\gra\geq 0}c_\gra \XX^\gra$, then
    \[ \rho(p)=\sum_{\gra\geq 0}\cc{c_\gra}\XX^{\grd(p)-\gra}. \]
    Thus, when $\WW$ is an $n$-tuple of commuting isometries and $p$ a polynomial, we have
      \[ \rho(p)(\WW)=p(\WW)^*\WW^{\grd(p)},  \]
    and therefore $\rho(\Ann(\WW))\sbse \Ann(\WW)$. When $\mc{I}$ is an ideal in $\CC[\XX]$ with the property that $\rho(\mc{I})\sbse \mc{I}$, then $Z(\mc{I})$ has a certain reflection property. Specifically, if $z\in Z(\mc{I})$ and no coordinate of $z$ is 0, then $1/\cc{z}\in Z(\mc{I})$. A final remark concerning $\rho$ is the following lemma, which is based on \cite[Lem. 2.2]{AKM}.

\begin{lemma}\label{CharAnn:Ortho}
    Let $\WW$ be an $n$-tuple of commuting isometries on a Hilbert space $\Hil$, and let $\mc{I},\mc{J}$ be ideals in $\CC[\XX]$ such that $\rho(\mc{I})\sbse \mc{I}$ and $\mc{I}\cdot\mc{J}\sbse \Ann(\WW)$. Then $\cc{\mc{I}(\WW)\Hil}$ and $\cc{\mc{J}(\WW)\Hil}$ are orthogonal subspaces of $\Hil$.
\end{lemma}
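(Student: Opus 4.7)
The plan is to reduce the orthogonality statement to an operator identity on the minimal unitary extension $\wtil{\WW}$, where the power of $\WW$ appearing in the definition of $\rho$ becomes invertible. Fix $p\in\mc{I}$, $q\in\mc{J}$, and $h,k\in\Hil$. Because $\Hil$ is $\wtil{\WW}$-invariant, $p(\wtil{\WW})h=p(\WW)h$ and $q(\wtil{\WW})k=q(\WW)k$ both lie in $\Hil$, so
\[ \langle p(\WW)h, q(\WW)k\rangle = \langle q(\wtil{\WW})^* p(\wtil{\WW})h, k\rangle. \]
Hence it suffices to establish the operator identity $p(\wtil{\WW})^* q(\wtil{\WW}) = 0$ on $\wtil{\Hil}$.

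To do this, I combine the two hypotheses. Because $\rho(\mc{I})\sbse \mc{I}$, the polynomial $\rho(p)$ lies in $\mc{I}$, so $\rho(p)\cdot q\in\mc{I}\cdot\mc{J}\sbse\Ann(\WW)$, giving $(\rho(p)q)(\WW)=0$. The norm identity $\|r(\WW)\|=\|r(\wtil{\WW})\|$ recorded just after Ito's theorem then promotes this to $(\rho(p)q)(\wtil{\WW})=0$ as an operator on all of $\wtil{\Hil}$. Using the formula $\rho(p)(\VV)=p(\VV)^*\VV^{\grd(p)}$ noted just before the lemma, applied to the commuting isometric tuple $\wtil{\WW}$, together with the commutativity of polynomials in $\wtil{\WW}$,
\[ 0 = p(\wtil{\WW})^*\wtil{\WW}^{\grd(p)}q(\wtil{\WW}) = \bigl(p(\wtil{\WW})^* q(\wtil{\WW})\bigr)\wtil{\WW}^{\grd(p)}. \]
Because each $\wtil{V}_i$ is unitary, $\wtil{\WW}^{\grd(p)}$ is a unitary operator, so cancelling it on the right yields $p(\wtil{\WW})^* q(\wtil{\WW}) = 0$, as required.

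The only real subtlety, and the reason the detour through $\wtil{\WW}$ is needed, is that the same calculation performed directly on $\Hil$ produces only $p(\WW)^* q(\WW)\WW^{\grd(p)} = 0$; since $\WW^{\grd(p)}$ is a proper isometry with non-dense range in general, one cannot cancel it to conclude orthogonality of $\cc{p(\WW)\Hil}$ and $\cc{q(\WW)\Hil}$. The norm-preservation fact for the minimal unitary extension is exactly what allows this isometry to be replaced by a unitary, after which the algebraic identity falls out.
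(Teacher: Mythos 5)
Your proof is correct, and it genuinely differs from the paper's: you pass to the minimal unitary extension $\wtil{\WW}$, use the norm-preservation fact to lift $(\rho(p)\cdot q)(\WW)=0$ to $(\rho(p)\cdot q)(\wtil{\WW})=0$, and then cancel the unitary $\wtil{\WW}^{\grd(p)}$ on the right. The paper stays entirely on $\Hil$ and avoids the extension: it inserts the isometry identity $\WW^{*\grd(p)}\WW^{\grd(p)}=I$ on the \emph{left}, giving $p(\WW)^*q(\WW)=\WW^{*\grd(p)}\,p(\WW)^*\WW^{\grd(p)}\,q(\WW)=\WW^{*\grd(p)}(\rho(p)\cdot q)(\WW)=0$. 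This works because $\WW^{*\grd(p)}\rho(p)(\WW)=\sum_{\gra}\cc{c_\gra}\WW^{*\grd(p)}\WW^{\grd(p)-\gra}=\sum_\gra\cc{c_\gra}\WW^{*\gra}=p(\WW)^*$, using only that $\WW^{\grd(p)-\gra}$ is an isometry. So your closing remark that the ``detour through $\wtil{\WW}$ is needed'' is not accurate: it is only needed if one insists on inserting the identity on the right of $q(\WW)$, which is the arrangement you tried; the paper's left-handed insertion cancels cleanly without dense range. Your route is a bit longer but has the virtue of making the role of $\rho$ transparent, since after extension $\rho(p)(\wtil{\WW})$ is literally $p(\wtil{\WW})^*$ times a unitary; the paper's route is more economical and avoids invoking Ito's theorem.
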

\begin{proof}
    Let $p\in\mc{I}$ and $q\in\mc{J}$. Then
    \[ p(\WW)^*q(\WW)=\WW^{*\grd(p)}p(\WW)^*\WW^{\grd(p)}q(\WW)=\WW^{*\grd(p)}(\rho(p)\cdot q)(\WW)=0. \]
    Thus $\inner{p(\WW)h}{q(\WW)h'}=0$ for each $p\in\mc{I}$, $q\in\mc{J}$, and $h,h'\in\Hil$.
\end{proof}

\begin{theorem}\label{CharAnn:MainThm}
    Let $\mc{I}\sbse \Ann(\VV)$ be an ideal such that $\dim Z(\mc{I})=1$.
    \begin{enumerate}
        \item[\rm{(1)}] $\mc{I}=\Ann(\VV)$ if and only if $\mc{I}$ is radical and $\bigcap_{i\neq j}\mc{I}_j\nsubseteq \Ann(\VV)$ for every $j\in\{1,\dots,m\}$, where $\mc{I}_1,\dots,\mc{I}_m$ are the prime factors of $\mc{I}$.
        \item[\rm{(2)}] If $\mc{I}=\Ann(\VV)$, then there exist non-zero, mutually orthogonal $\VV$-invariant subspaces $\Hil_1,\dots,\Hil_m$ of $\Hil$ such that $\mc{I}_j=\Ann(\VV|\Hil_j)$ for each $j\in\{1,\dots,m\}$.
    \end{enumerate}
\end{theorem}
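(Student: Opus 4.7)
The plan is to prove both parts simultaneously by constructing the $\Hil_j$ explicitly. Set $\mc{J}_j:=\bigcap_{i\neq j}\mc{I}_i$ for each $j$, and take as candidate
\[ \Hil_j := \cc{\mc{J}_j(\VV)\Hil}, \]
which is automatically $\VV$-invariant. The ``only if'' direction of (1) is essentially bookkeeping: $\Ann(\VV)$ is radical (noted after the Ito extension), and the irredundancy of the prime decomposition gives $\mc{J}_j\supsetneq\mc{I}=\Ann(\VV)$ for each $j$.

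For the remaining parts I would work under the hypotheses of the ``if'' direction of (1) and aim first at the key identification
\[ \Ann(\VV|\Hil_j)=\mc{I}_j. \]
The inclusion $\mc{I}_j\sbse\Ann(\VV|\Hil_j)$ is immediate from $\mc{I}_j\cdot\mc{J}_j\sbse\bigcap_i\mc{I}_i=\mc{I}\sbse\Ann(\VV)$, which forces $p(\VV)$ to vanish on $\mc{J}_j(\VV)\Hil$, and hence on $\Hil_j$, whenever $p\in\mc{I}_j$. For the reverse inclusion I would invoke Lemma \ref{CharAnn:Prime}, which requires that $\Hil_j\neq\{0\}$ and $\dim Z(\mc{I}_j)=1$. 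Non-triviality of $\Hil_j$ is exactly the content of the hypothesis $\mc{J}_j\nsubseteq\Ann(\VV)$. For the dimension, $Z(\mc{I}_j)$ is an irreducible component of $Z(\mc{I})$, so of dimension at most $1$; the case $\dim Z(\mc{I}_j)=0$ would force $Z(\Ann(\VV|\Hil_j))\sbse Z(\mc{I}_j)$ to be a finite set, so $\Ann(\VV|\Hil_j)$ would either equal $\CC[\XX]$ (forcing $\Hil_j=\{0\}$) or have a $0$-dimensional component, contradicting Lemma \ref{CharAnn:NZDim} applied to the completely non-unitary tuple $\VV|\Hil_j$. This dimension check, combined with Lemma \ref{CharAnn:Prime}, is the main technical point of the proof.

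Given the identification $\Ann(\VV|\Hil_j)=\mc{I}_j$, the remaining deductions are short. For the reverse inclusion in (1), any $p\in\Ann(\VV)$ restricts to zero on every $\Hil_j$ and thus lies in every $\mc{I}_j$, so $p\in\bigcap_j\mc{I}_j=\mc{I}$. For the orthogonality in (2), the crucial observation is that $\mc{I}_j=\Ann(\VV|\Hil_j)$ is $\rho$-invariant, because annihilators of commuting isometry tuples always are (from $\rho(p)(\WW)=p(\WW)^*\WW^{\grd(p)}$, so $p(\WW)=0$ forces $\rho(p)(\WW)=0$); consequently each $\mc{J}_j$, being an intersection of $\rho$-invariant ideals, is $\rho$-invariant as well. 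For $i\neq j$ we have $\mc{J}_i\cdot\mc{J}_j\sbse\mc{J}_i\cap\mc{J}_j=\bigcap_k\mc{I}_k=\mc{I}\sbse\Ann(\VV)$, so Lemma \ref{CharAnn:Ortho} applied with ideals $\mc{J}_i$ and $\mc{J}_j$ yields $\Hil_i\perp\Hil_j$, completing the proof.
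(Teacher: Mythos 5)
Your proof is correct and follows essentially the same route as the paper: construct $\Hil_j = \cc{\mc{J}_j(\VV)\Hil}$, use Lemma \ref{CharAnn:Prime} to identify $\Ann(\VV|\Hil_j) = \mc{I}_j$, deduce $\Ann(\VV)\sbse\bigcap_j\mc{I}_j=\mc{I}$ for the converse of (1), and invoke Lemma \ref{CharAnn:Ortho} with the $\rho$-invariant ideals $\mc{J}_i,\mc{J}_j$ for the orthogonality in (2). Your explicit verification that $\dim Z(\mc{I}_j)=1$ for each $j$ (ruling out $0$-dimensional prime factors via Lemma \ref{CharAnn:NZDim} applied to the completely non-unitary $\VV|\Hil_j$) is a hypothesis-check the paper leaves implicit when it applies Lemma \ref{CharAnn:Prime}; it is a welcome clarification rather than a deviation in method.
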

\begin{proof}
    (1) If $\mc{I}=\Ann(\VV)$, then $\mc{I}$ is radical and so $\Ann(\VV)\subsetneq\bigcap_{i\neq j}\mc{I}_i$ for $j=1,\dots,m$.
    
    Conversely, suppose $\mc{I}$ is a radical ideal and that $\what{\mc{I}}_j=\bigcap_{i\neq j}\mc{I}_i$ does not annihilate $\VV$ for any $j\in\{1,\dots,n\}$. We define $\Hil_j$ to be the closure of $\what{\mc{I}}_j(\VV)\Hil$; this is non-zero and $\mc{I}_j\sbse \Ann(\VV|\Hil_j)$. By Lemma \ref{CharAnn:Prime}, it follows that $\mc{I}_j=\Ann(\VV|\Hil_j)$ and so
    \[ \Ann(\VV)\sbse \Ann(\VV|\Hil_j)=\mc{I}_j. \]
    Since this holds for every $j$, we have $\Ann(\VV)=\mc{I}$.
    
    (2) Given $i\in\{1,\dots,m\}$, we note that $\mc{I}_i=\Ann(\VV|\Hil_i)$ implies $\rho(\mc{I}_i)\sbse \mc{I}_i$, and therefore $\rho(\what{\mc{I}}_j)\sbse \what{\mc{I}}_j$ for each $j$. Since $\what{\mc{I}}_i\cdot\what{\mc{I}}_j\sbse \Ann(\VV)$ when $j\neq i$, the theorem now follows from Lemma \ref{CharAnn:Ortho}.
\end{proof}

Suppose $\mc{I}\sbse \Ann(\VV)$ is an ideal determining a variety of dimension 1. In order to show that $\Ann(\VV)$ can be recovered from $\mc{I}$, we first observe that $\sqrt{\mc{I}}\sbse \sqrt{\Ann(\VV)}=\Ann(\VV)$. Let $\mc{I}_1,\dots,\mc{I}_m$ be the prime factors of $\sqrt{\mc{I}}$ and let $J\sbse \{1,\dots,n\}$ be a subset of minimal cardinality for which $\bigcap_{j\in J}\mc{I}_j\sbse \Ann(\VV)$. Then $\bigcap_{j\in J}\mc{I}_j$ is radical with prime factors $\{\mc{I}_j:j\in J\}$, and $Z(\bigcap_{j\in J}\mc{I}_j)$ has dimension 1. Theorem \ref{CharAnn:MainThm} now asserts that $\Ann(\VV)=\bigcap_{j\in J}\mc{I}_j$.


\section{The structure of $\VV$ when $\Ann(\VV)$ is prime}

The purpose of this section is to establish Theorem \ref{PrmIdl:MainThm}, which is Theorem \ref{Decomp:MainThm} in the special case that $\Ann(\VV)$ is a prime ideal of $\CC[\XX]$. When $Z(\Ann(\VV))$ has dimension 1, $\Ann(\VV)$ contains a irreducible element of $\CC[X_i,X_n]$ for $i=1,\dots,n-1$. This fact allows us to use results for pairs of commuting isometries to extract information in the $n$-variable case.

We say that $p\in\CC[X_1,X_2]$ \textit{has no single variable factors} if there is no non-constant $q\in \CC[X_1]\cup \CC[X_2]$ so that $q$ divides $p$.

\begin{lemma}\label{PrmIdl:NoSngVar}
    Let $\VV$ be a pair of commuting isometries on a Hilbert space $\Hil$ and suppose that $p\in\CC[X_1,X_2]\bksl\{0\}$ satisfies $p(\VV)=0$. If $p$ has no single variable factors, then
    \begin{equation}\label{PrmIdl:Eq1}
      \bigcap_{j=0}^\infty V_1^j\Hil=\bigcap_{j=0}^\infty V_2^j\Hil.
    \end{equation}
    Thus the space $\mc{K}=\bigcap_{j=0}^\infty V_1^j\Hil$ is $\VV$-reducing, $\VV|\mc{K}$ is a pair of unitary operators, and $\VV|\mc{K}^\bot$ is a pair of shifts.
\end{lemma}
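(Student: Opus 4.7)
The plan is to establish $\mc{K}_1 := \bigcap_{j\geq 0} V_1^j\Hil \sbse \mc{K}_2 := \bigcap_{j\geq 0} V_2^j\Hil$; the reverse inclusion will then follow by swapping $V_1$ and $V_2$, since the hypothesis on $p$ is symmetric in the two variables. Once $\mc{K}_1 = \mc{K}_2 = \mc{K}$ is in hand, the remaining assertions (that $\mc{K}$ is $\VV$-reducing, that $\VV|\mc{K}$ is a pair of unitaries, and that $\VV|\mc{K}^\bot$ is a pair of shifts) follow directly from the Wold decompositions of $V_1$ and of $V_2$ applied separately.

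First I would observe that $V_2\mc{K}_1\sbse \mc{K}_1$: if $h\in\mc{K}_1$ and $h=V_1^j h_j$ for each $j$, then $V_2h=V_1^j V_2 h_j\in V_1^j\Hil$ for every $j$. By the Wold decomposition for $V_1$, the space $\mc{K}_1$ is $V_1$-reducing with $V_1|\mc{K}_1$ unitary. Applying the Suciu decomposition to the commuting pair $(V_1|\mc{K}_1,V_2|\mc{K}_1)$, the summands in which the first coordinate is a shift must be absent, leaving a decomposition
\[
(V_1|\mc{K}_1,\,V_2|\mc{K}_1) = (U_1,T_2)\oplus(W_1,W_2),
\]
with $U_1,W_1,W_2$ unitary and $T_2$ a shift. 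The desired containment $\mc{K}_1\sbse\mc{K}_2$ is equivalent to the vanishing of the $(U_1,T_2)$ summand, which is where the polynomial relation must be used.

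The core of the argument is the following claim: \emph{if $(U,T)$ is a commuting pair on a Hilbert space $\mc{M}$ with $U$ unitary, $T$ a shift, and $p(U,T)=0$, then $\mc{M}=\{0\}$.} To prove this, write $p(X_1,X_2)=\sum_{j=0}^d q_j(X_1)X_2^j$ with $q_d\ne 0$. Since $p$ has no single variable factors, the $\gcd$ of $q_0,\dots,q_d$ in $\CC[X_1]$ equals $1$; any non-constant common factor would divide $p$ in $\CC[X_1]$ alone. Let $\mc{E}=\ker T^*$ be the wandering subspace of $T$; since $U$ commutes with $T$ it also commutes with $T^*$, and so preserves $\mc{E}$. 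Fix $e\in\mc{E}$. Applying $T^{*k}$ to $p(U,T)e=0$ in succession for $k=d,d-1,\dots,0$ and using $T^*T=I$, $T^*e=0$, and the commutation of $U$ with $T$ and $T^*$, a short induction produces $q_k(U)e=0$ for every $k\in\{0,\dots,d\}$. By Bezout in $\CC[X_1]$ there are $r_0,\dots,r_d\in\CC[X_1]$ with $\sum_j r_j q_j=1$, so $e=\sum_j r_j(U)q_j(U)e=0$. Hence $\mc{E}=\{0\}$, which forces $T$ to be surjective and hence unitary; but $T$ is also a shift, so $\mc{M}=\{0\}$.

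The main obstacle is the inductive peeling step that extracts $q_k(U)e=0$ one index at a time; once it is observed that $U$ commutes with $T^*$ and that the hypothesis on $p$ is precisely the coprimality of its $X_1$-coefficients, the rest reduces cleanly to Suciu's proposition and Wold's theorem.
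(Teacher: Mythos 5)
Your argument is correct and takes a genuinely different, somewhat more economical route than the paper's. The paper applies Suciu's decomposition to $(V_1,V_2)$ on all of $\Hil$, producing four summands, and must then invoke the Bercovici--Douglas--Foias characteristic pair machinery (Proposition \ref{Fact:BDF}) to show that on the pure summand $(S_1,S_2)$ both operators are shifts. You instead restrict at the outset to $\mc{K}_1=\bigcap_{j}V_1^j\Hil$, which is $V_1$-reducing by Wold and $V_2$-invariant by commutativity; since $V_1|\mc{K}_1$ is unitary, the pure and $(T_1,U_2)$ summands of Suciu's decomposition of $(V_1|\mc{K}_1,V_2|\mc{K}_1)$ vanish automatically, so the BDF argument is never needed. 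The remaining step---killing the $(U_1,T_2)$ summand by peeling the $X_1$-coefficients of $p$ with powers of $T_2^*$ (using that a unitary $U$ commuting with $T$ also commutes with $T^*$) and then invoking coprimality---is essentially the same as the paper's treatment of the mixed summands, though you package it through a Bezout identity rather than via the minimal polynomial of $U_1$. Once $\mc{K}_1\sbse\mc{K}_2$ is established, the reverse inclusion follows by the evident symmetry of the hypothesis, and the final assertions follow from the Wold decompositions of $V_1$ and $V_2$ applied separately. The trade-off: your route is simpler and self-contained modulo Wold and Suciu, while the paper's route proves the slightly stronger intermediate fact that the pure summand of $(V_1,V_2)$ is a pair of shifts, which is not needed for the lemma as stated.
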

\begin{proof}
    As seen before,
    \[ (V_1,V_2)=(S_1,S_2)\oplus (U_1,T_2)\oplus (T_1,U_2)\oplus (R_1,R_2) \]
    where $(S_1,S_2)$ is pure, $U_1,U_2,R_1,R_2$ are unitary, and $T_1,T_2$ are shifts. Let $\Hil_0,\Hil_1,\Hil_2$, and $\Hil_{1,2}$ denote the corresponding reducing subspaces. We observe that both $(S_1,S_2)$ and $(S_2,S_1)$ have the property that there is no reducing subspace for the pair on which the first component is unitary. Thus there is a characteristic pair associated to each. 
    
    Let $(V,A)$ be the characteristic pair associated with $(S_1,S_2)$. Preserving the notation used in Proposition \ref{Fact:BDF}, we set $(W_1,W_2)=\WW_A$ and note that $p(\WW_A)=0$ implies
    \[ 0=p(\WW_A)(0,d_0,0,0,\dots)=(0,p_0(\wtil{V})d_0,p_1(\wtil{V})d_0,\dots) \]
    where $p(X_1,X_2)=\sum_{j=0}^N p_j(X_1)X_2^j$ and $d_0\in\mc{D}$ is arbitrary. Fix $j_0\in\{0,\dots,N\}$ so that $p_{j_0}\neq 0$. Since $V$ is a shift, $p_{j_0}(\wtil{V})$ is one-to-one and therefore $d_0=0$. Thus $\mc{D}=\{0\}$, and so $W_1=V$ and $W_2=A$. In other words, $W_1$, and hence $S_1$, is a shift. Applying the same argument to the characteristic pair for $(S_2,S_1)$ shows that $S_2$ is a shift.
    
   We also have $\sum_{j=0}^N p_j(U_1)T_2^j=0$. Since $T_2$ is a shift commuting with $U_1$, it follows that $p_j(U_1)=0$ for $j=0,1,\dots,N$. Indeed, let $f\in\ker T_2^*$ and note that $U_1$ commutes with $T_2^*$. Thus $p_N(U_1)f=T^{*N}_2\left(\sum_{j=0}^N p_j(U_1)T_2^j\right)f=0$, and so $p_N(U_1)T^j_2f=0$ for every $j\in\NN_0$. Since $\ker T_2^*$ is a cyclic set for $T_2$, it follows that $p_N(U_1)=0$. By the obvious induction argument, we see that $p_0(U_1)=\dots=p_{N-1}(U_1)=0$. Thus the spectrum of $U_1$ is finite, and if $q_1$ is the minimal annihilating polynomial of $U_1$, then $q_1$ divides $p_0,p_1,\dots,p_N$. However, $p$ has no single variable factors, and therefore $q_1$ is constant. This can only occur if the spectrum of $U_1$ is empty, and thus $\Hil_1=\{0\}$. In a similar way, we deduce that $\Hil_2=\{0\}$.
   
   We proceed to prove \eqref{PrmIdl:Eq1}. Recall that $\bigcap_{j=0}^\infty V_1^j\Hil$ is the largest $V_1$-reducing subspace on which $V_1$ is unitary. Thus, $\Hil_{1,2}$ is contained in $\bigcap_{j=0}^\infty V_1^j\Hil$. We also note that $\left(\bigcap_{j=0}^\infty V_1^j\Hil\right)^\bot$ is the largest $V_1$-reducing subspace on which $V_1$ is a shift. Thus $\Hil_{1,2}=\bigcap_{j=0}^\infty V_1^j\Hil$. In a similar manner we find that $\Hil_{1,2}=\bigcap_{j=0}^\infty V_2^j\Hil$.
\end{proof}

We remark that the `no single variable factors' hypothesis is only used to eliminate the mixed summands $(U_1,T_2)$ and $(T_1,U_2)$. In the case where $p$ does have single variable factors, then the middle two summands may remain, but in this case $U_1$ and $U_2$ will have finite spectra.

When $\VV$ is an $n$-tuple such that the pairs $(V_1,V_n),$ $\dots,$ $(V_{n-1},V_n)$ are algebraic, the preceding lemma implies the following. 

\begin{corollary}\label{PrmIdl:NSVCor}
  Let $\VV$ be an $n$-tuple of commuting isometries on a Hilbert space $\Hil$, and suppose there are polynomials $p_1(X_1,X_n),\dots,p_{n-1}(X_{n-1},X_n)\in\Ann(\VV)$, each of which has no single variable factors. Then $\mc{K}=\bigcap_{j=0}^\infty V_n^j\Hil$ is a $\VV$-reducing subspace of $\Hil$ such that $\VV|\mc{K}^\bot$ is an $n$-tuple of shifts and $\VV|\mc{K}$ is an $n$-tuple of unitaries.
\end{corollary}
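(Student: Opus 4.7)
The plan is to apply Lemma \ref{PrmIdl:NoSngVar} separately to each of the pairs $(V_i,V_n)$ for $i=1,\dots,n-1$ and then collect the conclusions. Since $p_i(X_i,X_n) \in \Ann(\VV)$ annihilates $(V_i,V_n)$ and has no single variable factors, Lemma \ref{PrmIdl:NoSngVar} applies directly to each such pair.

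First I would observe that, for each $i \in \{1,\dots,n-1\}$, the lemma gives
\[ \bigcap_{j=0}^\infty V_i^j \Hil \;=\; \bigcap_{j=0}^\infty V_n^j \Hil \;=\; \mc{K}, \]
so in particular $\mc{K}$ is reducing for the pair $(V_i,V_n)$, and hence $V_i|\mc{K}$ and $V_n|\mc{K}$ are unitary while $V_i|\mc{K}^\bot$ and $V_n|\mc{K}^\bot$ are shifts. As $i$ ranges over $\{1,\dots,n-1\}$, this shows that $\mc{K}$ is reducing for every $V_i$, i.e., $\mc{K}$ is $\VV$-reducing. The identity $\mc{K} = \bigcap_j V_n^j \Hil$ matches the expression in the statement.

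Next I would note that since $V_i|\mc{K}$ is unitary for $i=1,\dots,n-1$ and $V_n|\mc{K}$ is unitary, the restriction $\VV|\mc{K}$ is an $n$-tuple of unitaries. Similarly, since $V_i|\mc{K}^\bot$ is a shift for each such $i$ and $V_n|\mc{K}^\bot$ is also a shift, $\VV|\mc{K}^\bot$ is an $n$-tuple of shifts. This completes the argument.

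There is no real obstacle here beyond making sure that the same subspace $\mc{K}$ arises from every pair $(V_i,V_n)$; this is exactly the content of equation \eqref{PrmIdl:Eq1} in the lemma, which ensures the compatibility across all choices of $i$. Thus the corollary reduces to $n-1$ applications of Lemma \ref{PrmIdl:NoSngVar} together with this compatibility observation.
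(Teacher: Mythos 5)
Your proof is correct and follows exactly the route the paper intends: the corollary is stated as a direct consequence of Lemma \ref{PrmIdl:NoSngVar}, and applying that lemma to each pair $(V_i,V_n)$ together with equation \eqref{PrmIdl:Eq1} (which pins down the same subspace $\mc{K}=\bigcap_j V_n^j\Hil$ for every $i$) is precisely the intended argument.
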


We recall from \cite{AKM} that a polynomial $p\in\CC[X_1,X_2]$ is \textit{inner toral} if $\{z\in\CC^2:p(z_1,z_2)=0\}$ is a distinguished variety in $\CC^2$. When a given ideal in $\CC[\XX]$ has a ``enough'' inner toral two-variable polynomials, we can conclude that $Z(\mc{I})$ is a distinguished variety. More precisely, we have the following.

\begin{lemma}\label{PrmIdl:Toral}
    Let $\mc{I}$ be an ideal in $\CC[\XX]$, and suppose that there is an inner toral polynomial $p_j(X_j,X_n)\in\mc{I}$ for each $j\in\{1,\dots,n-1\}$. Then $Z(\mc{I})$ is a distinguished variety.
\end{lemma}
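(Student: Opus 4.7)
The plan is to verify the distinguished-variety containment pointwise, exploiting the fact that the component $z_n$ (the variable shared by all of $p_1,\dots,p_{n-1}$) dictates the regime.

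Fix an arbitrary $z=(z_1,\dots,z_n)\in Z(\mc{I})$. For each $j\in\{1,\dots,n-1\}$, since $p_j\in\mc{I}$ we have $p_j(z_j,z_n)=0$, so the point $(z_j,z_n)$ lies in the distinguished variety $Z(p_j)\sbse \DD^2\cup\TT^2\cup \EE^2$. Because these three sets are disjoint, exactly one of $|z_j|,|z_n|<1$, or $|z_j|=|z_n|=1$, or $|z_j|,|z_n|>1$ holds. The value of $|z_n|$ therefore forces the regime of $|z_j|$.

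Split into three cases. If $|z_n|<1$, then for each $j\in\{1,\dots,n-1\}$ the pair $(z_j,z_n)$ cannot lie in $\TT^2$ or $\EE^2$, hence $|z_j|<1$, and $z\in\DD^n$. If $|z_n|=1$, then necessarily $|z_j|=1$ for all $j<n$, giving $z\in\TT^n$. If $|z_n|>1$, analogously $|z_j|>1$ for each $j<n$, so $z\in\EE^n$. In all cases, $z\in\DD^n\cup\TT^n\cup\EE^n$, which proves that $Z(\mc{I})$ is a distinguished variety.

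There is no real obstacle here: the only subtlety is recognizing that having one common coordinate $X_n$ appearing in every $p_j$ is what synchronizes the regimes across all coordinates. If instead the pairs were indexed in an arbitrary way (say $p_j\in\CC[X_{i_j},X_{k_j}]\cap\mc{I}$ with no shared variable), the argument would not go through; the shared variable $X_n$ is the crucial hypothesis.
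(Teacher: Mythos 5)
Your proof is correct and follows essentially the same approach as the paper: fix a point of $Z(\mc{I})$, note that for each $j$ the pair $(z_j,z_n)$ lies in the distinguished variety $Z(p_j)\sbse\DD^2\cup\TT^2\cup\EE^2$, and use the shared coordinate $z_n$ to synchronize the regime $\DD$, $\TT$, or $\EE$ across all coordinates. The paper's only cosmetic difference is that it first replaces $\mc{I}$ by the smaller ideal $\mc{J}$ generated by the $p_j$'s before fixing a point, which adds nothing essential.
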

\begin{proof}
    Let $\mc{J}$ denote the ideal generated by $p_1(X_1,X_n),\dots,p_{n-1}(X_{n-1},X_n)$ and note that $Z(\mc{I})\sbse Z(\mc{J})$. Thus, it suffices to prove that $Z(\mc{J})$ is distinguished. Fix $w\in Z(\mc{J})$ and note that we have $w_n\in K$ for some $K\in\{\DD,\TT,\EE\}$. As $p_1(w_1,w_n)=0,\dots, p_{n-1}(w_{n-1},w_n)=0$ and $p_1,\dots,p_{n-1}$ are inner toral, it follows that each coordinate of $w$ is in $K$.
\end{proof}

\begin{theorem}\label{PrmIdl:MainThm}
    Let $\VV$ be a completely non-unitary $n$-tuple of commuting isometries. If $\Ann(\VV)$ is a prime ideal and $Z(\Ann(\VV))$ has dimension $1$, then there exists an $s\in\{1,\dots,n\}$ such that, after a permutation of coordinates, 
    \begin{enumerate}
        \item[\rm{(1)}] $\VV=(V_1,\dots,V_s,\grl_{s+1}I,\dots,\grl_n I)$ where $V_1,\dots,V_s$ are shifts and $\grl_{s+1},\dots,\grl_n$ are scalars of absolute value $1$; and
        \item[\rm{(2)}] $Z(\Ann(\VV))=\mc{W}\times\{(\grl_{s+1},\dots,\grl_n)\}$ for some $1$-dimensional distinguished variety $\mc{W}\sbse \CC^s$.
    \end{enumerate}
\end{theorem}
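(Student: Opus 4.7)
The plan is to split the indices $\{1,\dots,n\}$ into a ``scalar part'' and a ``shift part'', and then recover the distinguished variety from two-variable sections of $\Ann(\VV)$.

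First, since $\dim Z(\Ann(\VV))=1$, Proposition \ref{Fact:TwoVarsExist} gives $\Ann(\VV)\cap\CC[X_i,X_j]\neq\{0\}$ for every pair $i\neq j$. Because $\Ann(\VV)$ is prime, each ideal $\Ann(\VV)\cap\CC[X_i,X_j]$ is a non-zero prime of the UFD $\CC[X_i,X_j]$, and therefore contains an irreducible polynomial $q_{ij}$. Let $S=\{i:V_i\text{ is not a scalar multiple of the identity}\}$; if $i\notin S$, then $V_i=\grl_i I$ is an isometry, so $|\grl_i|=1$ and $X_i-\grl_i\in\Ann(\VV)$. The case $S=\emptyset$ is excluded, because then $Z(\Ann(\VV))$ would be the single point $(\grl_1,\dots,\grl_n)$, contradicting the dimension hypothesis. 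After permuting coordinates, assume $S=\{1,\dots,s\}$ with $s\geq 1$.

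For distinct $i,j\in S$, the irreducible polynomial $q_{ij}$ cannot be a single-variable polynomial---otherwise $V_i$ or $V_j$ would be a scalar---so $q_{ij}$ has no single variable factors. When $s\geq 2$, the polynomials $q_{1,s},\dots,q_{s-1,s}$ therefore satisfy the hypotheses of Corollary \ref{PrmIdl:NSVCor} applied to the $s$-tuple $(V_1,\dots,V_s)$, whence $\mc{K}=\bigcap_{k\geq 0}V_s^k\Hil$ is $(V_1,\dots,V_s)$-reducing with $(V_1,\dots,V_s)|\mc{K}$ an $s$-tuple of unitaries. Since $V_j=\grl_j I$ is unitary for $j>s$, $\mc{K}$ is actually $\VV$-reducing and $\VV|\mc{K}$ is an $n$-tuple of unitaries, so complete non-unitarity forces $\mc{K}=\{0\}$. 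The case $s=1$ is immediate: $\bigcap_k V_1^k\Hil$ is already $\VV$-reducing because every $V_j$ with $j>1$ is a scalar, and it would carry an $n$-tuple of unitaries. Hence $V_1,\dots,V_s$ are shifts, establishing (1).

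From $X_j-\grl_j\in\Ann(\VV)$ for $j>s$ one has $Z(\Ann(\VV))=\mc{W}\times\{(\grl_{s+1},\dots,\grl_n)\}$ for an irreducible $1$-dimensional variety $\mc{W}\sbse\CC^s$. If $s=1$ then $\mc{W}=\CC$ is trivially distinguished. For $s\geq 2$, Lemma \ref{PrmIdl:Toral} applied to the ideal of $\mc{W}$ in $\CC[X_1,\dots,X_s]$ reduces the task to verifying that $q_{j,s}$ is inner toral for each $j\in\{1,\dots,s-1\}$. This last step is the main obstacle: an irreducible polynomial in two variables, without single variable factors, that annihilates a pair of commuting shifts must cut out a distinguished variety in $\CC^2$. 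This is exactly the two-variable case of the statement being proved, and may be invoked directly from \cite{AKM}. A concrete argument realizes $(V_j,V_s)$ as $(T_\grJ,T_{\grz I})$ on $H^2(\DD)\otimes\mc{E}$ by Proposition \ref{Fact:OpValFunc}(1); the relation $q_{j,s}(\grJ(\grz),\grz I_\mc{E})=0$ for all $\grz\in\DD$ forces $\sigma(\grJ(\grz))\sbse\{\mu:q_{j,s}(\mu,\grz)=0\}$, and this spectral information, combined with the reflection identity $\rho(q_{j,s})=c\,q_{j,s}$ (which follows from $\rho(\Ann(\VV))\sbse\Ann(\VV)$ together with the fact that $\Ann(\VV)\cap\CC[X_j,X_s]$ is the principal ideal $(q_{j,s})$), yields $Z(q_{j,s})\sbse\DD^2\cup\TT^2\cup\EE^2$.
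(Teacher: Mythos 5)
Your approach matches the paper's essentially step by step: separate the scalar coordinates from the rest using primeness of $\Ann(\VV)$, apply Corollary \ref{PrmIdl:NSVCor} (via two-variable generators with no single-variable factors) to see the remaining coordinates are shifts, and invoke the two-variable Agler--Knese--McCarthy result to get that those generators are inner toral, which Lemma \ref{PrmIdl:Toral} then converts into a distinguished variety. The one place to be careful is your concrete aside at the end: the inclusion $\grs(\grJ(z))\sbse\{\mu:q_{j,s}(\mu,z)=0\}$ only constrains eigenvalues of $\grJ(z)$, whereas distinguishedness of $Z(q_{j,s})$ requires controlling \emph{every} zero of $q_{j,s}(\cdot,z)$, not just those that happen to be eigenvalues; since you anyway fall back on citing \cite{AKM} directly for this step (exactly as the paper does, via \cite[Thm.~1.20]{AKM}), the main argument is unaffected.
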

\begin{proof}
  For each $i\in\{1,\dots,n\}$, let $\mc{Q}_i$ be the set of irreducible polynomials in $\CC[X_i]\cap\Ann(\VV)$. After some permutation of coordinates, there is an $s\in\{0,1,\dots,n\}$ so that $\mc{Q}_i=\emptyset$ if $i\leq s$ while $\mc{Q}_i$ is non-empty if $i>s$. In fact, when $i>s$, there is a $\grl_i\in\CC$ such that $X_i-\grl_i\in\mc{Q}_i$, and thus $V_i=\grl_i I$.
  
  As $\VV$ is completely non-unitary, $s>0$. In the case that $s=1$, we have $\VV=(V_1,\grl_2 I,\dots,\grl_n I)$, and so $V_1$ is a shift and $Z(\Ann(\VV))$ is $\CC\times\{(\grl_2,\dots,\grl_n)\}$.
  
  Now we suppose $s>1$. For each $i<s$, it follows from Proposition \ref{Fact:TwoVarsExist} that there is a non-zero $p_i(X_i,X_s)\in \Ann(\VV)$. Since $\Ann(\VV)$ is prime, we may and shall assume that $p_i$ is irreducible. Since $\mc{Q}_i=\mc{Q}_s=\emptyset$, $p_i$ is non-constant in both variables. Corollary \ref{PrmIdl:NSVCor} now implies that $V_1,\dots,V_s$ are shifts.
  
   We easily see that $Z(\Ann(\VV))=\mc{W}\times\{(\grl_{s+1},\dots,\grl_n)\}$, where
  \[ \mc{W}=\{z\in\CC^s: p(z)=0\text{ for every }p\in \Ann(\VV)\cap\CC[X_1,\dots,X_s]\}. \]
  It follows from \cite[Thm. 1.20]{AKM} that, for each $i<s$, there is a inner toral polynomial $q_i(X_i,X_s)\in\Ann(\VV)$ which divides any element of $\CC[X_i,X_s]\cap\Ann(\VV)$. As $p_i$ is irreducible, it follows that $q_i$ is proportional to $p_i$ and thus that $p_i$ is inner toral. Lemma \ref{PrmIdl:Toral} now implies that $\Ann(\VV)\cap\CC[X_1,\dots,X_s]$ determines a distinguished variety in $\CC^s$.
\end{proof}

\section{A Decomposition Theorem}\label{Sec:Decomp}

We can now describe the structure of a completely non-unitary $n$-tuple of commuting isometries when $Z(\Ann(\VV))$ has dimension 1. To accomplish this, we first decompose $\VV$ into a direct sum based on the individual eigenvalues of $V_1,\dots,V_n$. This decomposition is easily derived, but we provide the details for completeness.

\begin{lemma}\label{Decomp:EgnLem1}
  Let $(V_1,V_2)$ be a pair of commuting isometries on a Hilbert space $\Hil$. Given an eigenvector $\grl$ of $V_1$, let $\mc{E}_\grl$ denote the corresponding eigenspace. Then
  \begin{enumerate}
    \item[\rm{(1)}] $\mc{E}_\grl$ is a $V_2$-reducing subspace; and
    \item[\rm{(2)}] $\mc{E}_\grl\bot\mc{E}_\eta$ if $\grl$ and $\eta$ are distinct eigenvalues of $V_1$.
  \end{enumerate}
\end{lemma}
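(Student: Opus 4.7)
The plan rests on the observation that every eigenvalue $\grl$ of the isometry $V_1$ is unimodular and that the corresponding eigenspace admits two matching descriptions:
\[ \mc{E}_\grl=\ker(V_1-\grl I)=\ker(V_1^*-\cc{\grl}I). \]
Unimodularity $|\grl|=1$ is immediate from $\|x\|=\|V_1x\|=|\grl|\,\|x\|$ for any non-zero eigenvector $x$. The inclusion $\ker(V_1-\grl I)\sbse\ker(V_1^*-\cc{\grl}I)$ follows from $V_1^*V_1=I$: if $V_1x=\grl x$, then $x=V_1^*V_1x=\grl V_1^*x$. For the reverse inclusion, I would use that $V_1V_1^*$ is the orthogonal projection onto $\ran V_1$, so that $\|V_1^*w\|^2+\|(I-V_1V_1^*)w\|^2=\|w\|^2$; then $V_1^*w=\cc{\grl}w$ with $|\grl|=1$ forces $w\in\ran V_1$, and $w=V_1V_1^*w=\cc{\grl}V_1w$ yields $V_1w=\grl w$.

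With this equivalence in hand, assertion (1) follows by pairing the commutation $V_1V_2=V_2V_1$ with its adjoint $V_1^*V_2^*=V_2^*V_1^*$. The first identity shows that $V_2$ carries $\ker(V_1-\grl I)$ into itself, while the second shows that $V_2^*$ preserves $\ker(V_1^*-\cc{\grl}I)$. Since both kernels coincide with $\mc{E}_\grl$, this subspace is invariant under both $V_2$ and $V_2^*$, hence $V_2$-reducing.

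For (2), the orthogonality of eigenspaces for distinct eigenvalues is a routine computation: for $x\in\mc{E}_\grl$ and $y\in\mc{E}_\eta$,
\[ \grl\inner{x}{y}=\inner{V_1x}{y}=\inner{x}{V_1^*y}=\eta\inner{x}{y}, \]
the last step using $V_1^*y=\cc{\eta}y$ from the kernel equivalence; since $\grl\neq\eta$, the inner product vanishes.

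The one step deserving care is the kernel equivalence, because the commutation of $V_1$ with $V_2$ does not in general force the commutation of $V_1$ with $V_2^*$. The reason the argument goes through is that on $\mc{E}_\grl$ the isometry $V_1$ acts as the unimodular scalar $\grl$, so it behaves there like a unitary, and moving the adjoint across $V_2$ presents no obstruction.
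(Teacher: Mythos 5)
Your proof is correct and leads to the same conclusion by a cleaner, more direct route than the paper. The paper first notes $\mc{E}_\grl$ is $V_2$-invariant, then writes $V_1$ and $V_2$ as $2\times 2$ block matrices relative to $\Hil=\mc{E}_\grl\oplus\mc{E}_\grl^\bot$ (with $V_1$ block-diagonal, $V_2$ upper-triangular), and kills the off-diagonal block of $V_2$ by observing that commutativity forces $B(W-\grl I)=0$, hence $(W^*-\cc{\grl}I)B^*=0$, while $W^*$ cannot have $\cc{\grl}$ as an eigenvalue since $W$ does not have $\grl$ as one. That last step is precisely the same unimodular-eigenvalue fact you prove explicitly, namely $\ker(V_1-\grl I)=\ker(V_1^*-\cc{\grl}I)$; the paper just uses it in contrapositive form applied to the restriction $W$, without justification. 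Your version makes that identity the centerpiece and then invokes $V_1V_2=V_2V_1$ and its adjoint $V_1^*V_2^*=V_2^*V_1^*$ to preserve each kernel, avoiding the matrix bookkeeping entirely and also handing you part (2) immediately via $V_1^*y=\cc{\eta}y$. Your closing remark about "moving the adjoint across $V_2$" is slightly off in its framing — the adjoint of $V_1V_2=V_2V_1$ is always $V_2^*V_1^*=V_1^*V_2^*$, no unitarity needed — but the actual argument you give does not rely on this misstatement, so the proof stands.
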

\begin{proof}
  (2) is a triviality, so we only prove (1). Certainly $\mc{E}_\grl$ is $V_2$-invariant. Relative to the decomposition $\Hil=\mc{E}_\grl\oplus \mc{E}_\grl^\bot$, we write
  \[ V_1=\pvect{\grl & 0 \\ 0 & W}, \quad V_2=\pvect{A & B \\ 0 & C}. \]
  Since $\grl$ is an eigenvalue of an isometry, $|\grl|=1$. Note that $W$ does not have $\grl$ as an eigenvalue, and therefore $W^*$ does not have $\cc{\grl}$ as an eigenvalue. Commutativity implies $B(W-\grl)=0$ and so $(W^*-\cc{\grl})B^*=0$. Therefore $B^*=0$ and thus $V_2=A\oplus C$.
\end{proof}

\begin{lemma}\label{Decomp:EgnLem2}
  Let $\VV$ be an $n$-tuple of commuting isometries on a Hilbert space $\Hil$. There exists a subset $\mc{F}\sbse (\{0\}\cup\TT)^n$ and a collection of non-zero pairwise orthogonal $\VV$-reducing subspaces $(\Hil_z)_{z\in\mc{F}}$ such that $\Hil=\bigoplus_{z\in\mc{F}}\Hil_z$, and for every $z\in\mc{F}$ and $i\in\{1,\dots,n\}$
  \begin{enumerate}
    \item[\rm{(1)}] if $z_i=0$, then $V_i|\Hil_z$ has no eigenvalues; else
    \item[\rm{(2)}] if $z_i\neq 0$, then $(V_i-z_i I)|\Hil_z=0$.
  \end{enumerate}
\end{lemma}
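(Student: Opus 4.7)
The plan is to iterate Lemma \ref{Decomp:EgnLem1} across all $n$ coordinates. Although that lemma is stated for pairs, the same argument applies to each pair $(V_i,V_j)$ within the $n$-tuple, so every eigenspace of any $V_i$ is reducing for the whole tuple $\VV$. First I would fix $i\in\{1,\dots,n\}$ and let $\grL_i\sbse\TT$ denote the (at most countable) point spectrum of $V_i$; for each $\grl\in\grL_i$ set $\mc{E}^{(i)}_\grl=\ker(V_i-\grl I)$, and set $\mc{E}^{(i)}_0=\Hil\ominus\bigoplus_{\grl\in\grL_i}\mc{E}^{(i)}_\grl$. By Lemma \ref{Decomp:EgnLem1}, each $\mc{E}^{(i)}_\grl$ is $\VV$-reducing, and hence so is $\mc{E}^{(i)}_0$; by construction $V_i$ has no eigenvalues on $\mc{E}^{(i)}_0$.

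Next, for each $z=(z_1,\dots,z_n)\in(\{0\}\cup\TT)^n$ I would define
\[ \Hil_z=\bigcap_{i=1}^n \mc{E}^{(i)}_{z_i}, \]
with the convention that $\mc{E}^{(i)}_{z_i}=\{0\}$ when $z_i\nin\{0\}\cup\grL_i$, and let $\mc{F}$ be the set of those $z$ for which $\Hil_z\neq\{0\}$. Each $\Hil_z$ is $\VV$-reducing as an intersection of $\VV$-reducing subspaces. Property (2) is immediate since $\Hil_z\sbse\ker(V_i-z_iI)$ when $z_i\neq 0$. For property (1), any eigenvector $v$ of $V_i|\Hil_z$ is also an eigenvector of $V_i$ on $\Hil$, hence lies in some $\mc{E}^{(i)}_\mu$ with $\mu\in\grL_i$; but $v\in\Hil_z\sbse\mc{E}^{(i)}_0$ is orthogonal to each such $\mc{E}^{(i)}_\mu$, forcing $v=0$.

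Pairwise orthogonality of the $\Hil_z$ is automatic: if $z\neq z'$, then some coordinate $i$ satisfies $z_i\neq z'_i$, and by Lemma \ref{Decomp:EgnLem1}(2) together with the definition of $\mc{E}^{(i)}_0$, we have $\mc{E}^{(i)}_{z_i}\bot\mc{E}^{(i)}_{z'_i}$. Finally, I would establish $\Hil=\bigoplus_{z\in\mc{F}}\Hil_z$ by induction on $n$: the base case is the spectral decomposition $\Hil=\mc{E}^{(1)}_0\oplus\bigoplus_{\grl\in\grL_1}\mc{E}^{(1)}_\grl$ used to define $\mc{E}^{(1)}_\grl$, and at the inductive step I would apply the hypothesis to the $(n-1)$-tuple $(V_2,\dots,V_n)$ restricted to each $\VV$-reducing piece $\mc{E}^{(1)}_{z_1}$, noting that eigenspaces of $V_j|\mc{E}^{(1)}_{z_1}$ coincide with $\mc{E}^{(1)}_{z_1}\cap\mc{E}^{(j)}_{z_j}$.

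The essential content has been packaged into Lemma \ref{Decomp:EgnLem1}, so no step here should present real difficulty; the only item requiring some care is the bookkeeping in the last paragraph, to confirm that the iteratively produced decomposition matches the simultaneous description $\Hil_z=\bigcap_i\mc{E}^{(i)}_{z_i}$.
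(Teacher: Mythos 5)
Your proof is correct and follows essentially the same approach as the paper: the spaces $\Hil_z=\bigcap_i\mc{E}^{(i)}_{z_i}$ you construct are exactly the ranges of the commuting projections $P_z=P_{1,z_1}\cdots P_{n,z_n}$ used there. The only real difference is expository: the paper works with the projections directly, verifies explicitly that they commute (a fact your induction quietly relies on when identifying the $z_j=0$ slot of $V_j|\mc{E}^{(1)}_{z_1}$ with $\mc{E}^{(1)}_{z_1}\cap\mc{E}^{(j)}_0$), and then obtains completeness of the decomposition at one stroke by multiplying the resolutions $I=\sum_w P_{i,w}$ over $i$, rather than by induction on $n$.
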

\begin{proof}
  Given $i\in\{1,\dots,n\}$ and $\grl\in\TT$, we denote by $P_{i,\grl}$ the orthogonal projection onto $\ker(V_i-\grl I)$. Then $P_{i,\grl}P_{i,\eta}=0$ when $\grl$ and $\eta$ are distinct elements of $\TT$, and $V_iP_{i,\grl}=\grl P_{i,\grl}$. We define $P_{i,0}$ to be projection onto $\bigcap_\grl \ker(V_i-\grl I)^\bot$. Equivalently, $P_{i,0}=I-\sum_{\grl\in\grs_p(V_1)} P_{i,\grl}$, where the sum converges in the strong operator topology and $\grs_p(V_1)$ denotes the point spectrum of $V_1$.
  
  Lemma \ref{Decomp:EgnLem1}(1) implies that $V_i$ commutes with $P_{j,\grl}$ for every $i,j\in\{1,\dots,n\}$ and $\grl\in\TT$. We also note that $(V_i-\eta I)P_{j,\grl}P_{i,\eta}=0$ and therefore $P_{i,\eta}P_{j,\grl}P_{i,\eta}=P_{j,\grl}P_{i,\eta}$. Thus $P_{j,\grl}$ and $P_{i,\eta}$ commute, and so $P_{i,w_1}$ and $P_{j,w_2}$ commute for $w_1,w_2\in \{0\}\cup\TT$. Given $z\in (\{0\}\cup\TT)^n$, we set $P_z=P_{1,z_1}\cdots P_{n,z_n}$. Then $P_z$ is an orthogonal projection that commutes with $\VV$. Let $\mc{F}$ be the set of all $z\in(\{0\}\cup\TT)^n$ such that $P_z\neq 0$. Since $I=\sum_\grl P_{i,w}$ for each $i$, we have $I=\sum_{z\in\mc{F}}P_z$. If $z,z'\in \mc{F}$ are distinct then $P_z$ is orthogonal to $P_{z'}$, and if $z_i\neq 0$ then $V_iP_z=z_iP_z$.
  
  Finally, suppose $z\in\mc{F}$ and $i\in\{1,\dots,n\}$ are such that $z_i=0$, and suppose that $h\in \ran P_z$ so that $V_ih=\grl h$ for some $\grl\in\TT$. Then $P_{i,\grl}h=h$ and $P_zh=h$, and so $P_zP_{i,\grl}h=h$. However, $P_z=P_zP_{i,0}$ and $P_{i,0}P_{i,\grl}=0$. Thus $h=0$ and therefore $V_i|\ran P_z$ has no eigenvalues.
\end{proof}

Notice that if $\VV=\VV^{(1)}\oplus \VV^{(2)}$, then $Z(\Ann(\VV^{(1)}))\sbse Z(\Ann(\VV))$. Thus, when $Z(\Ann(\VV))$ has dimension 1, each summand of the preceding decomposition does likewise. Furthermore, when $\VV$ is completely non-unitary, each summand is also completely non-unitary. As such, we can restrict our attention to the following case.

\begin{lemma}\label{Decomp:OneAtATime}
  Let $\VV=(V_1,\dots,V_s,\grl_{s+1}I,\dots,\grl_n I)$ be a completely non-unitary $n$-tuple of isometries on a Hilbert space $\Hil$, where $\grl_{s+1},\dots,\grl_n\in\TT$. If $V_1,\dots,V_s$ have no eigenvalues and $Z(\Ann(\VV))$ has dimension $1$, then $V_1,\dots,V_s$ are shifts and there is a distinguished variety $\mc{W}$ in $\CC^s$ of dimension $1$ such that
  \[ Z(\Ann(\VV))=\mc{W}\times\{(\grl_{s+1},\dots,\grl_n)\}. \]
\end{lemma}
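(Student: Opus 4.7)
My plan is to decompose $\VV$ along the prime factors of $\Ann(\VV)$, apply Theorem~\ref{PrmIdl:MainThm} on each piece to describe the structure locally, and then lift back to the whole Hilbert space using Corollary~\ref{PrmIdl:NSVCor}. The trivial cases $\Hil=\{0\}$, $s=0$, or $\Ann(\VV)=\{0\}$ (the last forces $n=s=1$) can be handled by the Wold decomposition of a single isometry together with complete non-unitarity, so assume $\Ann(\VV)\neq\{0\}$ and $s\geq 1$. Let $\mc{I}_1,\dots,\mc{I}_m$ be the prime factors of $\Ann(\VV)$; by Lemma~\ref{CharAnn:NZDim} each $Z(\mc{I}_j)$ has dimension $1$, and Theorem~\ref{CharAnn:MainThm}(2) yields non-zero $\VV$-invariant subspaces $\Hil_j$ with $\Ann(\VV|\Hil_j)=\mc{I}_j$. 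Each restriction $\VV|\Hil_j$ is again completely non-unitary, and for $i\leq s$ the operator $V_i|\Hil_j$ inherits the absence of eigenvalues, while $V_i|\Hil_j=\grl_i I$ for $i>s$.

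Applying Theorem~\ref{PrmIdl:MainThm} to each $\VV|\Hil_j$, we obtain, after some permutation of coordinates, the first $s_j$ components as shifts and the remaining ones as unimodular scalars. This permutation cannot swap the index sets $\{1,\dots,s\}$ and $\{s+1,\dots,n\}$: a component with no eigenvalues is not a scalar on the non-zero space $\Hil_j$, and a scalar on a non-zero space is not a shift. Hence $s_j=s$, the permutation is block-diagonal with respect to this partition, and $V_i|\Hil_j$ is a shift for $i\leq s$. Consequently $Z(\mc{I}_j)=\mc{W}_j\times\{(\grl_{s+1},\dots,\grl_n)\}$ for some $1$-dimensional distinguished $\mc{W}_j\sbse\CC^s$. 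Taking the union, $Z(\Ann(\VV))=\mc{W}\times\{(\grl_{s+1},\dots,\grl_n)\}$ where $\mc{W}=\bigcup_j\mc{W}_j$ is a $1$-dimensional distinguished variety in $\CC^s$, which settles the second conclusion.

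It remains to show $V_1,\dots,V_s$ are shifts on the full space $\Hil$ (nontrivial only when $s\geq 2$). I plan to apply Corollary~\ref{PrmIdl:NSVCor} to the $s$-tuple $(V_1,\dots,V_s)$. For each $i\in\{1,\dots,s-1\}$ and each $j$, the proof of Theorem~\ref{PrmIdl:MainThm} supplies an irreducible polynomial $p_i^{(j)}(X_i,X_s)\in\mc{I}_j$ that is non-constant in both variables (because $V_i|\Hil_j$ and $V_s|\Hil_j$ are shifts and therefore admit no single-variable annihilator). Set $P_i=\prod_{j=1}^m p_i^{(j)}$; since the $j$-th factor lies in $\mc{I}_j$, we get $P_i\in\bigcap_j\mc{I}_j=\Ann(\VV)$. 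Unique factorization in $\CC[X_i,X_s]$ shows that $P_i$ has no single-variable factors: any linear factor $X_i-c$ or $X_s-c$ dividing $P_i$ would be associate to one of the bivariate-irreducible $p_i^{(j)}$, which is impossible. Corollary~\ref{PrmIdl:NSVCor}, applied to $(V_1,\dots,V_s)$ with the polynomials $P_1,\dots,P_{s-1}$, produces a reducing subspace $\mc{K}=\bigcap_{k\geq 0}V_s^k\Hil$ on which each $V_i$ ($i\leq s$) acts as a unitary. Because $V_{s+1},\dots,V_n$ are scalars, $\mc{K}$ is $\VV$-reducing and $\VV|\mc{K}$ is a tuple of unitaries; complete non-unitarity of $\VV$ then forces $\mc{K}=\{0\}$, so $V_1,\dots,V_s$ are shifts on $\Hil$.

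The main obstacle is the construction of $P_i$: since $\Ann(\VV)$ itself need not be prime, Theorem~\ref{PrmIdl:MainThm} cannot be applied directly to $\VV$, and one must assemble the required bivariate polynomials out of the prime-factor irreducibles using unique factorization in $\CC[X_i,X_s]$ to preserve the absence of single-variable factors.
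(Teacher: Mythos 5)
Your proof is correct and follows essentially the same route as the paper: decompose $\Ann(\VV)$ into prime factors, apply Theorem \ref{PrmIdl:MainThm} to the restrictions $\VV|\Hil_j$, and then assemble bivariate polynomials $P_i=\prod_j p_i^{(j)}$ with no single-variable factors to invoke Corollary \ref{PrmIdl:NSVCor} and conclude that $V_1,\dots,V_s$ are shifts on all of $\Hil$. You add some useful explicit detail that the paper leaves implicit -- in particular the observation that the permutation from Theorem \ref{PrmIdl:MainThm} applied to $\VV|\Hil_j$ must respect the partition $\{1,\dots,s\}$ vs.\ $\{s+1,\dots,n\}$ (since eigenvalue-free operators are never scalars on a non-zero space), and the unique-factorization justification that $P_i$ has no single-variable factors -- but the overall structure and key steps coincide with the paper's argument.
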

\begin{proof}
  If $\Ann(\VV)$ is prime, then the conclusion follows from Theorem \ref{PrmIdl:MainThm}. Suppose that $\Ann(\VV)$ has prime factors $\mc{I}_1,\dots,\mc{I}_m$ and $m>1$. For each $i$, set $\what{\mc{I}}_i=\bigcap_{j\neq i}\mc{I}_j$, and let $\Hil_i$ be the closure of $\what{\mc{I}}_i(\VV)\Hil$. Then $\Hil_i\neq \{0\}$ and $\mc{I}_i=\Ann(\VV|\Hil_i)$. By Theorem \ref{PrmIdl:MainThm} we have that $Z(\mc{I}_i)=\mc{W}_i\times\{(\grl_{s+1},\dots,\grl_n)\}$, where $\mc{W}_i$ is a distinguished variety. Thus
  \[ Z(\Ann(\VV))=\bigcup_{i=1}^m Z(\mc{I}_i)=\left(\bigcup_{i=1}^m\mc{W}_i\right)\times\{(\grl_{s+1},\dots,\grl_n)\}. \]
  For $i\in\{1,\dots,m\}$ and $\ell\in\{1,\dots,s-1\}$, we see in the proof of Theorem \ref{PrmIdl:MainThm} that there is an irreducible polynomial $p_\ell^{(i)}(X_\ell,X_s)\in \mc{I}_i$ which is non-constant in both variables. Setting $p_\ell(X_\ell,X_s)=\prod_{i=1}^m p_\ell^{(i)}(X_\ell,X_s)$, we see that $p_\ell(X_\ell,X_s)$ is in $\Ann(\VV)$ and has no single variable factors. By Corollary \ref{PrmIdl:NSVCor}, $V_1,\dots,V_s$ are shifts.
\end{proof}

We now summarize the results of this section in the following.

\begin{theorem}\label{Decomp:MainThm}
  Let $\VV$ be a completely non-unitary $n$-tuple of commuting isometries, and suppose $Z(\Ann(\VV))$ has dimension $1$. There is a subset $\mc{F}\sbse (\{0\}\cup\TT)^n$ and a collection of non-zero $\VV$-reducing subspaces $(\Hil_z)_{z\in\mc{F}}$ such that $\Hil=\bigoplus_{z\in\mc{F}}\Hil_z$, and for each $z\in\mc{F}$ the following hold.
  \begin{enumerate}
    \item[\rm{(1)}] If $i\in\{1,\dots,n\}$ and $z_i=0$, then $V_i|\Hil_z$ is a shift. If $z_i\neq 0$, then $(V_i-z_i I)|\Hil_z=0$.
    \item[\rm{(2)}] Suppose exactly $s$ components of $z$ are 0, and let $\grl_{s+1},\dots,\grl_n$ denote the non-zero components of $z$. After a permutation of coordinates, there is a distinguished variety $\mc{W}\sbse \CC^s$ of pure dimension $1$ so that
    \[ Z(\Ann(\VV|\Hil_z))=\mc{W}\times\{(\grl_{s+1},\dots,\grl_n)\}.  \]
  \end{enumerate}
\end{theorem}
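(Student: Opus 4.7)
The plan is to assemble the theorem directly from the two lemmas already proved in this section. Lemma \ref{Decomp:EgnLem2} provides a decomposition of $\Hil$ indexed by the joint point spectrum of $\VV$, while Lemma \ref{Decomp:OneAtATime} handles the structural analysis of each individual summand. Essentially no new work is required beyond carefully verifying the hypotheses of the second lemma on each piece.

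First I would invoke Lemma \ref{Decomp:EgnLem2} to obtain the index set $\mc{F}\sbse(\{0\}\cup\TT)^n$ and the orthogonal decomposition $\Hil=\bigoplus_{z\in\mc{F}}\Hil_z$ into non-zero $\VV$-reducing subspaces. For each $z\in\mc{F}$ and coordinate $i$, that lemma already yields $(V_i-z_i I)|\Hil_z=0$ when $z_i\neq 0$, and it gives that $V_i|\Hil_z$ has empty point spectrum when $z_i=0$. The remaining piece of (1) is the sharper assertion that in the latter case $V_i|\Hil_z$ is in fact a shift, and this is exactly what Lemma \ref{Decomp:OneAtATime} supplies.

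Next, fixing $z\in\mc{F}$ and permuting coordinates so that the $s$ indices with $z_i=0$ come first, I would verify the hypotheses of Lemma \ref{Decomp:OneAtATime} for $\VV|\Hil_z$. Since $\Hil_z$ is $\VV$-reducing, $\bigcap_{\gra\in\NN_0^n}\VV^\gra\Hil_z\sbse\bigcap_{\gra\in\NN_0^n}\VV^\gra\Hil=\{0\}$, so $\VV|\Hil_z$ is completely non-unitary. The containment $\Ann(\VV)\sbse\Ann(\VV|\Hil_z)$ gives $Z(\Ann(\VV|\Hil_z))\sbse Z(\Ann(\VV))$, so this dimension is at most $1$; but $\Ann(\VV|\Hil_z)$ is non-zero (it contains the non-zero ideal $\Ann(\VV)$, which must be non-zero when $n\geq 2$ since $\dim Z(\Ann(\VV))=1<n$) and Lemma \ref{CharAnn:NZDim} forbids zero-dimensional components, forcing $\dim Z(\Ann(\VV|\Hil_z))=1$. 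Lemma \ref{Decomp:OneAtATime} now yields (1) and the equality $Z(\Ann(\VV|\Hil_z))=\mc{W}\times\{(\grl_{s+1},\dots,\grl_n)\}$ for a one-dimensional distinguished variety $\mc{W}\sbse\CC^s$.

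The one remaining point is that $\mc{W}$ has \emph{pure} dimension $1$. This follows from a second appeal to Lemma \ref{CharAnn:NZDim} applied to $\VV|\Hil_z$: the irreducible components of $Z(\Ann(\VV|\Hil_z))$ are precisely the sets $\mc{W}_j\times\{(\grl_{s+1},\dots,\grl_n)\}$ as $\mc{W}_j$ ranges over the irreducible components of $\mc{W}$, and none of these may have dimension $0$. Since the essential content is carried by Lemmas \ref{Decomp:EgnLem2} and \ref{Decomp:OneAtATime}, there is no real obstacle here; the only thing requiring care is the clean verification that each restriction $\VV|\Hil_z$ is CNU with $\dim Z(\Ann(\VV|\Hil_z))=1$, so that Lemma \ref{Decomp:OneAtATime} can legitimately be invoked.
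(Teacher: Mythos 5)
Your proposal is correct and follows exactly the route the paper intends: Theorem \ref{Decomp:MainThm} is stated as a summary of Lemmas \ref{Decomp:EgnLem2} and \ref{Decomp:OneAtATime}, and the intervening paragraph in the paper sketches precisely the reductions (complete non-unitarity and dimension $1$ for each summand $\VV|\Hil_z$) that you verify carefully, including the appeal to Lemma \ref{CharAnn:NZDim} to pin down the dimension. Your explicit treatment of purity, again via Lemma \ref{CharAnn:NZDim} applied componentwise, is a useful addition the paper leaves implicit.
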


\section{Finite Multiplicity}

In this section we suppose $\VV$ is an $n$-tuple of commuting shifts. When the dimension of $Z(\Ann(\VV))$ is 1, we show in Lemma \ref{FinalSec:MainThm} that $\VV$ has a finite cyclic set if and only if $V_i$ has finite multiplicity for some $i$. When each $V_i$ is known to have finite multiplicity, we show in Proposition \ref{FinMult:DimForFree} that the condition $\dim Z(\Ann(\VV))=1$ automatically satisfied.

In what follows, we set $\XX'=(X_1,\dots,X_{n-1})$.

\begin{lemma}\label{FinMult:DimZLem}
  Let $\VV$ be an $n$-tuple of commuting shifts and suppose that the dimension of $Z(\Ann(\VV))$ is $1$. Then $\{p(\XX',0):p\in\Ann(\VV)\}$ has finite linear codimension in $\CC[\XX']$. 
\end{lemma}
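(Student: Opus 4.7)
The plan is to realize the set $\{p(\XX',0):p\in\Ann(\VV)\}$ as the image $\phi(\Ann(\VV))$ under the surjective ring homomorphism $\phi\colon\CC[\XX]\to\CC[\XX']$ given by $p\mapsto p(\XX',0)$. Being the image of an ideal under a surjective ring homomorphism, $\phi(\Ann(\VV))$ is itself an ideal in $\CC[\XX']$, so by \cite[Thm.~5.3.6]{CoxEtAl} it has finite linear codimension in $\CC[\XX']$ if and only if its zero set in $\CC^{n-1}$ is finite. That zero set is in bijection, via $w\mapsto(w,0)$, with $Z(\Ann(\VV))\cap(\CC^{n-1}\times\{0\})$, so the lemma reduces to showing this intersection is finite.

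By hypothesis $\dim Z(\Ann(\VV))=1$, and by Lemma \ref{CharAnn:NZDim} every irreducible component has dimension at least $1$; hence $Z(\Ann(\VV))$ has pure dimension $1$. For each irreducible component $\mc{C}$, the intersection $\mc{C}\cap(\CC^{n-1}\times\{0\})$ is a subvariety of $\mc{C}$, so by Proposition \ref{Fact:ProperSubvar} it is either all of $\mc{C}$ or $0$-dimensional (hence finite). It thus suffices to show that no component of $Z(\Ann(\VV))$ is contained in the hyperplane $\{X_n=0\}$.

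This final step is where the shift hypothesis enters, and I expect it to be the main obstacle. Let $\mc{I}_1,\dots,\mc{I}_m$ be the prime factors of the radical ideal $\Ann(\VV)$, so that the components of $Z(\Ann(\VV))$ are the varieties $Z(\mc{I}_j)$. If $Z(\mc{I}_j)\sbse\{X_n=0\}$, then the Nullstellensatz together with the radicality of $\mc{I}_j$ forces $X_n\in\mc{I}_j$. Since each $V_i$ is a shift, $\VV$ is completely non-unitary, so Theorem \ref{CharAnn:MainThm}(2) supplies a non-zero $\VV$-invariant subspace $\Hil_j$ with $\Ann(\VV|\Hil_j)=\mc{I}_j$; consequently $V_n|\Hil_j=0$. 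But $V_n|\Hil_j$ is an isometry on the non-zero space $\Hil_j$ and therefore cannot vanish, a contradiction. Putting everything together, $Z(\Ann(\VV))\cap(\CC^{n-1}\times\{0\})$ is a finite union of finite sets and hence finite, from which the lemma follows.
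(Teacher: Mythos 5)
Your argument is correct and follows essentially the same route as the paper: both identify $\{p(\XX',0):p\in\Ann(\VV)\}$ with the image of $\Ann(\VV)$ in $\CC[\XX]/\langle X_n\rangle\cong\CC[\XX']$, reduce the codimension claim to finiteness of $Z(\Ann(\VV))\cap\{X_n=0\}$ via \cite[Thm.~5.3.6]{CoxEtAl}, and obtain that finiteness by showing $X_n\nin\mc{I}_j$ for every prime factor via Theorem~\ref{CharAnn:MainThm}(2) and the injectivity of the isometry $V_n$. The only cosmetic difference is that you spell out the Nullstellensatz step ($Z(\mc{I}_j)\sbse\{X_n=0\}$ implies $X_n\in\mc{I}_j$) and invoke Lemma~\ref{CharAnn:NZDim} for pure dimension one, whereas the paper leaves those steps implicit.
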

\begin{proof}
  Let $\mc{I}_1,\dots,\mc{I}_m$ be the prime factors of $\Ann(\VV)$. It follows from Theorem \ref{CharAnn:MainThm} that $X_n\nin \mc{I}_j$ for any $j$. This implies that $Z(\mc{I}_j)\cap\{z:z_n=0\}$ is a proper subvariety of $Z(\mc{I}_j)$ and thus of dimension 0. Since this holds for each $j$, we see that $Z(\Ann(\VV))$ meets $\{z:z_n=0\}$ only at finitely many points.
  
  We set $\mc{J}=\{p(\XX',0):p\in\Ann(\VV)\}$ and note that this is an ideal of $\CC[\XX']$ isomorphic to $(\Ann(\VV)+\langle X_n\rangle)/\langle X_n\rangle$, where $\langle X_n\rangle$ is the ideal generated by $X_n$ in $\CC[\XX]$. Because $\CC[\XX']/\mc{J}$ is isomorphic to $\CC[\XX]/(\Ann(\VV)+\langle X_n\rangle)$, it follows that $\mc{J}$ has finite linear codimension in $\CC[\XX']$.
\end{proof}

Recall that a shift $V$ has a finite cyclic set if and only if $V$ has finite multiplicity. In particular, any basis for $\ker V^*$ provides such a cyclic set. Thus, when $\VV$ is an $n$-tuple of shifts of finite multiplicity, it follows that any basis for $\ker V_n^*$ is a finite cyclic set for $\VV$. When $Z(\Ann(\VV))$ has dimension 1, we have the following.

\begin{theorem}\label{FinMult:Equiv}
    Let $\VV$ be an $n$-tuple of commuting shifts and suppose that the dimension of $Z(\Ann(\VV))$ is $1$. Then the following assertions are equivalent.
    \begin{enumerate}
        \item[\rm{(1)}] There is a finite cyclic set for $\VV$.
        \item[\rm{(2)}] $V_i$ has finite multiplicity for each $i$.
        \item[\rm{(3)}] $V_i$ has finite multiplicity for at least one $i$.
    \end{enumerate}
\end{theorem}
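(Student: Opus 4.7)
The plan is to close the cycle of implications $(2)\Rightarrow(1)\Rightarrow(2)$, $(2)\Rightarrow(3)$, and $(3)\Rightarrow(1)$. Three of these are essentially free. The implication $(2)\Rightarrow(1)$ is the remark made immediately before the theorem statement, and $(2)\Rightarrow(3)$ is trivial. For $(3)\Rightarrow(1)$, if $V_i$ has finite multiplicity then any basis $S$ of $\ker V_i^*$ is cyclic for $V_i$ alone via the Wold decomposition $\Hil=\bigoplus_{j\ge 0}V_i^j\ker V_i^*$; since $\CC[X_i]\sbse\CC[\XX]$, such an $S$ is a fortiori cyclic for the whole tuple $\VV$. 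Hence the only substantive work is $(1)\Rightarrow(2)$.

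For $(1)\Rightarrow(2)$, after a permutation of coordinates it suffices to show that $V_n$ has finite multiplicity, that is, $\dim\ker V_n^*<\infty$. The strategy is to let $P$ denote the orthogonal projection onto $\ker V_n^*$ and bound $\dim P\Hil$ using two observations. First, because $V_n$ is an isometry we have $\ker V_n^*=(V_n\Hil)^\bot$, so $PV_n=0$. Splitting an arbitrary polynomial as $p(\XX)=p(\XX',0)+X_n\,\tilde p(\XX)$, this immediately yields the operator identity
\[ P\,p(\VV)\;=\;P\,p(V_1,\dots,V_{n-1},0). \]
Applied to $p\in\Ann(\VV)$, where $p(\VV)=0$, this shows that $P$ annihilates $q(V_1,\dots,V_{n-1})$ for every $q$ in the ideal $\mc{J}:=\{p(\XX',0):p\in\Ann(\VV)\}\sbse\CC[\XX']$.

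Second, Lemma \ref{FinMult:DimZLem} tells us that $\mc{J}$ has finite codimension in $\CC[\XX']$; pick representatives $r_1,\dots,r_M\in\CC[\XX']$ whose cosets form a basis of $\CC[\XX']/\mc{J}$. Let $S$ be a finite cyclic set for $\VV$. The displayed identity above, together with the cyclicity of $S$, forces $\ker V_n^*=P\Hil$ to be the closed linear span of the vectors $P\,q(V_1,\dots,V_{n-1})h$ as $q$ ranges over $\CC[\XX']$ and $h$ over $S$. Decomposing $q=\sum_{i=1}^M c_i r_i + q'$ with $q'\in\mc{J}$, we then get $P\,q(V_1,\dots,V_{n-1})h=\sum_i c_i\,P\,r_i(V_1,\dots,V_{n-1})h$, which confines $\ker V_n^*$ to the span of the $M|S|$ vectors $P\,r_i(V_1,\dots,V_{n-1})h$; hence $\dim\ker V_n^*\le M|S|<\infty$.

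I expect the only real obstacle is spotting the correct invariant to track, namely the projection $P$ onto $\ker V_n^*$. Once one recognizes that $PV_n=0$ collapses the action of $\CC[\XX]$ on $\ker V_n^*$ to that of $\CC[\XX']/\mc{J}$, Lemma \ref{FinMult:DimZLem} delivers the required finite-dimensionality for free.
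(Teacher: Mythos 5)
Your proof is correct and takes essentially the same approach as the paper's. For $(1)\Rightarrow(2)$ the paper works with the inner product $\inner{Q_i(\VV)h_i}{f}$ for $f\in\ker V_n^*$ rather than the projection $P$ explicitly, but this is the same computation: both replace $Q_i(\XX)$ by its slice $Q_i(\XX',0)$, invoke Lemma \ref{FinMult:DimZLem} to reduce modulo $\mc{J}$, and conclude that $\ker V_n^*$ sits inside the span of the finitely many vectors $Pr_j(\VV')h_i$.
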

\begin{proof}
  Clearly (2)$\SUF$(3)$\SUF$(1). Suppose that (1) is true, and let $\{h_1,\dots,h_k\}$ be a cyclic set for $\VV$. We only show that $V_n$ has finite multiplicity, as the same argument applies to an arbitrary permutation of $V_1,\dots,V_n$. Fix an $f\in\ker V_n^*$ and note that
  \[ \sum_{i=1}^k\inner{Q_i(\VV)h_i}{f}=\sum_{i=1}^k\inner{Q_i(\VV',0)h_i}{f}, \quad Q_1,\dots,Q_k\in\CC[\XX] \]
  where $\VV'=(V_1,\dots,V_{n-1})$. By Lemma \ref{FinMult:DimZLem}, there are $r_1,\dots,r_\ell\in \CC[\XX']$ so that
  \[ \CC[\XX']=\CC\cdot r_1+\cdots+\CC\cdot r_\ell+\{p(\XX',0):p\in\Ann(\VV)\}. \]
  Given $Q_1,\dots,Q_k\in\CC[\XX]$, there are $a_{i1},\dots,a_{i\ell}\in\CC$ and $p_i\in\Ann(\VV)$ such that
  \[ Q_i(\XX',0)=p_i(\XX',0)+\sum_{j=1}^\ell a_{ij}r_j, \quad i=1,\dots,k. \]
  As $\inner{p_i(\VV',0)h_i}{f}=\inner{p_i(\VV)h_i}{f}=0$, we have
  \[ \sum_{i=1}^k\inner{Q_i(\VV)h_i}{f}=\sum_{j=1}^\ell\sum_{i=1}^ka_{ij}\inner{r_j(\VV')h_i}{f}. \] 
  Let $\mc{L}$ be the space spanned by $r_j(\VV')h_i$ as $j$ goes from 1 to $\ell$ and $i$ from 1 to $k$. If $f\bot \mc{L}$, then cyclicity implies that $f=0$, and thus we see that $\mc{L}^\bot\cap\ker V_n^*=\{0\}$. Since $\mc{L}$ has finite dimension, it follows that $\ker V_n^*$ does as well.
\end{proof}

Let $\VV$ be as above. Using the notation of Theorem \ref{CharAnn:MainThm} and its proof, recall that the spaces $\Hil_1,\dots,\Hil_m$ in the statement of Theorem \ref{CharAnn:MainThm} are equal to the closures of $\what{\mc{I}}_1(\VV)\Hil,\dots,\what{\mc{I}}_m(\VV)\Hil$, respectively. Since $\VV$ has a finite cyclic set $S$ and $\sum_{i=1}^m\what{\mc{I}}_i$ has finite linear codimension in $\CC[\XX]$, it follows that $\mc{K}=\Hil_1\oplus\dots\oplus\Hil_m$ has finite codimension in $\Hil$. Thus there is a finite codimensional $\VV$-invariant subspace $\mc{K}$ of $\Hil$ so that
    \[ \VV|\mc{K}=(\VV|\Hil_1)\oplus\dots\oplus (\VV|\Hil_m) \]
    with $\Ann(\VV|\Hil_i)=\mc{I}_i$ for $i=1,\dots,m$.

We also observe that when each $V_j$ has finite multiplicity, it is unnecessary to assume $\dim Z(\Ann(\VV))=1$, as we see in the following proposition. We remark, as before, that this result is essentially a corollary of work in \cite{AgMc2005}.

\begin{proposition}\label{FinMult:DimForFree}
    Let $\VV$ be an $n$-tuple of commuting shifts of finite multiplicity. Then $Z(\Ann(\VV))$ has dimension $1$.
\end{proposition}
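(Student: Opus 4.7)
The plan is to bound $\dim Z(\Ann(\VV))$ above by $1$ using Example \ref{Intro:AKMExample} applied pairwise, and to bound it below by $1$ using Lemma \ref{CharAnn:NZDim}. The case $n=1$ is immediate, since a shift has spectrum $\cc{\DD}$ and therefore $\Ann(V_1)=\{0\}$, giving $Z(\Ann(V_1))=\CC$, which has dimension $1$. So assume $n\geq 2$.

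For the upper bound, fix distinct indices $i,j\in\{1,\dots,n\}$. The pair $(V_i,V_j)$ is a commuting pair of shifts, each of finite multiplicity. By Proposition \ref{Fact:OpValFunc}(2), this pair is unitarily equivalent to $(T_\Phi,T_{\grz\cdot I_k})$ on some $H^2(\DD)\otimes\CC^k$ with $\Phi$ a matrix-valued rational inner function, as in Example \ref{Intro:AKMExample}. Consequently there is a nonzero polynomial $P_{ij}(X_i,X_j)$, obtained from the characteristic polynomial of $\Phi$ cleared of denominators, such that $P_{ij}(V_i,V_j)=0$. Viewing $P_{ij}$ as an element of $\CC[\XX]$ that happens to depend only on $X_i$ and $X_j$, we get $P_{ij}\in\Ann(\VV)\cap\CC[X_i,X_j]\setminus\{0\}$. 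Since this holds for every pair of distinct indices, Proposition \ref{Fact:TwoVarsExist} forces $\dim Z(\Ann(\VV))\leq 1$.

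For the lower bound, note that every $V_i$ is a shift, so $\VV$ is completely non-unitary. Since $1\notin\Ann(\VV)$, the ideal $\Ann(\VV)$ is proper, and the Nullstellensatz gives $Z(\Ann(\VV))\neq\emptyset$. Lemma \ref{CharAnn:NZDim} then says no irreducible component of $Z(\Ann(\VV))$ has dimension $0$, so $\dim Z(\Ann(\VV))\geq 1$. Combining the two bounds yields $\dim Z(\Ann(\VV))=1$.

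There is no substantial obstacle here: the technical content has already been packaged in Example \ref{Intro:AKMExample}, and the only thing to notice is that the example may be applied to each pair $(V_i,V_j)$ separately to populate $\Ann(\VV)\cap\CC[X_i,X_j]$ with a nonzero element, which is exactly what Proposition \ref{Fact:TwoVarsExist} requires.
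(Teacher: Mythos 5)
Your proof is correct and follows essentially the same route as the paper: both rely on Proposition \ref{Fact:OpValFunc}(2) (via Example \ref{Intro:AKMExample}) to produce nonzero two-variable polynomials in $\Ann(\VV)$ and then deduce the dimension bound. The paper fixes $V_n$ and produces $P_i\in\CC[X_i,X_n]$ for $i<n$ from a single Toeplitz representation, whereas you apply the example to every pair $(V_i,V_j)$, which makes the appeal to Proposition \ref{Fact:TwoVarsExist} immediate; you also spell out the lower bound $\dim\geq 1$ (via Lemma \ref{CharAnn:NZDim}), which the paper leaves implicit.
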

\begin{proof}
    Let $k=\dim\ker V_n^*$, and represent $\VV$ as an $n$-tuple of Toeplitz operators $(T_{\grJ_1},\dots,T_{\grJ_{n-1}},T_{\grz\cdot I})$ on $H^2(\DD)\otimes \CC^k$, where $\grJ_1,\dots,\grJ_{n-1}$ are matrix valued inner functions. Since $\dim\ker V_i^*<\infty$ for $i=1,\dots,n-1$ as well, the matrix entries of $\grJ_i$ are rational functions. There are then polynomials $P_i\in\CC[X_i,X_n]$ and $Q_i\in\CC[X_n]$ so that $\det(w\cdot I-\grJ_i(z))=P_i(w,z)/Q_i(z)$. Let $\mc{I}$ denote the ideal generated by $P_1,\dots,P_{n-1}$, and note that $\mc{I}\sbse \Ann(\VV)$. As $\dim Z(\mc{I})\leq 1$, the proposition follows.
\end{proof}

\section{Near Unitary Equivalence}

Suppose $\VV$ is an $n$-tuple of commuting isometries on a Hilbert space $\Hil$ and $\WW$ an $n$-tuple of commuting isometries on a Hilbert space $\mc{K}$. If there exists a $\WW$-invariant subspace $\mc{K}'$ of finite codimension in $\mc{K}$ and a unitary operator $U$ from $\Hil$ onto $\mc{K}'$ so that $UV_i=W_iU$ for $i=1,\dots,n$, then we write $\VV\lesssim \WW$. If $\VV\lesssim \WW$ and $\WW\lesssim \VV$, we say that $\VV$ and $\WW$ are \textit{nearly unitarily equivalent} and write $\VV\approx \WW$.

\begin{example}
  Let $V_1$ and $V_2$ denote multiplication by $\pvect{z & 0\\0 & z}$ and $\pvect{0 & z\\ z & 0}$ on the $\CC^2$-valued Hardy space $H^2(\DD)\otimes\CC^2$, respectively. We denote by $\Hil_0$ the cyclic subspace generated by $(1,0)^t$. As $(V_1,V_2)$ is not cyclic, $(V_1,V_2)$ and $(V_1,V_2)|\Hil_0$ are not unitarily equivalent. However, $\Hil_0^\bot=\CC\cdot (0,1)^t$, and thus $(V_1,V_2)|\Hil_0\lesssim (V_1,V_2)$. In fact, it follows from Lemma \ref{NUE:PrecLem} that these two pairs are nearly unitarily equivalent.
\end{example}

The purpose of this section is to prove the following result.

\begin{theorem}\label{FinalSec:MainThm}
  Let $\VV$ and $\WW$ be two cyclic $n$-tuples of commuting shifts. If $\Ann(\VV)=\Ann(\WW)$ and $\dim Z(\Ann(\VV))=1$, then $\VV$ and $\WW$ are nearly unitarily equivalent.
\end{theorem}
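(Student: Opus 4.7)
The plan is to reduce to the case of a prime annihilator and then carry out a spectral-model comparison in the spirit of \cite[Thm.\ 3.3]{AKM}. Because $\VV$ and $\WW$ are cyclic with $\dim Z(\Ann(\VV))=1$, Theorem \ref{FinMult:Equiv} gives that each $V_i$ and each $W_i$ has finite multiplicity. Write $\mc{I}=\Ann(\VV)=\Ann(\WW)$ with prime factors $\mc{I}_1,\dots,\mc{I}_m$ and set $\what{\mc{I}}_j=\bigcap_{i\neq j}\mc{I}_i$. Theorem \ref{CharAnn:MainThm}(2) supplies orthogonal $\VV$-invariant subspaces $\Hil_j=\cc{\what{\mc{I}}_j(\VV)\Hil}$ with $\Ann(\VV|\Hil_j)=\mc{I}_j$, and analogous $\mc{L}_j$ for $\WW$. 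By the remark immediately after Theorem \ref{FinMult:Equiv}, both $\bigoplus_j\Hil_j$ and $\bigoplus_j\mc{L}_j$ are finite-codimensional in their ambient Hilbert spaces. Since near unitary equivalence is transitive and respects finite direct sums, the problem reduces to showing $\VV|\Hil_j\approx\WW|\mc{L}_j$ for each $j$, where now $\mc{I}_j$ is prime and each side is an $n$-tuple of commuting shifts of finite multiplicity that inherits a finite cyclic set from its parent tuple.

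In the prime case Theorem \ref{PrmIdl:MainThm} forces $Z(\mc{I}_j)$ to be a $1$-dimensional irreducible distinguished variety $\mc{W}_j\sbse\CC^n$ (with the integer $s$ there equal to $n$, since restrictions of shifts are shifts and therefore admit no eigenvalues). To prove $\VV|\Hil_j\approx\WW|\mc{L}_j$, I would imitate \cite[Thm.\ 3.3]{AKM}: pass to the minimal unitary extensions $\wtil{\VV}_j,\wtil{\WW}_j$, whose joint spectra lie in the compact real-one-dimensional set $\mc{W}_j\cap\TT^n$, and realize each restricted tuple as coordinate multiplication on a Hardy-type subspace of $L^2(\mu)$ for a suitable scalar spectral measure $\mu$ on $\mc{W}_j\cap\TT^n$ of finite multiplicity. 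One then argues that the two resulting measures are mutually absolutely continuous outside a finite set of atoms and that the two Hardy-type subspaces agree modulo a finite-dimensional correction; exactly this finite discrepancy is what gets absorbed into the ``near'' in near unitary equivalence.

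The main obstacle is this final spectral/model comparison in the $n$-variable setting. In \cite{AKM} the corresponding step is carried out for a $1$-dimensional distinguished curve in $\CC^2$. The natural route in the present context is to pick indices $i\neq n$ for which $\Ann(\VV)\cap \CC[X_i,X_n]$ contains an irreducible inner toral polynomial (supplied by Proposition \ref{Fact:TwoVarsExist} together with the construction in the proof of Theorem \ref{PrmIdl:MainThm}), to compare the pairs $(V_i,V_n)|\Hil_j$ and $(W_i,W_n)|\mc{L}_j$ via \cite[Thm.\ 3.3]{AKM}, and then to lift the resulting finite-codimensional intertwiner to the full $n$-tuples by showing that each remaining coordinate is determined modulo $\mc{I}_j$ by the chosen pair. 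Making this lifting rigorous---in particular, choosing the unitary intertwiner so that it simultaneously intertwines all $n$ coordinates and the finite-codimensional adjustments are compatible across coordinates---is where the real work lies.
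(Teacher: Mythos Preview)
Your reduction to the prime case is in the right spirit and parallels the paper's own reduction (Corollary \ref{NUE:JCor} and Lemma \ref{NUE:ApplyDecomp}). The substantial gap is in the prime case itself. Your plan is to extract a pair $(V_i,V_n)$, apply \cite[Thm.\ 3.3]{AKM} to it, and then lift the resulting near-unitary equivalence to the full $n$-tuple by arguing that the remaining coordinates are ``determined modulo $\mc{I}_j$'' by the chosen pair. This lifting does not go through. An irreducible relation $p_k(X_k,X_n)\in\mc{I}_j$ does not make $V_k$ a function of $V_n$: over a generic point of the curve there are $\deg_{X_k}p_k$ choices for the $k$-th coordinate, and a unitary intertwining $(V_i,V_n)$ with $(W_i,W_n)$ has no reason to match the branch realized by $V_k$ with the one realized by $W_k$. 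Nor is the pair $(V_i,V_n)|\Hil_j$ guaranteed to be cyclic with a single vector, which is what \cite[Thm.\ 3.3]{AKM} requires. So the step you flag as ``where the real work lies'' is not merely unfinished work; the pair-then-lift strategy does not supply an intertwiner for all $n$ coordinates.

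The paper avoids this entirely by never projecting to pairs. It represents $\VV$ as coordinate multiplication $M_{\XX,\mu}$ for a diffuse measure $\mu$ on $Z(\mc{I}_j)\cap\TT^n$ (Lemma \ref{NUE:GetMu}), desingularizes $Z(\mc{I}_j)\cap\cc{\DD^n}$ by a finite Riemann surface $\mc{S}$ with a proper map $h:\cc{\mc{S}}\to\cc{\DD^n}$ (\S\ref{DesingSubsec}), and pulls $\mu$ back to a measure $\nu$ on $\pd\mc{S}$. The canonical model is $M_{h,\grw}$ for harmonic measure $\grw$ (Lemma \ref{NUE:HarmIsomLem}), and the comparison $M_{h,\nu}\approx M_{h,\grw}$ (Lemma \ref{NUE:KKLemma}) is carried out via the Ahern--Sarason theory for hypo-Dirichlet algebras: Forelli's lemma plus a Kolmogorov--Krein argument force $\nu\ll\grw$, and log-integrability of $d\nu/d\grw$ yields an outer-function factorization giving the finite-codimensional embedding. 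Because all coordinate functions $h_1,\dots,h_n$ live together on $\mc{S}$, the resulting intertwiner automatically respects all $n$ coordinates simultaneously---exactly what your approach cannot guarantee.
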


\noindent The proof is divided into three parts, approximately following what is done in \cite{AKM}. First we show, by Lemma \ref{NUE:ApplyDecomp}, that it is sufficient to consider the case where $\Ann(\VV)$ is prime. We then construct an $n$-tuple of commuting shifts in Lemma \ref{NUE:HarmIsomLem} based on the desingularization of $Z(\Ann(\VV))\cap\cc{\DD^n}$ that serves as a common model for all $n$-tuples with annihilator equal to $\Ann(\VV)$. The proof is finally completed with Lemma \ref{NUE:KKLemma} by showing that $\VV$ is nearly unitarily equivalent to this model. Before this, however, we require a few additional facts.

\begin{lemma}\label{NUE:NotInRange}
  Let $\VV$ be a $n$-tuple of commuting shifts of finite multiplicity. For any $z_n\in \DD$, there exist $z_1,\dots,z_{n-1}\in\cc{\DD}$ so that $(\cc{z}_1,\dots,\cc{z}_n)$ is in the joint point spectrum of $\VV^*$.
\end{lemma}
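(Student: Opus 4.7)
The plan is to use the explicit Toeplitz model provided by Proposition \ref{Fact:OpValFunc}(2). Since $\VV$ is an $n$-tuple of commuting shifts of finite multiplicity, after unitary equivalence we may write
\[ \VV = (T_{\grJ_1}, \dots, T_{\grJ_{n-1}}, T_{\grz \cdot I_k}) \]
acting on $H^2(\DD) \otimes \CC^k$, where $k = \dim \ker V_n^*$ and $\grJ_1, \dots, \grJ_{n-1}$ are matrix-valued inner functions on $\DD$ whose entries are rational (this uses that each $V_i$ has finite multiplicity).

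Fix $z_n \in \DD$ and let $k_{z_n}(w) = 1/(1 - \cc{z}_n w)$ be the Szeg\"o kernel at $z_n$. The vectors $k_{z_n} \otimes v$ with $v \in \CC^k$ form the eigenspace of $T^*_{\grz \cdot I_k}$ at eigenvalue $\cc{z}_n$, and for any matrix-valued $H^\infty$ symbol $\grF$ one has the identity $T^*_{\grF}(k_{z_n} \otimes v) = k_{z_n} \otimes \grF(z_n)^* v$. Consequently, if $v \in \CC^k \setminus \{0\}$ satisfies $\grJ_i(z_n)^* v = \mu_i v$ for each $i \in \{1,\dots,n-1\}$, then $k_{z_n} \otimes v$ is a joint eigenvector of $\VV^*$ at $(\cc{\mu}_1, \dots, \cc{\mu}_{n-1}, \cc{z}_n)$; we may then take $z_i = \cc{\mu}_i$.

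The key step is producing the common eigenvector $v$. Since $V_i V_j = V_j V_i$ and the symbol map $\grF \mapsto T_{\grF}$ is a homomorphism on $H^\infty$, the symbols satisfy $\grJ_i(w) \grJ_j(w) = \grJ_j(w) \grJ_i(w)$ for $w \in \DD$; in particular the matrices $\grJ_1(z_n), \dots, \grJ_{n-1}(z_n)$ in $M_k(\CC)$ pairwise commute, and so do their adjoints. Any commuting family of matrices on a nonzero finite-dimensional space admits a common eigenvector (by a standard induction: an eigenspace of one member is invariant under the rest, and one continues in that invariant subspace). This produces the desired $v \ne 0$ and scalars $\mu_1, \dots, \mu_{n-1}$.

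It remains to verify $|z_i| = |\mu_i| \le 1$, i.e.\ $z_i \in \cc{\DD}$. Because $\grJ_i$ is an inner matrix-valued function, its boundary values are isometries (in fact unitaries, since we are in the square matrix case) and hence $\|\grJ_i(w)\| \le 1$ for all $w \in \DD$ by the maximum principle applied to the subharmonic function $w \mapsto \|\grJ_i(w)\|$. Therefore every eigenvalue of $\grJ_i(z_n)^*$ lies in $\cc{\DD}$, so $|\mu_i| \le 1$. The only conceivable obstacle is the commutativity of the symbols, but this is immediate from $T_{\grJ_i \grJ_j} = T_{\grJ_i} T_{\grJ_j}$ for symbols in $H^\infty$, so the proof reduces to the linear-algebraic existence of a common eigenvector for a commuting family of matrices.
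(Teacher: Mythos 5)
Your proof is correct and follows essentially the same route as the paper: represent $\VV$ as Toeplitz operators $(T_{\grJ_1},\dots,T_{\grJ_{n-1}},T_{\grz I_k})$ on $H^2(\DD)\otimes\CC^k$, pick the Szeg\H{o}-kernel eigenvector of $T_\grz^*$ at $\cc{z}_n$, and use a common eigenvector of the commuting finite matrices $\grJ_i(z_n)^*$. The only additions are the explicit justification that the $\grJ_i(z_n)$ commute and that $|z_i|\le 1$, both of which the paper leaves implicit (the latter also follows at once from $V_i^*$ being a contraction).
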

\begin{proof}
  Since $V_n$ is a shift of finite multiplicity, and $V_1,\dots,V_{n-1}$ commute with $V_n$, we represent $\VV$ as an $n$-tuple of matrix valued Toeplitz operators \[\VV=(T_{\grJ_1},\dots,T_{\grJ_{n-1}},T_{\grz\cdot I_k})\] on $H^2(\DD)\otimes\CC^k$, where $k=\dim\ker V_n^*$, $I_k$ is the identity on $\CC^k$, $\grz$ is the coordinate function on $\DD$, and $\grJ_1,\dots,\grJ_{n-1}$ are matrix valued inner functions on $\DD$. Let $g$ be a non-zero vector in $H^2(\DD)$ so that $T_\grz^* g= \cc{z}_ng$. Then $T_{\grJ_i}^*(g\otimes u)=g\otimes \grJ_i(z_n)^*u$ for any $u\in\CC^k$. Since $\grJ_1(z_n)^*,\dots,\grJ_{n-1}(z_n)^*$ are commuting matrices, they have a common eigenvector $v$, say with $\grJ_1(z_n)^*v=\cc{z}_1v$ $,\dots,$ $\grJ_{n-1}(z_n)^*v=\cc{z}_{n-1}v$. Thus
  \[ T_{\grJ_i}^*(g\otimes v)=\cc{z}_i\cdot(g\otimes v) \]
  for $i=1,\dots,n-1$.
\end{proof}

We remark that when $\dim Z(\Ann(\VV))=1$, the preceding lemma holds without the assumption of finite multiplicity. This is due to the fact that each operator $\grJ_j(z)$ will, in this case, have a non-zero annihilating polynomial.

We also note that Lemma \ref{NUE:PolyMapLem}(2) is well-known, but we sketch the proof nevertheless.

\begin{lemma}\label{NUE:PolyMapLem}
  Let $\VV$ be an $n$-tuple of commuting isometries on a Hilbert space $\Hil$, and let $Q\in \CC[X_n]\bksl\{0\}$.
  \begin{enumerate}
    \item[\rm{(1)}] If $V_n$ has no eigenvalues, then $\VV|\cc{Q(V_n)\Hil}$ is unitarily equivalent to $\VV$.
    \item[\rm{(2)}] If $V_n$ is a shift of finite multiplicity, then $\cc{Q(V_n)\Hil}$ has finite codimension in $\Hil$.
  \end{enumerate}
\end{lemma}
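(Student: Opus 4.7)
For (1), my plan is to construct an explicit unitary intertwiner using Möbius factors of $V_n$. I would factor $Q(z)=c\prod_{i=1}^d(z-\grl_i)$ and split the roots into three sets $\Lambda_{\text{in}}\sbse\DD$, $\Lambda_{\text{on}}\sbse\TT$, and $\Lambda_{\text{out}}\sbse\EE$. For each $\grl\in\Lambda_{\text{in}}$, the Möbius function gives
\[ \phi_\grl(V_n)=(V_n-\grl I)(I-\cc\grl V_n)^{-1}, \]
which is well defined by Neumann series (since $\|\cc\grl V_n\|\leq|\grl|<1$) and is an isometry: the identity $\phi_\grl(V_n)^*\phi_\grl(V_n)=I$ is a short computation that uses only $V_n^*V_n=I$. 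I would set $\Phi=\prod_{\grl\in\Lambda_{\text{in}}}\phi_\grl(V_n)$, a commuting product of isometries, hence itself an isometry on $\Hil$ that commutes with every $V_i$ because it is a rational function of $V_n$ alone.

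The heart of the proof will be to identify $\Phi\Hil$ with $\cc{Q(V_n)\Hil}$. I would factor $Q(V_n)=A\cdot B$, where $B=\prod_{\grl\in\Lambda_{\text{in}}}(V_n-\grl I)$ and $A$ gathers the remaining factors and the constant $c$. The $\Lambda_{\text{out}}$ factors of $A$ are invertible, while each $\Lambda_{\text{on}}$ factor $V_n-\grl I$ should have dense range on every $V_n$-invariant subspace $M$; this is the main obstacle. The argument for the latter is: $V_n|M$ is an isometry inheriting the no-eigenvalue hypothesis, so a nonzero $h$ with $(V_n|M)^*h=\cc\grl h$ and $|\grl|=1$ would satisfy $\|h\|=\|(V_n|M)^*h\|$, forcing $h\in\ran(V_n|M)$; writing $h=(V_n|M)g$ then yields $(V_n|M)g=\grl g$, contradicting the no-eigenvalue assumption. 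Coupled with the identity $(V_n-\grl I)\Hil=\phi_\grl(V_n)\Hil$ for $|\grl|<1$ (which comes from $V_n-\grl I=\phi_\grl(V_n)(I-\cc\grl V_n)$), iteration gives $B\Hil=\Phi\Hil$; and since $A$ has dense range on the $V_n$-invariant subspace $B\Hil$, we conclude $\cc{Q(V_n)\Hil}=\cc{A(B\Hil)}=B\Hil=\Phi\Hil$. Hence $\Phi:\Hil\to\cc{Q(V_n)\Hil}$ is a surjective isometry, i.e., a unitary intertwining $\VV$ with $\VV|\cc{Q(V_n)\Hil}$.

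For (2), I would pass to the Wold-decomposition representation $V_n\cong T_{\grz\cdot I_k}$ on $H^2(\DD)\otimes\CC^k$ with $k=\dim\ker V_n^*<\infty$. In this model, $\Phi$ becomes multiplication by the finite Blaschke product $B(\grz)=\prod_{\grl\in\Lambda_{\text{in}}}\phi_\grl(\grz)$, so $\cc{Q(V_n)\Hil}=BH^2(\DD)\otimes\CC^k$ has codimension $k\cdot\deg B\leq k\deg Q<\infty$ in $\Hil$.
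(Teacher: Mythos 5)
Your argument for (1) is correct and is essentially the same as the paper's: the paper likewise discards the roots of $Q$ outside the open disc by observing that $\operatorname{ran}(V_n-\lambda)$ is dense for $|\lambda|\geq 1$ (since $V_n$ has no eigenvalues, $V_n^*$ has none on $\TT$), and then builds the isometric Blaschke product $B=\prod(V_n-\lambda_j)(I-\cc\lambda_jV_n)^{-1}$ as the intertwining unitary onto $\cc{Q(V_n)\Hil}$. Your more explicit reduction to $V_n$-invariant subspaces is a correct unwinding of the paper's one-line assertion.

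For (2) you take a genuinely different route. You reuse the identity $\cc{Q(V_n)\Hil}=\Phi\Hil$ from part (1) (valid here because a shift has no eigenvalues), pass to the Wold model $V_n\cong T_{\zeta I_k}$ on $H^2(\DD)\otimes\CC^k$, and read off the codimension $k\cdot\deg B\leq k\deg Q$ of $BH^2\otimes\CC^k$ directly. The paper instead argues abstractly that $\ker Q(V_n)^*$ is finite-dimensional, by induction on $\deg Q$: if $\mc K$ is finite-dimensional then $\{f:(V_n-\lambda)^*f\in\mc K\}$ is finite-dimensional, because $\ker(V_n-\lambda)^*$ is finite-dimensional for a finite-multiplicity shift and the preimage of a finite-dimensional space modulo this kernel embeds into $\mc K$. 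Both arguments are sound; yours has the advantage of producing an explicit codimension bound, while the paper's avoids any model-theoretic representation and works directly with the adjoint kernel.
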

\begin{proof}
  (1) Since $V_n$ has no eigenvalues, $V_n^*$ has no eigenvalues in $\TT$. Thus $\ran(V_n-\grl)$ is dense whenever $|\grl|\geq 1$. We therefore suppose that $Q(X_n)=\prod_{j=1}^k (X_n-\grl_k)$ where $\grl_1,\dots,\grl_k\in \DD$. The operator $B=\prod_{j=1}^k\frac{V_n-\grl_k}{1-\cc{\grl}_kV_n}$ is an isometry that commutes with $\VV$, and $B\Hil=\cc{\ran Q(V_n)}$.
  
  \vspace{10pt}
  \noindent (2) We show that $Q(V_n)^*$ has a finite dimensional kernel. For this, it suffices to note that if $\mc{K}$ is a finite dimensional subspace, then $\{f\in\Hil: (V_n-\grl)^*f\in\mc{K}\}$ is finite dimensional for any $\grl\in\CC$. 
\end{proof}

\begin{lemma}\label{NUE:PrecLem}
    Let $\VV$ and $\WW$ be $n$-tuples of commuting isometries. Suppose $\WW\lesssim \VV$ and at least one element of $\VV$ has no eigenvalues. Then :
    \begin{enumerate}
        \item[\rm{(1)}] $\Ann(\VV)=\Ann(\WW)$; and
        \item[\rm{(2)}] $\WW$ is an $n$-tuple of commuting shifts of finite multiplicity if and only if $\VV$ is an $n$-tuple of commuting shifts of finite multiplicity, in which case $\VV\approx \WW$.
    \end{enumerate}
\end{lemma}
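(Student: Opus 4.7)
My plan for the proof of Lemma \ref{NUE:PrecLem} splits into three steps: the annihilator equality in (1), the multiplicity equivalence in (2), and the reverse inequality $\VV\lesssim\WW$ that upgrades $\WW\lesssim\VV$ to $\VV\approx\WW$.

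For part (1), the inclusion $\Ann(\VV)\sbse\Ann(\WW)$ is immediate from $\WW\cong\VV|\Hil'$ under the given intertwining unitary. For the reverse, let $p\in\Ann(\WW)$. Then $p(\VV)$ vanishes on $\Hil'$, so $\ker p(\VV)\spse\Hil'$ has finite codimension and thus $\ran p(\VV)$ is finite-dimensional. Since $p(\VV)$ commutes with each $V_j$, this range is $\VV$-invariant. If it were nonzero, then $V_i$ restricted to it would be an isometry on a nonzero finite-dimensional Hilbert space, hence unitary, hence would possess an eigenvalue---contradicting the no-eigenvalue hypothesis on $V_i$. Therefore $p(\VV)=0$.

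For the multiplicity equivalence in (2), the key is the dimension identity $\dim\ker V_j^* = \dim\ker(V_j|\Hil')^*$, obtained by computing $\dim(\Hil\ominus V_j\Hil')$ two ways: once as $(\Hil\ominus\Hil')\oplus(\Hil'\ominus V_j\Hil')$ and once as $(\Hil\ominus V_j\Hil)\oplus V_j(\Hil\ominus\Hil')$. This instantly handles the backward direction. For the forward direction, assume $\WW$ is shifts of finite multiplicity; I first show each $V_j$ on $\Hil$ is a shift. Let $\mc{U}_j=\bigcap_k V_j^k\Hil$ be the unitary part of $V_j$. The subspace $\mc{U}_j\cap\Hil'$ is $V_j$-invariant with codimension at most $\dim\Hil'^\bot$ in $\mc{U}_j$, and because any finite-codimensional invariant subspace of a unitary is automatically reducing, $V_j|(\mc{U}_j\cap\Hil')$ is unitary. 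But it is also a restriction of the shift $V_j|\Hil'$, so $\mc{U}_j\cap\Hil'=\{0\}$, forcing $\mc{U}_j\sbse\Hil'^\bot$ to be finite-dimensional. If $\mc{U}_j\neq\{0\}$, the unitary $V_j|\mc{U}_j$ on a nonzero finite-dimensional Hilbert space has an eigenvalue $\grl\in\TT$, and its eigenspace $\mc{E}_{j,\grl}$ is contained in $\mc{U}_j$, hence finite-dimensional, and is $\VV$-reducing by Lemma \ref{Decomp:EgnLem1} applied to each pair $(V_j,V_m)$. Then $V_i|\mc{E}_{j,\grl}$ is an isometry on a nonzero finite-dimensional space, hence unitary, hence has an eigenvalue---contradicting the hypothesis. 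So $\mc{U}_j=\{0\}$ and $V_j$ is a shift; its finite multiplicity then follows from the dimension identity.

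For the final claim $\VV\approx\WW$, we already have $\WW\lesssim\VV$, so we need $\VV\lesssim\WW$. Via the identification $\WW\cong\VV|\Hil'$, this reduces to producing a finite-codimensional $\VV$-invariant subspace of $\Hil'$ on which $\VV$ restricts unitarily equivalent to $\VV$ itself. With respect to $\Hil=\Hil'\oplus\Hil'^\bot$, each $V_j$ has a block upper-triangular form whose compression to $\Hil'^\bot$ is an operator $B_j$ on a finite-dimensional space. After relabeling so that we may work with $V_n$, let $Q\in\CC[X_n]\bksl\{0\}$ be the minimal polynomial of $B_n$; the block structure then gives $Q(B_n)=0$ and hence $Q(V_n)\Hil\sbse\Hil'$. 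Because $V_n$ is a shift of finite multiplicity (and, being a shift, has no eigenvalues), Lemma \ref{NUE:PolyMapLem}(1) produces a unitary equivalence $\VV\cong\VV|\cc{Q(V_n)\Hil}$, and Lemma \ref{NUE:PolyMapLem}(2) gives $\cc{Q(V_n)\Hil}$ finite codimension in $\Hil$. Thus $\cc{Q(V_n)\Hil}$ is a finite-codimensional $\VV$-invariant subspace of $\Hil'$ on which $\VV$ restricts unitarily equivalent to $\VV$, giving $\VV\lesssim\VV|\Hil'\cong\WW$. I expect the main obstacle to be the forward direction of the multiplicity equivalence: the Wold-decomposition analysis for a single $V_j$ only forces its unitary part to be finite-dimensional, and producing an actual contradiction requires transferring an eigenvalue of $V_j$ to one of $V_i$, which is exactly what Lemma \ref{Decomp:EgnLem1} accomplishes through the common $\VV$-reducing eigenspace $\mc{E}_{j,\grl}$.
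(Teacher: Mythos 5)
Your proof is correct but takes a genuinely different route from the paper for parts (1) and (2). The paper handles both parts through a single device: it produces a polynomial $Q$ with $Q(V_n)\Hil\sbse\mc{K}$, so that by Lemma \ref{NUE:PolyMapLem}(1) the restriction $\VV|\cc{Q(V_n)\Hil}$ is unitarily equivalent to $\VV$ and sandwiched inside $\VV|\mc{K}$; this sandwich simultaneously forces $\Ann(\VV)=\Ann(\VV|\mc{K})$, transfers the shift property via the nested unitary parts $\bigcap_j V_i^j\cc{Q(V_n)\Hil}\sbse\bigcap_j V_i^j\mc{K}\sbse\bigcap_j V_i^j\Hil$, and yields the multiplicity equivalence via the chains \eqref{NUE:Chain1} and \eqref{NUE:Chain2}. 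You instead argue (1) and the shift half of (2) directly from the elementary observation that a nonzero finite-dimensional $\VV$-invariant subspace forces $V_i$ to restrict to a unitary on it and hence have an eigenvalue, contradicting the hypothesis---applied once to $\ran p(\VV)$ and once, via Lemma \ref{Decomp:EgnLem1}, to the $\VV$-reducing eigenspace inside $\mc{U}_j$. Your dimension identity $\dim\ker V_j^*=\dim\ker(V_j|\Hil')^*$ then settles the multiplicity count directly, which is somewhat quicker than the paper's chain argument. For the final $\approx$ you fall back on the $Q(V_n)$ trick together with Lemma \ref{NUE:PolyMapLem}, which is identical to the paper and seems unavoidable. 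A minor imprecision: $\mc{U}_j$ need not lie inside $\Hil'^\bot$; rather $\mc{U}_j\cap\Hil'=\{0\}$ makes the orthogonal projection onto $\Hil'^\bot$ injective on $\mc{U}_j$, which is what yields $\dim\mc{U}_j<\infty$.
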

\begin{proof}
  We may (and do) suppose that there is a $\VV$-invariant finite codimensional subspace $\mc{K}$ of $\Hil$ such that $\WW=\VV|\mc{K}$. By a permutation of coordinates, we also suppose that $V_n$ has no eigenvalues. Denote by $A$ the compression of $V_n$ to $\mc{K}^\bot$. Since $\mc{K}^\bot$ has finite dimension, there is a non-zero polynomial $Q\in\CC[X_n]$ so that $Q(A)=0$ and thus $Q(V_n)\Hil\sbse \mc{K}$. By Lemma \ref{NUE:PolyMapLem}, $\VV|\cc{Q(V_n)\Hil}$ is unitarily equivalent to $\VV$, and so
  \[ \Ann(\VV)\sbse \Ann(\VV|\mc{K})\sbse \Ann(\VV|\cc{Q(V_n)\Hil})=\Ann(\VV). \]
  
  For any $i\in\{1,\dots,n\}$,
  \[ \bigcap_{j=0}^\infty V_i^j \cc{Q(V_n)\Hil}\sbse \bigcap_{j=0}^\infty V_i^j\mc{K}\sbse \bigcap_{j=0}^\infty V_i^j\Hil.  \]
  As $V_i|\cc{Q(V_n)\Hil}$ is unitarily equivalent to $V_i$, it follows that $V_i$ is a shift if and only if $V_i|\mc{K}$ is a shift. We also have the following inclusions;
  \begin{equation}\label{NUE:Chain1}
    V_i\cc{Q(V_n)\mc{K}} \sbse V_i\mc{K} \sbse \mc{K} \sbse \Hil
  \end{equation}
  and
  \begin{equation}\label{NUE:Chain2}
    V_i\cc{Q(V_n)\mc{K}} \sbse V_i\cc{Q(V_n)\Hil} \sbse \cc{Q(V_n)\Hil} \sbse \Hil.
  \end{equation}
  As $\mc{K}$ has finite codimension in $\Hil$, we know that $V_i\cc{Q(V_n)\mc{K}}$ has finite codimension in $V_i\cc{Q(V_n)\Hil}$. When each $V_j$ has finite multiplicity, it follows that every subspace in \eqref{NUE:Chain2} has finite codimension in the next. Thus $V_i\mc{K}$ has finite codimension in $\mc{K}$. When each $V_j|\mc{K}$ has finite multiplicity, it follows that every subspace in \eqref{NUE:Chain1} has finite codimension in the next. Then $V_i|\cc{Q(V_n)\Hil}$ has finite multiplicity. Because $V_i|\cc{Q(V_n)\Hil}$ is unitarily equivalent to $V_i$, it follows that $V_i$ has finite multiplicity.
  
  Finally, we note that $Q(V_n)\mc{K}\sbse Q(V_n)\Hil\sbse \mc{K}\sbse \Hil$. Thus, if either $\VV$ or $\VV|\mc{K}$ is an $n$-tuple of shifts of finite multiplicity, then $\cc{Q(V_n)\Hil}$ has finite codimension in $\mc{K}$. In other words,
  \[ \VV|\cc{Q(V_n)\Hil}\lesssim \VV|\mc{K}\lesssim \VV. \]
  Because $\VV|\cc{Q(V_n)\Hil}$ is unitarily equivalent to $\VV$, we have $\VV\approx \VV|\mc{K}$.
\end{proof}

\begin{corollary}\label{NUE:JCor}
  Let $\VV$ be an $n$-tuple of commuting shifts of finite multiplicity on a Hilbert space $\Hil$ such that $Z(\Ann(\VV))$ has dimension $1$. Denote by $\mc{I}_1,\dots,\mc{I}_m$ the prime factors of $\Ann(\VV)$ and set $\mc{J}=\sum_{i=1}^m\bigcap_{j\neq i}\mc{I}_j$. Then $\VV|\cc{\mc{J}(\VV)\Hil}$ is nearly unitarily equivalent to $\VV$.
\end{corollary}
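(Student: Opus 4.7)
My approach is to identify $\cc{\mc{J}(\VV)\Hil}$ with the orthogonal sum of the subspaces $\Hil_1,\dots,\Hil_m$ produced by Theorem \ref{CharAnn:MainThm}, show that this sum has finite codimension in $\Hil$, and then appeal to Lemma \ref{NUE:PrecLem} to upgrade the relation $\VV|\mc{K}\lesssim\VV$ to near unitary equivalence.

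Write $\what{\mc{I}}_i=\bigcap_{j\neq i}\mc{I}_j$ and $\Hil_i=\cc{\what{\mc{I}}_i(\VV)\Hil}$. Because $\VV$ is an $n$-tuple of shifts it is completely non-unitary, so Theorem \ref{CharAnn:MainThm} applies and tells us that the $\Hil_i$ are non-zero, pairwise orthogonal, $\VV$-invariant subspaces with $\Ann(\VV|\Hil_i)=\mc{I}_i$. Since $\mc{J}=\sum_i\what{\mc{I}}_i$, the algebraic sum $\mc{J}(\VV)\Hil$ equals $\sum_i\what{\mc{I}}_i(\VV)\Hil$, whose closure is $\bigoplus_{i=1}^m \Hil_i$; call this $\mc{K}$. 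It is $\VV$-invariant by construction.

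Next I claim $\mc{K}$ has finite codimension. Lemma \ref{CharAnn:NZDim} together with $\dim Z(\Ann(\VV))=1$ forces each $Z(\mc{I}_i)$ to have dimension exactly $1$; then Corollary \ref{Intro:FctrsLem} gives that $\CC[\XX]/\mc{J}$ has finite linear dimension (the case $m=1$ is trivial since $\mc{J}=\CC[\XX]$ and $\mc{K}=\Hil$). By Theorem \ref{FinMult:Equiv}, $\VV$ has a finite cyclic set $\{h_1,\dots,h_r\}$. Choosing a finite-dimensional linear complement $L$ of $\mc{J}$ in $\CC[\XX]$, every $p(\VV)h_i$ lies in $L(\VV)\{h_1,\dots,h_r\}+\mc{J}(\VV)\Hil\subseteq L(\VV)\{h_1,\dots,h_r\}+\mc{K}$, so this finite-dimensional-plus-closed (hence closed) subspace is dense in $\Hil$, thus equal to $\Hil$. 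Therefore $\mc{K}$ has finite codimension in $\Hil$.

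Finally, the inclusion $\mc{K}\hookrightarrow\Hil$ gives $\VV|\mc{K}\lesssim\VV$. Since each $V_i$ is a shift, $V_1$ has no eigenvalues (any eigenvector of an isometry would lie in every $V_1^j\Hil$). Because $\VV$ is an $n$-tuple of commuting shifts of finite multiplicity, Lemma \ref{NUE:PrecLem}(2) applies to yield $\VV\approx \VV|\mc{K}=\VV|\cc{\mc{J}(\VV)\Hil}$, which is the desired conclusion. No real obstacle arises here: every ingredient has been assembled earlier in the paper, and the argument is essentially the observation that $\mc{K}$ is simultaneously the closure of $\mc{J}(\VV)\Hil$ and has finite codimension in $\Hil$.
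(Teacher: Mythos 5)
Your proof is correct and follows essentially the same route as the paper's: use Corollary \ref{Intro:FctrsLem} to get a finite-dimensional complement $L$ of $\mc{J}$ in $\CC[\XX]$, combine it with a finite cyclic set to show $\cc{\mc{J}(\VV)\Hil}$ has finite codimension, then invoke Lemma \ref{NUE:PrecLem}. The extra material you include is sound but not strictly needed: the identification of $\cc{\mc{J}(\VV)\Hil}$ with $\bigoplus_i \Hil_i$ plays no role in establishing finite codimension (though the paper records it in the surrounding discussion for later use), while your explicit handling of the $m=1$ case and your justification via Lemma \ref{CharAnn:NZDim} that each $\dim Z(\mc{I}_i)=1$ are welcome pieces of care that the paper's terse proof leaves implicit.
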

\begin{proof}
  By Corollary \ref{Intro:FctrsLem}, there is a finite linear dimensional subspace $L$ of $\CC[\XX]$ so that $\CC[\XX]=L+\mc{J}$. Since each $V_i$ has finite multiplicity, there is a finite cyclic set $C$ in $\Hil$ for $\VV$. Thus $L(\VV) C+\mc{J}(\VV)\Hil$ is dense in $\Hil$, and so $\cc{\mc{J}(\VV)\Hil}$ has finite codimension in $\Hil$. The corollary now follows from Lemma \ref{NUE:PrecLem}.
\end{proof}

Let $\mc{I}$ be a radical ideal of $\CC[\XX]$ with $\dim Z(\mc{I})=1$, and let $\mc{I}_1,\dots,\mc{I}_m$ the prime factors of $\mc{I}$. We set $\what{\mc{I}}_i=\bigcap_{j\neq i}\mc{I}_j$ for each $i$ and $\mc{J}=\sum_{i=1}^m\what{\mc{I}}_i$. Suppose there are two $n$-tuples of commuting shifts of finite multiplicity, $\VV$ on $\Hil$ and $\WW$ on $\mc{K}$, so that $\Ann(\VV)=\Ann(\WW)=\mc{I}$. We define $\Hil_i$ to be the closure of $\what{\mc{I}}_i(\VV)\Hil$ and $\mc{K}_i$ to be the closure of $\what{\mc{I}}_i(\WW)\mc{K}$. Theorem \ref{CharAnn:MainThm} then states that
\[ \VV|\cc{\mc{J}(\VV)\Hil}=\bigoplus_{i=1}^m \VV|\Hil_i, \quad \WW|\cc{\mc{J}(\WW)\mc{K}}=\bigoplus_{i=1}^m \WW|\mc{K}_i. \]
Thus, to show that $\VV$ and $\WW$ are nearly unitarily equivalent, it suffices to prove that $\VV|\Hil_i$ is nearly unitarily equivalent to $\WW|\mc{K}_i$ for $i=1,\dots,n$. However, we note that even if $\VV$ and $\WW$ are both cyclic, it may be that neither $\VV|\Hil_i$ nor $\WW|\mc{K}_i$ is cyclic. The purpose of Lemma \ref{NUE:ApplyDecomp} is to address this issue. First, however, we recall a result from the theory of subnormal operators, for which we need the following notation. Given a finite positive Borel measure $\mu$ on $\TT^n$, we denote by $P^2(\mu)$ the $L^2$-closure of $\CC[\XX]$, where here for $i=1,\dots,n$ we identify $X_i$ with the coordinate function $z\mapsto z_i$. If $f$ is a bounded Borel function on $\TT^n$, we denote by $M_{f,\mu}$ the operator of multiplication by $f$ acting on $P^2(\mu)$, and set $M_{\XX,\mu}=(M_{X_1,\mu},\dots,M_{X_n,\mu})$.

\begin{lemma}\label{NUE:GetMu}
  Let $\VV$ be a cyclic $n$-tuple of commuting shifts on a Hilbert space $\Hil$. There is a diffuse finite positive Borel measure $\mu$ concentrated in $Z(\Ann(\VV))\cap\TT^n$ so that $\VV$ is unitarily equivalent to $M_{\XX,\mu}$. 
\end{lemma}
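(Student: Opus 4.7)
The plan is to construct $\mu$ as the scalar spectral measure of the minimal unitary extension of $\VV$ at a cyclic vector, then verify the three required properties (unitary equivalence, support, diffuseness) in turn. The last of these will be the main obstacle.

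Fix a cyclic vector $h$ for $\VV$, and let $\wtil{\VV}=(\wtil{V}_1,\dots,\wtil{V}_n)$ be the minimal unitary extension of $\VV$ on $\wtil{\Hil}$. Since the $\wtil{V}_i$ are commuting unitaries, the joint spectral theorem gives a projection-valued measure $E$ on $\TT^n$ with $\wtil{V}_i=\int z_i\,dE(z)$. Define $\mu(B)=\inner{E(B)h}{h}$, a finite positive Borel measure on $\TT^n$. For every $p\in\CC[\XX]$,
\[ \|p(\VV)h\|^2 = \|p(\wtil{\VV})h\|^2 = \int_{\TT^n}|p(z)|^2\,d\mu(z), \]
since $\wtil{\VV}^\gra|\Hil=\VV^\gra$ for $\gra\in\NN_0^n$. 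By cyclicity the polynomial orbit of $h$ is dense in $\Hil$, and by definition $\CC[\XX]$ is dense in $P^2(\mu)$, so the map $p(\VV)h\mapsto p$ extends to a unitary $U:\Hil\to P^2(\mu)$, which visibly satisfies $UV_i=M_{X_i,\mu}U$. For the support condition, any $p\in\Ann(\VV)$ has $\int|p|^2\,d\mu=\|p(\VV)h\|^2=0$, so $p$ vanishes $\mu$-a.e. Because $\CC[\XX]$ is Noetherian, $\Ann(\VV)$ is finitely generated, and intersecting the corresponding full-measure vanishing sets shows $\mu$ is concentrated on $Z(\Ann(\VV))\cap\TT^n$.

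The main obstacle is diffuseness; I plan to deduce it from the stronger claim that each coordinate marginal of $\mu$ is absolutely continuous with respect to normalized Lebesgue measure on $\TT$. The $i$-th marginal $\mu\of\pi_i^{-1}$ is exactly the scalar spectral measure of $\wtil{V}_i$ at $h$. Consider the subspace $M_i=\bigvee_{k\geq 0}\wtil{V}_i^{*k}\Hil\sbse\wtil{\Hil}$. It is $\wtil{V}_i$-reducing and contains $\Hil$, and from its construction $\wtil{V}_i|M_i$ is a minimal unitary extension of the single operator $V_i$. Since $V_i$ is a shift, this extension is a bilateral shift, unitarily equivalent to multiplication by $z$ on $L^2(\TT,m)\otimes\mc{E}_i$ for some auxiliary Hilbert space $\mc{E}_i$; the scalar spectral measure of such a multiplication operator at any vector takes the form $(\sum_j|f_j|^2)\,dm$ and is therefore absolutely continuous. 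Because $M_i$ reduces $\wtil{V}_i$ and contains $h$, the spectral measure of $\wtil{V}_i$ at $h$ coincides with that of $\wtil{V}_i|M_i$ at $h$, hence is absolutely continuous. An atom of $\mu$ at $z_0\in\TT^n$ would project to an atom of $\mu\of\pi_i^{-1}$ at $z_{0,i}$, which is ruled out; therefore $\mu$ is diffuse.
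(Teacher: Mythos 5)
Your construction of $\mu$ as the scalar spectral measure of the minimal unitary extension $\wtil{\VV}$ at a cyclic vector, and the verification of the unitary equivalence $\VV\cong M_{\XX,\mu}$, match the paper exactly. The support argument differs cosmetically: the paper invokes the fact that the Taylor joint spectrum of $\wtil{\VV}$ sits inside $Z(\Ann(\VV))\cap\TT^n$, whereas you argue directly that each generator of $\Ann(\VV)$ vanishes $\mu$-a.e.; the two are interchangeable and yours is arguably the more elementary observation.

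Where you genuinely diverge is in the proof of diffuseness. The paper argues pointwise: if $E(\{z\})f=f$ for some $z\in\TT^n$, then $|\inner{f}{\wtil{\VV}^{*\gra}g}|=|\inner{P_\Hil f}{\VV^\grb g}|$ for all $g\in\Hil$ and $\gra,\grb$, and because $V_1^k g\to 0$ weakly (as $V_1$ is a shift), this common modulus must vanish; minimality of $\wtil{\Hil}$ then forces $f=0$. Your route instead identifies $M_i=\bigvee_{k\geq 0}\wtil{V}_i^{*k}\Hil$ as the minimal unitary extension space of the single shift $V_i$, recognizes $\wtil{V}_i|M_i$ as a bilateral shift, and uses the fact that bilateral shifts have absolutely continuous scalar spectral measures to conclude that each coordinate marginal of $\mu$ is absolutely continuous with respect to Lebesgue measure on $\TT$; in particular no marginal has an atom, so $\mu$ has no atom. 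Both proofs are correct and both ultimately exploit that at least one $V_i$ is a shift. Your argument yields the stronger conclusion of absolutely continuous marginals, which could be of independent use; the paper's argument is somewhat more self-contained in that it avoids the structure theory of the unitary dilation of a shift and reduces everything to a simple weak-convergence estimate.
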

\begin{proof}
  Denote by $v_0$ a cyclic vector for $\VV$. We set $\mc{V}=Z(\Ann(\VV))$, and denote by $\wtil{\VV}$ the minimal unitary extension of $\VV$ and $\grs(\wtil{\VV})$ the Taylor joint spectrum of $\VV$. By \cite[Thm. IV.7.26]{Vasilescu}, there is a projection valued measure $E$ supported on $\grs(\wtil{\VV})$ such that $\wtil{V}_i=\int_{\grs(\wtil{\VV})} z_i dE(z)$ for each $i$. As $\wtil{\VV}|\Hil=\VV$, for any $p,q\in\CC[\XX]$
  \[ \inner{p(\VV)v_0}{q(\VV)v_0}=\int_{\grs(\wtil{\VV})}p(z)\cc{q(z)}d\mu(z) \]
  where $\mu(\cdot)=\inner{E(\cdot)v_0}{v_0}$. It follows from \cite[Thm. III.10.4]{Vasilescu} that $\grs(\wtil{\VV})\sbse \mc{V}\cap\TT^n$.
  
  Suppose $f\in\wtil{\Hil}$ and $z\in\TT^n$ so that $E(\{z\})f=f$. For any $g\in \Hil$ and $\gra,\grb\in\NN_0^n$ we have
  \[ |\inner{f}{\wtil{\VV}^{*\gra}g}|=|\inner{f}{g}|=|\inner{P_\Hil f}{\VV^\grb g}|. \]
  Here $P_\Hil$ is projection onto $\Hil$. Because $V_1^k g$ tends weakly to $0$ as $k\to\infty$, it follows that $\inner{f}{\wtil{\VV}^{*\gra}g}=0$ for any $g\in\Hil$ and $\gra\in\NN_0^n$. Since $\{\wtil{\VV}^{*\gra}g:g\in\Hil,\gra\in\NN_0^n\}$ is dense in $\wtil{\Hil}$, we have $f=0$. Thus $E(\{z\})=0$ and therefore $\mu$ has no atoms. 
\end{proof}

\begin{lemma}\label{NUE:ApplyDecomp}
  Let $\VV$ be a cyclic $n$-tuple of commuting shifts on a Hilbert space $\Hil$, and suppose $\dim Z(\Ann(\VV))=1$. We set $\mc{V}=Z(\Ann(\VV))$, and take $\mu$ to the measure provided by the preceding lemma. Denote by $\mc{V}_1,\dots,\mc{V}_m$ the irreducible components of $\mc{V}$ and $\mc{I}_1,\dots,\mc{I}_m$ the corresponding prime ideals. Suppose $m>1$, and let $\nu$ be another finite diffuse positive Borel measure on $\mc{V}\cap\TT^n$. For each $i\in\{1,\dots,m\}$ we write $\mu_i$ and $\nu_i$ for the restriction of $\mu$ and $\nu$ to $\mc{V}_i\cap\TT^n$, respectively. Then :
  \begin{enumerate}
    \item[\rm{(1)}] $M_{\XX,\mu_i}$ is an $n$-tuple of shifts of finite multiplicity with $\Ann(M_{\XX,\mu_i})=\mc{I}_i$ for $i=1,\dots,n$; and
    \item[\rm{(2)}] if $M_{\XX,\mu_i}\lesssim M_{\XX,\nu_i}$ for each $i$, then $M_{\XX,\mu}\approx M_{\XX,\nu}$.
  \end{enumerate}
\end{lemma}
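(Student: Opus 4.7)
My plan is to prove (1) by realizing $M_{\XX,\mu_i}$ as a finite-codimensional extension of the invariant subspace $\Hil_i^\mu=\cc{\hat{\mc{I}}_i}$ of $P^2(\mu)$, then transferring properties via Lemma \ref{NUE:PrecLem}. First I would check that $\mu_i\neq 0$: otherwise $\hat{\mc{I}}_i\sbse\Ann(M_{\XX,\mu})=\mc{I}$, contradicting the irredundancy of the prime factorization. Because $\mc{V}_i\cap\mc{V}_j$ is a finite subvariety of the $1$-dimensional irreducible $\mc{V}_i$ when $i\neq j$ and $\mu$ is diffuse, the $\mu_i$'s are mutually singular and $\mu=\sum\mu_i$. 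Since $M_{\XX,\mu}$ is unitarily equivalent to $\VV$ (a cyclic $n$-tuple of commuting shifts, of finite multiplicity by Theorem \ref{FinMult:Equiv}), Theorem \ref{CharAnn:MainThm}(2) gives $\Ann(M_{\XX,\mu}|\Hil_i^\mu)=\mc{I}_i$. Because every $q\in\hat{\mc{I}}_i$ lies in each $\mc{I}_j$ with $j\neq i$ and therefore vanishes $\mu_j$-a.e., $\Hil_i^\mu$ embeds naturally into $P^2(\mu_i)\sbse L^2(\mu_i)\hookrightarrow L^2(\mu)$, and on $\Hil_i^\mu$ the operator $M_{\XX,\mu}|\Hil_i^\mu$ agrees with $M_{\XX,\mu_i}|\Hil_i^\mu$.

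The key technical step is showing $\Hil_i^\mu$ has finite codimension in $P^2(\mu_i)$. The image of $\hat{\mc{I}}_i$ in $\CC[\XX]/\mc{I}_i$ is an ideal whose zero set is the finite set $\mc{V}_i\cap\bigcup_{j\neq i}\mc{V}_j$, so this image has finite codimension in $\CC[\XX]/\mc{I}_i$; hence $\CC[\XX]=\hat{\mc{I}}_i+L+\mc{I}_i$ for some finite-dimensional $L\sbse\CC[\XX]$. For any polynomial $p$, write $p=q+\ell+r$ with $q\in\hat{\mc{I}}_i$, $\ell\in L$, $r\in\mc{I}_i$; since $r$ vanishes $\mu_i$-a.e., $p\equiv q+\ell$ in $L^2(\mu_i)$, and taking closures yields $P^2(\mu_i)=\Hil_i^\mu+L_i$ with $\dim L_i<\infty$. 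To finish (1), since $\dim\mc{V}_i=1$ there exists $j$ with $X_j-\grl\nin\mc{I}_i$ for every $\grl\in\CC$ (otherwise $\mc{V}_i$ would be a single point); for such $j$, $\{z\in\mc{V}_i:z_j=\grl\}$ is a proper finite subvariety and hence $\mu_i$-null by diffuseness, so $M_{X_j,\mu_i}$ has no eigenvalues. Lemma \ref{NUE:PrecLem}(2) then applies to $M_{\XX,\mu}|\Hil_i^\mu\lesssim M_{\XX,\mu_i}$—the former being a shift of finite multiplicity as a restriction of $M_{\XX,\mu}$—to give (1).

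For (2), the hypothesis $M_{\XX,\mu_i}\lesssim M_{\XX,\nu_i}$ combined with the analogous no-eigenvalue argument for the diffuse measure $\nu_i$ lets Lemma \ref{NUE:PrecLem}(2) yield $M_{\XX,\mu_i}\approx M_{\XX,\nu_i}$ (both shifts of finite multiplicity with annihilator $\mc{I}_i$), and so $\bigoplus_i M_{\XX,\mu_i}\approx\bigoplus_i M_{\XX,\nu_i}$. Next I would show $M_{\XX,\mu}\approx\bigoplus_i M_{\XX,\mu_i}$: Corollary \ref{NUE:JCor} gives $M_{\XX,\mu}\approx M_{\XX,\mu}|\cc{\mc{J}(M_{\XX,\mu})P^2(\mu)}$ where $\mc{J}=\sum_i\hat{\mc{I}}_i$; by Lemma \ref{CharAnn:Ortho} (applicable since each $\mc{I}_i$ is $\rho$-closed as the annihilator of the isometric tuple $M_{\XX,\mu_i}$) this subspace equals $\bigoplus_i\Hil_i^\mu$, and each summand is $\approx M_{\XX,\mu_i}$ by (1). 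A parallel argument applies to $\nu$: $\cc{\mc{J}(M_{\XX,\nu})P^2(\nu)}=\bigoplus_i\Hil_i^\nu$ has finite codimension in $P^2(\nu)$ by Corollary \ref{Intro:FctrsLem} and cyclicity of the vector $1$, and since no $\mc{I}_i$ contains any $X_j-\grl$ (because $V_j$ is a shift), every $M_{X_j,\nu}$ has no eigenvalues, so Lemma \ref{NUE:PrecLem}(2) yields $M_{\XX,\nu}\approx\bigoplus_i M_{\XX,\nu_i}$. Chaining these equivalences gives $M_{\XX,\mu}\approx M_{\XX,\nu}$. The main obstacle I anticipate is the codimension identity $P^2(\mu_i)=\Hil_i^\mu+L_i$, since it requires carefully translating the algebraic statement that $\hat{\mc{I}}_i\bmod\mc{I}_i$ has finite codimension in $\CC[\XX]/\mc{I}_i$ into a density conclusion for the $L^2(\mu_i)$-closure.
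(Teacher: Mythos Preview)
Your argument is correct, and your overall strategy---reducing everything to repeated applications of Lemma~\ref{NUE:PrecLem} via finite-codimension inclusions---is sound. However, it differs from the paper's route in both parts.

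For (1), the paper does not establish that $\Hil_i^\mu$ has finite codimension in $P^2(\mu_i)$. Instead it picks $p\in\hat{\mc{I}}_1\setminus\mc{I}_1$, observes that $p$ has only finitely many zeros on $\mc{V}_1$ so that $M_{p,\mu_1}$ is injective (by diffuseness), and then uses the remark from Section~2 that if $\ker A=\{0\}$ and $\VV|\cc{\ran A}$ is completely non-unitary then $\VV$ is completely non-unitary. This gives directly that $M_{\XX,\mu_1}$ is completely non-unitary, after which Lemma~\ref{CharAnn:Prime}, Theorem~\ref{PrmIdl:MainThm}, and Theorem~\ref{FinMult:Equiv} finish. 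Your approach instead proves the algebraic fact that $\hat{\mc{I}}_i+\mc{I}_i$ has finite codimension in $\CC[\XX]$ and pushes this to $L^2(\mu_i)$; this is a bit more work up front but has the advantage that the resulting relation $M_{\XX,\mu}|\Hil_i^\mu\lesssim M_{\XX,\mu_i}$ is immediately reusable in part (2).

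For (2), the paper works inside the invariant subspaces: it takes the unitary $U:P^2(\mu_1)\to P^2(\nu_1)$ witnessing $M_{\XX,\mu_1}\lesssim M_{\XX,\nu_1}$, checks that $U\Hil_1$ has finite codimension in $\mc{K}_1$ by writing $\hat{\mc{I}}_1(M_{\XX,\nu_1})P^2(\nu_1)$ in terms of $U$ and a finite basis for the complement, and then invokes Lemma~\ref{NUE:PrecLem} once. Your route is more modular: you first upgrade to $M_{\XX,\mu_i}\approx M_{\XX,\nu_i}$, then separately prove $M_{\XX,\mu}\approx\bigoplus_i M_{\XX,\mu_i}$ and $M_{\XX,\nu}\approx\bigoplus_i M_{\XX,\nu_i}$, and chain through transitivity of $\approx$ and its compatibility with finite direct sums. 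Both arguments rely on the same ingredients (Corollary~\ref{Intro:FctrsLem}, Lemma~\ref{CharAnn:Ortho}, Lemma~\ref{NUE:PrecLem}), but yours avoids tracking the unitary $U$ explicitly at the cost of a few more invocations of $\approx$. One small point: your claim that no $\mc{I}_i$ contains $X_j-\lambda$ deserves a line of justification---it holds because $\mc{I}_i=\Ann(\VV|\Hil_i)$ with $\Hil_i\neq\{0\}$, and $V_j|\Hil_i$, being the restriction of a shift to a nonzero invariant subspace, cannot equal $\lambda I$.
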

\begin{proof}
    Write $\what{\mc{I}}_i=\bigcap_{j\neq i}\mc{I}_j$ for each $i$, and note that if $p\in \what{\mc{I}}_i$ then $p(\mc{V}_j)=0$ whenever $j\neq i$. For any $p,q\in\what{\mc{I}}_i$ we have
  \[ \int p \cc{q}d\mu=\int p\cc{q}d\mu_i. \]
  Thus $M_{\XX,\mu}$ restricted to the $L^2(\mu)$-closure of $\what{\mc{I}}_i$ is equal to $M_{\XX,\mu_i}$ restricted to the $L^2(\mu_i)$-closure $\Hil_i$ of $\what{\mc{I}}_i$. Likewise, $M_{\XX,\nu}$ restricted to the $L^2(\nu)$-closure of $\what{\mc{I}}_i$ is equal to $M_{\XX,\nu_i}$ restricted to the $L^2(\nu_i)$-closure $\mc{K}_i$ of $\what{\mc{I}}_i$.
  
  \vspace{10pt}
  
  \noindent (1) Since the argument is essentially identical for the other components of $\mc{V}$, we consider only $i=1$. Let $p\in \what{\mc{I}}_1\bksl \mc{I}_1$ and note that $p$ has only finitely many zeros on $\mc{V}_1$. Because $\mu_1$ has no atoms, $M_{p,\mu_1}$ is injective. Thus, as $M_{\XX,\mu}|\cc{\ran M_{p,\mu}}=M_{\XX,\mu_1}|\cc{\ran M_{p,\mu_1}}$ is completely non-unitary, it follows that $M_{\XX,\mu_1}$ is completely non-unitary. Since $\Ann(M_{\XX,\mu_1})\spse\mc{I}_1$, Lemma \ref{CharAnn:Prime} implies that $\Ann(M_{\XX,\mu_1})=\mc{I}_1$, and so by Theorem \ref{PrmIdl:MainThm} we see that $M_{\XX,\mu_1}$ is an $n$-tuple of shifts. As $M_{\XX,\mu_1}$ is also cyclic, Theorem \ref{FinMult:Equiv} dictates that each $M_{X_j,\mu_1}$ has finite multiplicity.
  
  \vspace{10pt}
  
  \noindent (2) By the comments that follow Corollary \ref{NUE:JCor}, it suffices to prove that $M_{\XX,\mu_i}|\Hil_i$ and $M_{\XX,\nu_i}|\mc{K}_i$ are nearly unitarily equivalent for each $i$. Again, we only consider $i=1$.
  
  Since $M_{\XX,\mu_1}\lesssim M_{\XX,\nu_1}$, there is a unitary operator $U$ from $P^2(\mu_1)$ onto a finite codimensional subspace of $P^2(\nu_1)$ so that $UM_{X_j,\mu_1}=M_{X_j,\nu_1}U$ for each $j\in\{1,\dots,n\}$. Let $g_1,\dots,g_N$ be a basis for $P^2(\nu_1)\ominus UP^2(\mu_1)$, and let $p_1,\dots,p_k$ generate $\what{\mc{I}}_1$. We note that $\Hil_1$ is the closure of $\sum_{j=1}^k p_j(M_{\XX,\mu_1})P^2(\mu_1)$, and $\mc{K}_1$ is the closure of $\sum_{j=1}^k p_j(M_{\XX,\nu_1})P^2(\nu_1)$. Thus, as
  \[ \sum_{j=1}^k p_j(M_{\XX,\nu_1})P^2(\nu_1)=\sum_{j=1}^k\sum_{\ell=1}^N \CC\cdot p_j(M_{\XX,\nu_1})g_\ell+U\left(\sum_{j=1}^k p_j(M_{\XX,\mu_1})P^2(\mu_1)\right), \]
  it follows that $U\Hil_1$ has finite codimension in $\mc{K}_1$, and so
  \[ M_{\XX,\mu_1}|\mc{H}_1\lesssim M_{\XX,\nu_1}|\mc{K}_1. \]
  Because $M_{\XX,\mu_1}|\mc{H}_1$ is an $n$-tuple of shifts with annihilator $\mc{I}_1$ and a finite cyclic set, it follows from Theorem \ref{FinMult:Equiv} that $M_{X_i,\mu_1}|\mc{H}_1$ has finite multiplicity for each $i$. Since $\nu_1$ has no atoms, it follows that no element of $M_{\XX,\nu_1}|\mc{K}_1$ is multiplication by a scalar, and thus $M_{\XX,\mu_1}|\Hil_1\approx M_{\XX,\nu_1}|\mc{K}_1$ by Lemma \ref{NUE:PrecLem}.
  \end{proof} 
  
  Thus we restrict ourselves to the case where $\Ann(\VV)$ is prime and $\VV$ is of the form $M_{\XX,\mu}$, for $\mu$ a diffuse finite positive Borel measure on $Z(\Ann(\VV))\cap\TT^n$. In what follows, we use the following properties of $\mc{V}=Z(\Ann(\VV))$.
  \begin{enumerate}
    \item[\rm{(1)}] $\mc{V}$ is a distinguished variety; and
    \item[\rm{(2)}] if $z\in \mc{V}$ and $z_i\neq 0$ for each $i$, then $(1/\cc{z}_1,\dots,1/\cc{z}_n)\in \mc{V}$.
  \end{enumerate}
  Property (1) follows from Theorem \ref{PrmIdl:MainThm}, while property (2) follows from comments in section \ref{Sec:CharAnn}.
  
  
  Before moving onto the desingularization process, we make the following observation about the set $\mc{V}\cap\TT^n$, where we denote by $\mc{V}^*$ the regular set of $\mc{V}$.
  
\begin{lemma}\label{NUE:VcapT}
  $\mc{V}\cap\TT^n=\cc{\mc{V}\cap\DD^n}\bksl(\mc{V}\cap\DD^n)$, and for each point $y\in\mc{V}^*\cap\TT^n$ there is a neighborhood $U$ of $y$ in $\CC^n$ such that $\mc{V}\cap\TT^n\cap U$ is a simple smooth curve.
\end{lemma}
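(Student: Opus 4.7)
The argument will use two structural facts about $\mc{V}$: the distinguished-variety property (on any point of $\mc{V}$, all coordinates lie in the same one of $\DD$, $\TT$, $\EE$), and that no coordinate projection $z\mapsto z_i$ is constant on $\mc{V}$, since otherwise $X_i-\grl\in\Ann(\VV)$ for some $\grl$ would force $V_i$ to be a scalar multiple of the identity, contradicting the standing assumption that $\VV$ is an $n$-tuple of shifts with prime annihilator.

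For the set equality, the inclusion $\cc{\mc{V}\cap\DD^n}\bksl(\mc{V}\cap\DD^n)\sbse\mc{V}\cap\TT^n$ is immediate: a point $y$ of the left side belongs to $\mc{V}\cap\cc{\DD^n}$ but not to $\DD^n$, so some coordinate of $y$ has modulus one, and the distinguished property pushes all coordinates of $y$ onto $\TT$. For the reverse inclusion I would fix $y\in\mc{V}\cap\TT^n$ and select a local analytic parametrization $\psi:\DD\to\mc{V}$ of a branch of $\mc{V}$ at $y$ with $\psi(0)=y$, whose existence at every point (regular or singular) is supplied by Puiseux's theorem together with the fact that the singular locus of an irreducible one-dimensional variety is finite. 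If $\psi_1$ were identically $y_1$, the image $\psi(\DD)$ would lie in the hyperplane $\{z_1=y_1\}$; as $\psi(\DD)$ is a one-dimensional analytic subset of the irreducible curve $\mc{V}$, its Zariski closure would be $\mc{V}$, forcing $X_1-y_1\in\Ann(\VV)$ in contradiction with the remark above. Hence $\psi_1$ is non-constant, making $\log|\psi_1|$ a non-constant harmonic function vanishing at $0$; the open-mapping property of non-constant harmonic functions then produces $t$ arbitrarily close to $0$ with $|\psi_1(t)|<1$, at which distinguishedness promotes the inequality to $\psi(t)\in\DD^n$. Thus $y\in\cc{\mc{V}\cap\DD^n}$.

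For the second assertion, fix $y\in\mc{V}^*\cap\TT^n$ and pick a biholomorphic parametrization $\psi$ from $\DD$ onto an open neighborhood of $y$ in $\mc{V}^*$ with $\psi(0)=y$. Since $\psi$ is a holomorphic immersion, $\psi'(0)\in\CC^n$ is non-zero, so after a permutation of the coordinates of $\CC^n$ I may assume $\psi_1'(0)\neq 0$. A direct Taylor expansion gives
\[|\psi_1(t)|^2-1=2\,\mathrm{Re}\bigl(\psi_1'(0)\,\cc{y_1}\,t\bigr)+O(|t|^2),\]
whose linear part is a non-vanishing real-linear functional in $t=s+ir$; the implicit function theorem then produces a simple smooth real-analytic arc $\gamma\sbse\DD$ through $0$ on which $|\psi_1|\equiv 1$. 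Distinguishedness identifies $\gamma$ with $\psi^{-1}(\mc{V}\cap\TT^n\cap\psi(\DD))$, and the biholomorphic image $\psi(\gamma)$ is the required simple smooth curve in $\mc{V}\cap\TT^n$ near $y$. The only truly delicate step in the whole argument is the non-constancy of $\psi_1$ used in the second paragraph; this is where the algebraic primality/shift hypothesis on $\Ann(\VV)$ is converted, via Zariski closure of an analytic germ, into an analytic statement about local branches of $\mc{V}$.
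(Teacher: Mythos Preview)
Your proof is correct, and it takes a genuinely different route from the paper's. For the inclusion $\mc{V}\cap\TT^n\sbse\cc{\mc{V}\cap\DD^n}$, the paper splits into cases: at a regular point $z$ it uses the implicit function theorem to write $\mc{V}$ locally as a graph $w\mapsto(\phi(w),w)$ over one coordinate, and then simply lets $w\to z_n$ through $\DD$; at a singular point it approximates by regular points and invokes the reflection symmetry $z\mapsto 1/\cc{z}$ of $\mc{V}$ to force the approximating sequence into $\cc{\DD^n}$. Your argument instead parametrizes a local branch via Puiseux at every point and uses the open mapping property of the harmonic function $\log|\psi_1|$ to produce nearby points in $\DD^n$, treating regular and singular points uniformly. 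This is exactly the alternative the paper alludes to in the remark following its proof, and it has the advantage of not needing the reflection property; the cost is importing the Puiseux/branch parametrization, which the paper only develops later in its desingularization section.

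For the smooth-curve statement the difference is smaller: the paper's graph parametrization $w\mapsto(\phi(w),w)$ already has one coordinate equal to the parameter, so $\mc{V}\cap\TT^n$ is visibly $\{(\phi(e^{it}),e^{it})\}$; you instead work in an arbitrary biholomorphic chart and recover the curve by applying the real implicit function theorem to $|\psi_1|^2-1$. Both are straightforward. One small point worth making explicit in your write-up: the non-constancy of $\psi_1$ follows most cleanly from the fact that $\mc{V}\cap\{z_1=y_1\}$ is a proper subvariety of the irreducible one-dimensional $\mc{V}$ and hence finite (Proposition~\ref{Fact:ProperSubvar}), so the infinite set $\psi(\DD)$ cannot sit inside it; this is the content of your Zariski-closure remark, but phrasing it this way avoids any question about what ``Zariski closure of an analytic germ'' means.
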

\begin{proof}
  Plainly $\mc{V}\cap\cc{\DD^n}\spse \cc{\mc{V}\cap\DD^n}$. Since $\mc{V}$ is distinguished, it follows that $\mc{V}\cap\TT^n\spse \cc{\mc{V}\cap\DD^n}\bksl(\mc{V}\cap\DD^n)$ and thus it suffices to show that $\mc{V}\cap\TT^n$ is contained in $\cc{\mc{V}\cap\DD^n}$. Let $z\in\mc{V}\cap\TT^n$, and first suppose that $z$ is a regular point of $\mc{V}$. By the implicit function theorem it follows, possibly after a permutation of coordinates, that there exists a disc $\grD_n$ about $z_n$ in $\CC$, a polydisc $\grD'$ about $(z_1,\dots,z_{n-1})$ in $\CC^{n-1}$, and a holomorphic function $\phi:\grD_n\to\grD'$ so that
  \[ \mc{V}\cap(\grD'\times\grD_n)=\{(\phi(w),w):w\in\grD_n\}. \]
  Let $w_1,w_2,\dots \in \grD_n\cap\DD$ so that $w_i\to z_n$. As $\mc{V}$ is distinguished, $(\phi(w_i),w_i)\in\mc{V}\cap\DD^n$ for each $i$ and thus $z$ is a limit point of $\mc{V}\cap\DD^n$. We also observe that the preceding parametrization shows $\mc{V}\cap\TT^n\cap(\grD'\times\grD_n)$ to be of the form $\{(\phi(e^{it}),e^{it}):a<t<b\}$ for some $a,b\in\RR$. That is, $\mc{V}\cap\TT^n\cap(\grD'\times\grD_n)$ is a simple smooth curve through $z$.
  
  Now suppose that $z$ is a singular point and $z^{(1)},z^{(2)},\dots\in\mc{V}^*$ are such that $z^{(i)}\to z$. Since $w\in\mc{V}$ implies $(1/\cc{w}_1,\dots,1/\cc{w}_n)\in \mc{V}$, we assume that $z^{(i)}\in\cc{\DD^n}$ for each $i$. Since each point of $\mc{V}^*\cap\TT^n$ is a limit point of $\mc{V}\cap\DD^n$, whenever $z^{(i)}\in \TT^n$ we can replace $z^{(i)}$ with a point in $\DD^n$ that is within a distance $1/i$ of the original point. Thus we produce a sequence in $\mc{V}\cap\DD^n$ converging to $z$.
\end{proof}

We remark that the last part of the proof can also be proved using the parametrization of $\mc{V}$ given by the desingularization discussed below without using the fact that $z\in \mc{V}\cap (\CC\bksl\{0\})^n$ implies $1/\cc{z}\in\mc{V}$.
  
  \subsection{Desingularizing $\mc{V}\cap\cc{\DD^n}$}\label{DesingSubsec}

  Our next objective is to construct a Riemann surface $\mc{S}$ with boundary, and a continuous map $h$ from $\cc{\mc{S}}$ onto $\mc{V}\cap\cc{\DD^n}$ so that $\pd\mc{S}=h^{-1}(\mc{V}\cap\TT^n)$ and $h$ is injective over $\mc{V}^*\cap\cc{\DD^n}$. Suppose we have such a map and let $\mu$ be a diffuse finite positive Borel measure on $\mc{V}\cap\TT^n$, as in Lemma \ref{NUE:GetMu}. Since $h$ is injective on the complement of a finite set, the pullback measure $\nu=\mu\of h$ is also a diffuse finite positive Borel measure and
  \[ \int_{\mc{V}\cap\TT^n}p\cc{q}d\mu=\int_{\pd\mc{S}} (p\of h)\cc{(q\of h)}d\nu, \quad p,q\in\CC[\XX]. \]
  Given a finite positive Borel measure $\tau$ on $\pd\mc{S}$, we denote by $A^2_h(\tau)$ the $L^2(\tau)$-closure of $\CC[h_1,\dots,h_n]$. If $f$ is a bounded Borel function on $\pd\mc{S}$, then we write $M_{f,\tau}$ for multiplication by $f$ on $A^2_h(\tau)$. Thus the preceding comments show that $M_{\XX,\mu}$ is unitarily equivalent to $M_{h,\nu}=(M_{h_1,\nu},\dots,M_{h_n,\nu})$.
  
  After constructing $\mc{S}$ and $h$, we find a diffuse finite positive Borel measure $\grw$ on $\pd\mc{S}$ so that $M_{h,\grw}$ is an $n$-tuple of shifts with annihilator $\mc{I}$. We then show that whenever $\nu$ is as above, then $M_{h,\nu}\approx M_{h,\grw}$. Once this done in Lemma \ref{NUE:KKLemma}, Theorem \ref{FinalSec:MainThm} is proved.
  
  The construction of $\mc{S}$ requires two steps. First we desingularize $\mc{V}$ to produce a Riemann surface $\mc{M}$ and a holomorphic map $H$ from $\mc{M}$ onto $\mc{V}$. The set $\mc{R}=H^{-1}(\mc{V}\cap\DD^n)$ may not, as a subset of $\mc{M}$, have the structure of a Riemann surface with boundary. Thus in the second step we embed $\mc{R}$ into another Riemann surface where $\mc{R}$ has the appropriate structure; we call this set $\mc{S}$.

We note that desingularization often requires a rotation of the variety before the surface can be constructed. In our case, however, we wish to use the fact that $\mc{V}$ is a distinguished variety, and a generic rotation may not preserve this property of $\mc{V}$. We therefore carry out the desingularization $\mc{V}$ without such rotations, and thus require some material from the theory of analytic sets. For this, we follow Chapter I of \cite{Chirka}. We say that $A\sbse \CC^n$ is an \textit{analytic subset of $\CC^n$} if for each point $x\in A$ there is a neighborhood $U$ about $x$ and functions $f_1,\dots,f_N$ holomorphic on $U$ such that $A\cap U=\{z\in U:f_1(z)=\dots=f_N(z)=0\}$. An analytic set $A$ is \textit{irreducible} if it cannot be written as the union of two analytic subsets distinct from $A$. Every analytic set is the union of an at most countable number of irreducible analytic sets, which are called the \textit{irreducible components} of $A$. Near any point $x$ in an analytic set $A$, there is a neighborhood $U'$ of $x$ so that $A\cap U'$ can be written as a finite union of irreducible analytic sets.  We note that any algebraic variety is an analytic set, but the restriction of an irreducible algebraic variety to an open subset may not be an irreducible analytic set. As an example, consider the complex curve $X_2^2=X_1^2(1+X_1)$ near the origin.

Recall that a map $f$ from a topological space $X$ into a topological space $Y$ is \textit{proper} if $f^{-1}(K)$ is compact whenever $K$ is a compact subset of $Y$. In addition to the preceding, we use the following results. The first of these follows from \cite[\S 1.3.1]{Chirka}, while the second is a special case of results in \cite[\S 1.3.2]{Chirka}.

\begin{proposition}\label{NUE:ChirkaPropLem}
  Assume that $X$ and $Y$ are Hausdorff, locally compact spaces, and $D\sbs X$, $G\sbs Y$ with $\cc{G}$ compact. If $A$ is a relatively closed subset of $D\times G$ that does not have limit points in $D\times \pd G$, then the restriction of the projection $(x,y)\mapsto x$ to $A$ is a proper map.
\end{proposition}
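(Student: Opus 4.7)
My approach is to fix a compact set $K\sbse D$ and show that the preimage $\pi|_A^{-1}(K)=A\cap(K\times G)$ is compact, where $\pi\colon X\times Y\to X$ denotes the first-coordinate projection. Since $\cc{G}$ is compact, so is $K\times\cc{G}$, and $\pi|_A^{-1}(K)$ sits inside this product. Hence it suffices to prove that $\pi|_A^{-1}(K)$ is closed as a subset of $K\times\cc{G}$.

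To check closedness, I would take a net $(x_\alpha,y_\alpha)$ in $A\cap(K\times G)$ converging to a point $(x,y)\in K\times\cc{G}$ and verify that $(x,y)\in A\cap(K\times G)$. Clearly $x\in K\sbse D$, so the only question is where $y$ lands. If $y\in G$, then $(x,y)\in D\times G$ is the limit of the net $(x_\alpha,y_\alpha)\in A$ within the subspace $D\times G$, so the relative closedness of $A$ in $D\times G$ forces $(x,y)\in A$. If instead $y\in\cc{G}\bksl G$, then $y\in\pd G$, so $(x,y)$ is a limit point of $A$ lying in $D\times\pd G$, contradicting the stated hypothesis. Thus the second case is vacuous, and in the first we obtain $(x,y)\in A\cap(K\times G)$, as required.

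I anticipate no genuine obstacles. The Hausdorff and local compactness hypotheses serve only to ensure that $K\times\cc{G}$ is compact and that closedness may be tested via nets. The two clauses of the hypothesis---that $A$ is relatively closed in $D\times G$ and that $A$ has no limit points in $D\times\pd G$---dovetail to exclude precisely the two possible modes of escape for a net in $A\cap(K\times G)$: limits with $y\in G$ are absorbed by relative closedness, while limits with $y\in\pd G$ are forbidden outright.
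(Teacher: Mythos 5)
Your proof is correct and is the natural argument; the paper itself cites \cite[\S 1.3.1]{Chirka} for this proposition and offers no proof of its own, so there is no in-paper argument to compare against. Two small points worth making explicit: first, the net $(x_\alpha,y_\alpha)$ satisfies $y_\alpha\in G$ while $y\in\cc{G}\bksl G$, so the terms of the net are distinct from the limit $(x,y)$, which is what makes $(x,y)$ a limit point (accumulation point) of $A$ rather than possibly a point of $A$ itself; second, the inclusion $\cc{G}\bksl G\sbse \pd G$ that you use holds for any $G$, open or not, since $\pd G=\cc{G}\bksl\operatorname{int}G\spse\cc{G}\bksl G$. With those remarks the argument is airtight: the fiber $A\cap(K\times G)$ sits inside the compact set $K\times\cc{G}$, and the two hypotheses exactly rule out the two ways a limit could escape $A\cap(K\times G)$.
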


\begin{proposition}\label{NUE:ChirkaMapLem}
    Let $G=G'\times G''$, where $G'\sbs\CC^p,G''\sbs \CC^m$ are open, set $n=p+m$, and let $\pi:(z',z'')\mapsto z'$. If $A$ is an analytic subset of $G$ such that $\pi|A$ is a proper map, then $\pi(A)$ is an analytic subset of $G'$. Furthermore, if $\pi$ is also finite, then $\dim A=\dim \pi(A)$.
\end{proposition}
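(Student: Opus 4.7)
The plan is to follow the standard argument for Remmert's proper mapping theorem, proving analyticity of $\pi(A)$ by local elimination of the $z''$-variables via Weierstrass preparation, and then obtaining the dimension statement from the fact that finite proper holomorphic maps preserve dimension at regular points.

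For the analyticity part, the claim is local at each $w_0 \in \pi(A)$. Properness of $\pi|A$ makes $\pi(A)$ closed in $G'$ and forces the fiber $F := A \cap (\{w_0\} \times G'')$ to be a compact analytic subset of $\{w_0\} \times G''$, hence a finite union of irreducible compact analytic germs. I would cover $F$ by finitely many polydisc neighborhoods $U_1,\dots,U_r \subset G$ on which $A$ is cut out by holomorphic functions $f_{j,1},\dots,f_{j,N_j}$. After a linear change of the $z''$-coordinates on each $U_j$ arranged so that the $f_{j,k}$ are regular in the last $z''$-variable, the Weierstrass preparation theorem rewrites each $f_{j,k}$ as a unit times a Weierstrass polynomial in that variable with coefficients holomorphic in $z'$ and in the remaining $z''$-variables. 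Iteratively eliminating each $z''$-variable by taking resultants of these Weierstrass polynomials then yields finitely many holomorphic functions on a common neighborhood $U' \subset G'$ of $w_0$ whose simultaneous vanishing describes $\pi(A \cap (U_1 \cup \cdots \cup U_r)) \cap U'$. Shrinking $U'$ using properness — so that $A \cap \pi^{-1}(U') \subset U_1 \cup \cdots \cup U_r$ — one then has a holomorphic defining system for $\pi(A)$ near $w_0$.

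For the dimension statement, assume in addition that $\pi|A$ is finite, so every fiber of $\pi|A$ is discrete. At a regular point $z_0 \in A$ lying over a regular point $w_0 \in \pi(A)$, discreteness of the fiber forces the differential $d(\pi|A)_{z_0} : T_{z_0}A \to T_{w_0}\pi(A)$ to be injective, since any nonzero vector in its kernel would extend to a local holomorphic curve in $A$ projecting constantly to $w_0$, contradicting isolation of $z_0$ in its fiber. Hence $\pi|A$ is a local biholomorphism onto its image near $z_0$, giving $\dim_{z_0} A = \dim_{w_0}\pi(A)$. Since the dimension of an analytic set is computed at regular points, this gives $\dim A = \dim \pi(A)$.

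The hard part will be running the elimination carefully enough that the resulting resultants cut out exactly $\pi(A)$ rather than some larger analytic set: this requires decomposing $A$ at each point of $F$ into its local irreducible branches, applying Weierstrass preparation on each branch separately, and then verifying that the resultants of the resulting distinguished polynomials vanish precisely on the images of those branches. Properness is what lets the finite local data over $F$ be assembled into a single defining system for $\pi(A)$ on a neighborhood of $w_0$.
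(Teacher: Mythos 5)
The paper does not prove this proposition at all: it is presented as ``a special case of results in \cite[\S 1.3.2]{Chirka}'', i.e.\ a citation to Remmert's proper mapping theorem, so there is no argument in the paper to compare against. Your sketch of the analyticity part is the standard textbook route (local irreducible decomposition of the germs over the fiber, Weierstrass preparation, elimination by resultants, and properness to assemble the local data into a defining system), and that outline is fine.

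The dimension argument, however, contains a genuine error. You claim that discreteness of the fiber forces the differential $d(\pi|A)_{z_0}:T_{z_0}A\to T_{w_0}\pi(A)$ to be injective at a regular point $z_0$, because a kernel vector ``would extend to a local holomorphic curve in $A$ projecting constantly to $w_0$.'' This inference is false. A tangent vector in the kernel of $d\pi$ need not integrate to a curve on which $\pi$ is constant; it only says $\pi$ is constant to first order in that direction. The basic counterexample already lives in the setting of the proposition: take $G'=G''=\CC$, $A=\{(w,z)\in\CC^2: w=z^2\}$, and $\pi(w,z)=w$. Then $A$ is smooth, $\pi|A$ is proper with fibers of at most two points, yet at $(0,0)$ the tangent space $T_{(0,0)}A$ is spanned by $(0,1)$ and $d\pi(0,1)=0$, so the differential is not injective — even though the fiber over $0$ is the single point $(0,0)$. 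So ``finite $\Rightarrow$ immersion at regular points'' is simply not true; the differential can drop rank along the ramification locus. Consequently your conclusion $\dim_{z_0}A=\dim_{w_0}\pi(A)$ does not follow from the argument as written. A correct route is to extract the dimension statement from the same elimination you set up for the analyticity part: Weierstrass preparation over a small neighborhood of a point of $\pi(A)$ exhibits $A$ locally as a branched cover of $\pi(A)$ (the $z''$-coordinates satisfy monic polynomial equations whose coefficients are holomorphic on $\pi(A)$), and a finite branched cover has the same dimension as its base. Alternatively, one can argue $\dim\pi(A)\le\dim A$ directly from the analyticity of $\pi(A)$ and then get $\dim A\le\dim\pi(A)$ from the fiber-dimension inequality $\dim_a A\le\dim_{\pi(a)}\pi(A)+\dim_a(\pi|A)^{-1}(\pi(a))$, whose right summand vanishes when fibers are finite. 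Either way, some mechanism other than pointwise injectivity of the differential is needed.
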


Let $U=U_1\times U'$ be a polydisc in $\CC^n$ centered at $0$ with $U_1\sbse \CC$ and $U'\sbse \CC^{n-1}$. Suppose that $A$ is an irreducible 1-dimensional analytic subset of $U$ and 0 is the only singular point of $A$. If $\pi:A\cap U\to U_1$ is a proper projection onto $U_1$, it follows from \cite[\S 1.3.7]{Chirka} that there is an integer $k>0$ so that $\pi|A\cap U\bksl\{0\}$ is a locally biholomorphic $k$-sheeted cover of $U_1\bksl\{0\}$. Starting from this, it is shown in \cite[\S 1.6]{Chirka} (see also \cite[\S 1.2]{Kollar}) that there is a number $\eta>0$ and a holomorphic homeomorphism $\grs$ from $\DD$ onto $S\cap U$ such that $\pi(\grs(w))=\eta w^k$ for each $w\in\DD$ while $\grs$ sends $\DD\bksl\{0\}$ biholomorphically onto $S\cap U\bksl\{0\}$. In order to produce the appropriate projection, we prove the following.

\begin{lemma}\label{NUE:ProjLem}
  Suppose $0\in\mc{V}$, and let $U$ be a neighborhood of $0$. Denote by $\pi$ the projection $(z_1,\dots,z_n)\mapsto z_1$. If $S$ is an irreducible analytic component of $\mc{V}\cap U$ containing $0$, then there is a polydisc $\grD$ centered at $0$ so that $\pi|(S\cap\grD)$ is a proper projection onto the disc $\pi(\grD)$.
\end{lemma}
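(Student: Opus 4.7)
The plan is to build the polydisc $\grD=\grD_1\times\grD'$ in two steps: first choose $\grD'\sbse\CC^{n-1}$ small enough that the fiber of $S$ above $z_1=0$ meets $\cc{\grD'}$ only at the origin, then choose a disc $\grD_1\sbse\CC$ small enough to prevent $S$ from meeting the lateral boundary $\grD_1\times\pd\grD'$. Once this is arranged, properness will follow from Proposition \ref{NUE:ChirkaPropLem}, and surjectivity of $\pi|(S\cap\grD)$ onto $\grD_1$ from Proposition \ref{NUE:ChirkaMapLem}.

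The crucial preliminary step is to verify that $S$ is not contained in the hyperplane $H=\{z\in\CC^n:z_1=0\}$; this is where the distinguished hypothesis on $\mc{V}$ enters. Since $S$ is a 1-dimensional irreducible local analytic component, it is contained in some (unique) irreducible algebraic component $\mc{V}'$ of $\mc{V}$ passing through $0$. If $\mc{V}'\sbse H$, then every point of $\mc{V}'$ has first coordinate $0\in\DD$, and the distinguished condition $\mc{V}\sbse\DD^n\cup\TT^n\cup\EE^n$ forces $\mc{V}'\sbse\DD^n$, making $\mc{V}'$ bounded. But an irreducible $1$-dimensional algebraic variety in $\CC^n$ is unbounded, a contradiction. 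Hence $\mc{V}'\cap H$ is a proper subvariety of $\mc{V}'$, so by Proposition \ref{Fact:ProperSubvar} its dimension is $0$; in particular $S\cap H$ is discrete in $S$, with $0$ as an isolated point.

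Using this I can pick a polydisc $\grD'\sbse\CC^{n-1}$ about $0$ with $\{0\}\times\cc{\grD'}\sbse U$ and
\[ S\cap(\{0\}\times\cc{\grD'})=\{0\}. \]
If no disc $\grD_1\sbse\CC$ about $0$ satisfied $S\cap(\grD_1\times\pd\grD')=\emptyset$, then for each $k\in\NN$ there would be a point $z^{(k)}\in S$ with $|z^{(k)}_1|<1/k$ and $(z^{(k)}_2,\dots,z^{(k)}_n)\in\pd\grD'$; compactness of $\pd\grD'$ would yield a subsequential limit $z^*\in S$ (using that $S$ is closed in $U$) lying in $\{0\}\times\pd\grD'$, contradicting the display. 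Thus a valid $\grD_1$ exists, and I set $\grD=\grD_1\times\grD'$.

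With $A:=S\cap\grD$ relatively closed in $\grD_1\times\grD'$ and free of limit points in $\grD_1\times\pd\grD'$, Proposition \ref{NUE:ChirkaPropLem} gives that $\pi|A$ is proper. Because $\pi$ is non-constant on the irreducible 1-dimensional set $S$, no fiber of $\pi|A$ can be 1-dimensional, and combining this with properness shows $\pi|A$ is finite. Proposition \ref{NUE:ChirkaMapLem} then asserts that $\pi(A)$ is an analytic subset of $\grD_1$ of dimension $1$, and since $\grD_1$ is itself a 1-dimensional connected complex manifold this forces $\pi(A)=\grD_1$. The main obstacle throughout is the first step: distinguishedness is essential to rule out a local component lying inside a coordinate hyperplane, and one must take some care with the distinction between the irreducible algebraic components of $\mc{V}$ and its local analytic components at $0$.
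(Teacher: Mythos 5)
Your proof is correct and takes essentially the same route as the paper: use the distinguished hypothesis to see that $S$ meets the hyperplane $\{z_1=0\}$ discretely near the origin, choose a polydisc so that $S$ avoids the lateral boundary, then apply Propositions \ref{NUE:ChirkaPropLem} and \ref{NUE:ChirkaMapLem} for properness and surjectivity. The one minor difference is in the first step, where you pass through the irreducible algebraic component of $\mc{V}$ containing $S$ and argue by contradiction that it cannot lie in the hyperplane (since a $1$-dimensional irreducible affine variety is unbounded), whereas the paper observes directly that $\mc{V}\cap(\{0\}\times\CC^{n-1})$ lies in $\cc{\DD^n}$ by distinguishedness and is therefore a compact, hence finite, variety.
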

\begin{proof}
  Let $L=\{0_1\}\times \CC^{n-1}$ where here $0_1$ denotes the origin in $\CC$. Since $\mc{V}$ is a distinguished variety, it follows that $\mc{V}\cap L$ is a compact variety and thus a finite set. By shrinking $U$, we arrange that $\mc{V}\cap L\cap U$ contains only the point 0. In particular, we have $S\cap L=\{0\}$.  Since $S$ is a relatively closed subset of $U$, there is a polydisc $D=D_1\times D'$ centered at 0, with $D'\sbse \CC^{n-1}$ and $D_1\sbse \CC$, so that $D\sbse U$ and $S$ is bounded away from $ \cc{D_1}\times \pd D'$. By Proposition \ref{NUE:ChirkaPropLem}, $\pi|(S\cap D)$ is a proper map into $D_1$.
  
  The analytic set $S\cap D$ has dimension 1, and so it follows from Proposition \ref{NUE:ChirkaMapLem} that $\pi(S\cap D)$ has dimension 1. But the only analytic subsets of $\CC$ with dimension 1 are open subsets. In particular, there is an open disc $\grD_1$ centered at $0_1$ in $\CC$ that is contained in $\pi(S\cap D)$. We set $\grD=\grD_1\times D'$, and claim that $\pi|S\cap\grD$ is also proper. Indeed, if $K$ is a compact subset of $\pi(S\cap\grD)=\grD_1$, then $K$ is also a compact subset of $\pi(S\cap D)$ and thus $(\pi|S\cap D)^{-1}(K)$ is compact. However, $(\pi|S\cap D)^{-1}(K)$ is subset of $S\cap \grD$, and thus $(\pi|S\cap\grD)^{-1}(K)=(\pi|S\cap D)^{-1}(K)$ is compact.
\end{proof}

We can now produce the desingularization $H:\mc{M}\to\mc{V}$. Let $\grS$ denote the set of singular points of $\mc{V}$. For each $z\in \grS$, we fix a neighborhood $U_z$ of $z$ such that $U_z\cap\mc{V}$ has finitely many irreducible analytic components, any two of which meet only at $z$, and $U_z\cap\grS=\{z\}$. For each irreducible analytic component $S$ of $U_z\cap\mc{V}$, let $D_{z,S}$ be a copy of $\DD$ and denote by $x_{z,S}$ the center of $D_{z,S}$. By Lemma \ref{NUE:ProjLem} and the comments preceding it, there is a polydisc $\grD_{z,S}$ centered at $z$ and a holomorphic homeomorphism $\grs_{z,S}$ from $D_{z,S}$ onto $S\cap\grD_{z,S}$ such that $\grs_{z,S}$ sends $D_{z,S}\bksl\{x_{z,S}\}$ biholomorphically onto $S\cap\grD_{z,S}\bksl\{z\}$. Furthermore, there is an integer $k>0$ and a number $\eta>0$ so that $\pi(\grs_{z,S}(w))=z_n+\eta w^k$ for each $w\in \DD$, where $\pi$ is projection onto the $n$-th coordinate. Let $X$ be the disjoint union of $\mc{V}^*$ with all of the $D_{z,S}$, and define $G:X\to \mc{V}$ by setting $G(x)=x$ when $x\in\mc{V}^*$ and $G(x)=\grs_{z,S}(x)$ if $x\in D_{z,S}$. Finally, we define $\mc{M}$ to be the Riemann surface that results from identifying $x,x'\in X$ when $G(x)=G(x')$ and denote by $H$ the resulting map from $\mc{M}$ onto $\mc{V}$. Note that $H$ is a proper finite holomorphic map and $H$ is biholomorphic over $\mc{V}^*$.

Our next objective is to describe the set $\mc{R}=H^{-1}(\mc{V}\cap\DD^n)$.

\begin{lemma}\label{NUE:Connect}
  $\mc{R}$ is connected.
\end{lemma}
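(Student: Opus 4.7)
The plan is to prove connectedness of $\mc{R}$ by a path-reflection argument using the anti-holomorphic involution $\iota\colon z\mapsto 1/\cc{z}$ lifted to $\mc{M}$ through the desingularization.

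First I would observe that $\mc{M}$ itself is connected. Since $\Ann(\VV)$ is prime, $\mc{V}$ is an irreducible algebraic variety and its regular locus $\mc{V}^*=\mc{V}\bksl\grS$ is a connected complex manifold; because $H$ is biholomorphic over $\mc{V}^*$ and $H^{-1}(\grS)$ is finite, $\mc{M}$ is connected. Next, by property (2) of the list preceding the lemma, the map $\iota\colon z\mapsto 1/\cc{z}$ is a well-defined anti-holomorphic involution of $\mc{V}\bksl F_0$, where $F_0=\{z\in\mc{V}:z_i=0\text{ for some }i\}$ is a finite subset of $\mc{V}\cap\DD^n$. It lifts through $H$ to an anti-holomorphic involution $\iota_\mc{M}$ of $\mc{M}\bksl F_0'$ (with $F_0'=H^{-1}(F_0)\sbse\mc{R}$) that interchanges $\mc{R}\bksl F_0'$ with $\mc{R}':=H^{-1}(\mc{V}\cap\EE^n)$ and fixes each point of $\Gamma:=H^{-1}(\mc{V}\cap\TT^n)$ except at the finitely many preimages of singular points of $\mc{V}$ on $\TT^n$ coming from local branches that are not $\iota$-invariant.

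Given $x_1,x_2\in\mc{R}$, take a continuous path $\gamma\colon[0,1]\to\mc{M}$ from $x_1$ to $x_2$ (possible since $\mc{M}$ is connected), and perturb $\gamma$ slightly so that its trace avoids the finite exceptional set on $\Gamma$ just described. Define
\[
\tilde\gamma(t)=\begin{cases}\gamma(t),&\gamma(t)\in\mc{R}\cup\Gamma,\\ \iota_\mc{M}(\gamma(t)),&\gamma(t)\in\mc{R}'.\end{cases}
\]
Since $\iota_\mc{M}$ fixes each point of $\Gamma$ that $\gamma$ visits, the two cases agree at every transition time, so $\tilde\gamma$ is a continuous path in $\mc{R}\cup\Gamma$ from $x_1$ to $x_2$. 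Lemma \ref{NUE:VcapT} then shows that at each point $y\in\Gamma$ on the trace of $\tilde\gamma$ lying over a regular point of $\mc{V}$, a small neighborhood of $y$ in $\mc{M}$ is a disc split by $\Gamma$ into a half-disc in $\mc{R}$ and a half-disc in $\mc{R}'$; at the remaining points $y\in\Gamma\cap\tilde\gamma([0,1])$ lying over singular points of $\mc{V}$ on $\TT^n$, the local parametrization $\grs_{z,S}$ from Section~\ref{DesingSubsec} still exhibits the preimage of a neighborhood in $\mc{M}$ as a disc whose $\mc{R}$-locus meets $y$. Covering $\tilde\gamma([0,1])\cap\Gamma$ by finitely many such discs and locally pushing $\tilde\gamma$ into the $\mc{R}$-side of each yields a continuous path in $\mc{R}$ between $x_1$ and $x_2$.

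The main obstacle I foresee is carrying out this last perturbation coherently when $\tilde\gamma$ traces arcs within $\Gamma$ rather than crossing transversally, particularly over singular points of $\mc{V}$ on $\TT^n$ where the local structure of $\Gamma$ can be more intricate than a smooth curve. The distinguished-variety property, however, globally partitions a tubular neighborhood of $\Gamma$ in $\mc{M}$ into $\mc{R}$-preimages of $\DD^n$-points and $\mc{R}'$-preimages of $\EE^n$-points, and this global identification legitimizes a consistent local choice of the $\mc{R}$-side of $\Gamma$ along the entire trace of $\tilde\gamma$.
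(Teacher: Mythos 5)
Your proposal uses the same core idea as the paper's proof: reflect a path through the anti-holomorphic involution $z\mapsto 1/\cc{z}$ so that it lands in the $\cc{\DD^n}$-side, then push it off the $\TT^n$-boundary into the interior. The paper, however, executes this downstairs rather than upstairs: it first reduces connectedness of $\mc{R}$ to path-connectedness of $\mc{V}^*\cap\DD^n$ (the discs attached over singular points are glued along punctured neighborhoods lying in $\mc{V}^*\cap\DD^n$), then reflects a path in $\mc{V}^*$, modifying it to avoid the finitely many singular points of $\mc{V}$. This sidesteps lifting $\iota$ to $\mc{M}$ altogether and, more importantly, guarantees that the reflected path meets $\mc{V}\cap\TT^n$ only at regular points, where Lemma \ref{NUE:VcapT} yields the clean half-disc picture needed for the final push into $\DD^n$. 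Your upstairs version has to cope with the local structure of $\Gamma$ near preimages of singular boundary points, which is exactly what your closing paragraph worries about; perturbing the path in the real two-manifold $\mc{M}$ to avoid those finitely many points, as you propose, is the right repair. Incidentally, your concern about non-$\iota$-invariant branches is unfounded: any local branch $S$ of $\mc{V}$ at a singular $z\in\TT^n$ contains regular points of $\mc{V}\cap\TT^n$ arbitrarily close to $z$ (these are the non-central points of $\Gamma\cap D_{z,S}$), each fixed by $\iota$, which forces $\iota(S)=S$ and hence $\iota_{\mc{M}}$ to fix all of $\Gamma$. The paper leaves the final push-off step at the same level of informality (``by making the obvious modifications''), so your argument is correct in spirit; it is simply routed through $\mc{M}$ where the paper more economically stays in $\mc{V}$.
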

\begin{proof}
  Since $\mc{M}$ consists of discs glued to $\mc{V}^*$, it suffices to show that $\mc{V}^*\cap\DD^n$ is path-connected. Fix $x,y\in \mc{V}^*$. Because $\mc{V}$ is irreducible, $\mc{V}^*$ is path-connected and so there is a path $\grg:[0,1]\to\mc{V}^*$ connecting $x$ and $y$. Then
  \[ \grb(t)=\begin{cases}
               \grg(t), & \grg(t)\in\mc{V}\cap\cc{\DD^n} \\
               1/\cc{\grg(t)}, & \grg(t)\in\mc{V}\cap\EE^n
             \end{cases}
 \]
  defines a path in $\mc{V}\cap\cc{\DD^n}$ connecting $x$ and $y$. Here $1/\cc{z}=(1/\cc{z}_1,\dots,1/\cc{z}_n)$ when $z=(z_1,\dots,z_n)\in(\CC\bksl\{0\})^n$. Since the singular set of $\mc{V}$ is finite, we modify $\grg$ so that $\grb$ is contained in $\mc{V}^*\cap\cc{\DD^n}$.  The path $\grb$ meets $\mc{V}\cap\TT^n$ only at regular points of $\mc{V}$, and so, by making the obvious modifications to $\grb$ near these points, we produce a path joining $x$ and $y$ entirely contained in $\mc{V}^*\cap\DD^n$.
\end{proof}

\begin{lemma}\label{NUE:BndryR}
  $\pd\mc{R}=H^{-1}(\mc{V}\cap\TT^n)$
\end{lemma}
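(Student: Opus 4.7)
The plan is to verify the two inclusions separately. First I would observe that $\mc{R} = H^{-1}(\mc{V}\cap\DD^n) = H^{-1}(\DD^n)$ (since $H(\mc{M})\sbse \mc{V}$) is open in $\mc{M}$ by continuity of $H$, so in particular $\pd\mc{R}\sbse \mc{M}\bksl\mc{R}$. For the inclusion $\pd\mc{R}\sbse H^{-1}(\mc{V}\cap\TT^n)$, take $x\in\pd\mc{R}$. Then $x\nin\mc{R}$ forces $H(x)\nin\DD^n$, while approximating $x$ from within $\mc{R}$ gives $H(x)\in\cc{\DD^n}$ by continuity. Since $\mc{V}$ is distinguished, $H(x)\in\mc{V}\sbse\DD^n\cup\TT^n\cup\EE^n$, and these combine to yield $H(x)\in\TT^n$.

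For the reverse inclusion $H^{-1}(\mc{V}\cap\TT^n)\sbse\pd\mc{R}$, take $x\in H^{-1}(\mc{V}\cap\TT^n)$; I need to produce a sequence in $\mc{R}$ converging to $x$. I would split based on whether $y=H(x)$ is a regular or singular point of $\mc{V}$. In the regular case, $H$ is a local biholomorphism at $x$. Lemma \ref{NUE:VcapT} supplies $y_m\in\mc{V}\cap\DD^n$ with $y_m\to y$, and since the singular set of $\mc{V}$ is $0$-dimensional (and hence finite as $\mc{V}$ is algebraic), we may assume $y_m\in\mc{V}^*$; then the inverse of the local chart lifts these to a sequence $x_m\in\mc{R}$ tending to $x$.

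The main obstacle is the singular case, since Lemma \ref{NUE:VcapT} only asserts $y\in\cc{\mc{V}\cap\DD^n}$ and does not by itself guarantee that the approximation can be carried out along the particular irreducible component $S$ corresponding to $x=x_{y,S}$. To handle this I would use the explicit chart $\grs_{y,S}:D_{y,S}\to S\cap\grD_{y,S}$ from the desingularization. By construction, the $n$-th coordinate of $\grs_{y,S}(w)$ equals $y_n+\eta w^k$, a non-constant holomorphic function of $w$. Openness of this map ensures that in any neighborhood of $0\in D_{y,S}$ there exist $w_m\to 0$ with $|y_n+\eta w_m^k|<1$. Since $\grs_{y,S}(w_m)\in\mc{V}$ and $\mc{V}$ is distinguished, the presence of a coordinate of modulus strictly less than $1$ rules out the $\TT^n$ and $\EE^n$ pieces, forcing $\grs_{y,S}(w_m)\in\mc{V}\cap\DD^n$. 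Under the identification of a neighborhood of $x$ in $\mc{M}$ with $D_{y,S}$, these $w_m$ produce a sequence in $\mc{R}$ tending to $x$, which completes the reverse inclusion.
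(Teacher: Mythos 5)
Your proof is correct and follows essentially the same route as the paper: you establish $\pd\mc{R}\sbse H^{-1}(\mc{V}\cap\TT^n)$ via the distinguished-variety property (the paper cites Lemma~\ref{NUE:VcapT} for this, whose proof rests on the same fact), and for the reverse inclusion you treat regular points by the local homeomorphism and singular points via the chart $\grs_{z,S}$ with $\pi\circ\grs_{z,S}(w)=z_n+\eta w^k$, exactly as the paper does. Your remark that Lemma~\ref{NUE:VcapT} alone does not supply approximation along the particular analytic branch $S$ is a good articulation of the point the paper's argument is implicitly addressing, and your appeal to openness of $w\mapsto z_n+\eta w^k$ is a slightly more explicit justification of the step the paper states with a bare ``Thus.''
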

\begin{proof}
  We immediately see that $\cc{\mc{R}}\sbse H^{-1}(\mc{V}\cap\cc{\DD^n})$ and so, by Lemma \ref{NUE:VcapT},
\[ \pd\mc{R}\sbse H^{-1}(\mc{V}\cap\TT^n). \]
Thus it suffices to show that $H^{-1}(\mc{V}\cap\TT^n)\sbse \cc{\mc{R}}$. Since $H$ is essentially the identity map over $\mc{V}^*$, we restrict our attention to singular points. Suppose $x\in H^{-1}(\mc{V}\cap\TT^n)$ so that $z=H(x)$ is singular, and let $x$ correspond to the center of $D_{z,S}$ for an irreducible analytic component $S$ of $\mc{V}\cap U_z$. For some neighborhood $W$ of $x$, the map $H|W$ is a homeomorphism of $W$ onto $S\cap\grD_{z,S}$. We recall that $\pi:(w_1,\dots,w_n)\mapsto w_n$ sends $S\cap\grD_{z,S}$ onto $\pi(\grD_{z,S})$, and that there is a number $\eta>0$ and an integer $k>0$ so that $\pi(\grs_{z,S}(w))=z_n+\eta w^k$ for every $w\in D_{z,S}$. Thus we can find a sequence $z^{(1)},z^{(2)},\dots \in S\cap\grD_{z,S}\cap\DD^n$ so that $z^{(j)}\to z$. Since $H|W$ is a homeomorphism onto $S\cap\grD_{z,S}$, the sequence $\{(H|W)^{-1}(z^{(i)})\}_i$ in $\mc{R}$ converges to $x$.
\end{proof}

We can say more about the boundary of $\mc{R}$. If $x\in\pd\mc{R}$ is such that $H(x)\in\mc{V}^*$, then we easily see that there is a neighborhood $W$ of $x$ so that $\mc{R}\cap W$ is diffeomorphic to $\{w\in\DD:\Im(z)>0\}$, with $\pd\mc{R}\cap W$ being sent smoothly onto the interval $(-1,1)$. Now suppose $x\in\pd\mc{R}$ so that $z=H(x)$ is a singular point of $\mc{V}$, and let $\pi:(w_1,\dots,w_n)\mapsto w_n$ on $\CC^n$. There exists a neighborhood $W$ of $x$, a biholomorphic map $\grs:\DD\to W$ with $\grs(0)=x$, a positive integer $k$, and a number $\eta>0$ so that $\pi(H(\grs(w)))=z_n+\eta w^k$ and $\pi$ sends $H(W)$ onto the disc $\grD_n$ of radius $\eta$ about $z_n$. In particular, $\pi(H(\mc{R}\cap W))=\DD\cap\grD_n$ and
\[ (\pi\of H\of \grs)^{-1}(\DD\cap\grD_n)=\{w\in\DD:|z_n+\eta w^k|<1\}. \]
Thus $\mc{R}\cap W$ is conformally equivalent to this subset of the plane. For $W$ and thus $\eta$ sufficiently small, we find that $\mc{R}\cap W$ is the union of $k$ disjoint simply connected open sets $W_1,\dots,W_k$. For each $j=1,\dots,k$, the set $W\cap\pd W_j$ consists of two simple smooth curves meeting only at $x$, and $\cc{W}_i\cap \cc{W}_j=\{x\}$ whenever $i\neq j$. When $k>1$, we call such an $x$ a \textit{star point} of $\mc{R}$. Note that the set of star points of $\mc{R}$ is a finite subset of $\pd\mc{R}$.

In proving Theorem \ref{FinalSec:MainThm}, we use some results concerning finite Riemann surfaces. A \textit{finite Riemann surface} is a domain $\grW$ in a Riemann surface with the property that $\cc{\grW}$ is compact and $\pd\grW$ consists of a finite number of disjoint simple closed smooth curves. Due to the possible existence of star points, our domain $\mc{R}$ may not be a finite Riemann surface as a subset of $\mc{M}$. We can, however, embed $\mc{R}$ into another Riemann surface where the star points are absent. This is done as follows. Let $B$ denote the set of star points of $\mc{R}$, and for a given $b\in B$ let $W$ be a coordinate neighborhood of $b$ in $\mc{M}$ so that $W\cap B=\{b\}$ and $W\cap\mc{R}=W_1\cup\dots\cup W_k$, as above. Glue $\cc{W}_i$ to $\cc{\mc{R}}\bksl W$ along their intersection $\pd W_i\cap\pd(\cc{\mc{R}}\bksl W)$ for each $i\in\{1,\dots,k\}$. Repeating this process for each $b\in B$, we produce a topological space $\cc{\mc{S}}$ and a quotient map $\grj:\cc{\mc{S}}\to \cc{\mc{R}}$ such that $\grj^{-1}(B)$ is finite and $\grj$ sends $\cc{\mc{S}}\bksl \grj^{-1}(B)$ homeomorphically onto $\cc{\mc{R}}\bksl B$. Because of this, we define $\mc{S}=\grj^{-1}(\mc{R})$ and equip $\mc{S}$ with the structure of a Riemann surface. Since the boundary of $\mc{R}$ near each point in $\pd\mc{R}\bksl B$ is given by a simple smooth curve, we can make $\mc{S}$ into a Riemann surface with boundary once we provide a boundary chart at each point of $\grj^{-1}(B)$. To do this, let $b,W_1,\dots,W_k$, and $W$ be as before. For a given $i$, let $x_i$ be the point in $\grj^{-1}(b)$ corresponding to the connected component $W_i$. As $W_i$ is biholomorphic to a simply connected subset of $\CC$, there is a biholomorphic map $\phi$ from $W_i$ onto the open upper half-disc $\DD^+$ that sends $W\cap \pd W_i$ onto the interval $(-1,1)$ and $b$ to $0$. We then take $\phi\of \grj$ to the boundary chart at $x_i$. By \cite[\S I.13.H]{AlSar}, the surface $\mc{S}$ can be embedded into a compact (closed) Riemann surface, the \textit{double} of $\mc{S}$, thus making $\mc{S}$ into a finite Riemann surface.

We define $h:\cc{\mc{S}}\to \mc{V}\cap\cc{\DD^n}$ to be the composition $h=H\of\grj$. Then $h$ is a proper finite continuous map, and $h$ is a diffeomorphism over $\mc{V}^*\cap\cc{\DD^n}$. We also see from \ref{NUE:BndryR} that $\pd\mc{S}=h^{-1}(\mc{V}\cap\TT^n)$, as desired.

\subsection{Proof of Theorem \ref{FinalSec:MainThm}}

Let $\grW$ be a domain in a Riemann surface so that $\cc{\grW}$ is compact. We denote by $A(\grW)$ the algebra of functions continuous on $\cc{\grW}$ and analytic on $\grW$. With $h:\cc{\mc{S}}\to\mc{V}\cap\cc{\DD^n}$ as above, we denote by $A_h(\mc{S})$ the closed unital subalgebra of $A(\mc{S})$ generated by the coordinate functions $h_1,\dots,h_n$ of $h$

As discussed in the beginning section 3 of \cite{AKM}, it follows from \cite{Hyperbol} that there is a finite subset $Y$ of $\mc{S}$ such that any function in $A(\mc{S})$ which vanishes to sufficiently high order on $Y$ also belongs to $A_h(\mc{S})$. It follows from this that $A_h(\mc{S})$ has finite codimension in $A(\mc{S})$, and if $g\in A_h(\mc{S})$ vanishes to sufficiently high order on $Y$, then $g\cdot A(\mc{S})\sbse A_h(\mc{S})$. In particular, there exists a one variable polynomial $Q$ so that $Q(h_n)\cdot A(\mc{S})\sbse A_h(\mc{S})$.

When $\grW$ is a finite Riemann surface and $\tau$ is a finite positive Borel measure supported on $\pd\grW$, we denote by $A^2(\grW,\tau)$ the $L^2(\tau)$-closure of $A(\grW)$. When $\grW=\mc{S}$, we write instead $A^2(\tau)$ and define $A_h^2(\tau)$ to be the $L^2(\tau)$-closure of $A_h(\mc{S})$. Given a bounded Borel function $f$ on $\pd\mc{S}$, we denote by $M_{f,\tau}$ and $N_{f,\tau}$ the operators for multiplication by $f$ on $A^2(\tau)$ and $A_h^2(\tau)$, respectively. If $F=(f_1,\dots,f_n)$ is an $n$-tuple of bounded Borel functions on $\pd\mc{S}$, then we set $M_{F,\grk}=(M_{f_1,\grk},\dots,M_{f_n,\grk})$ and $N_{F,\grk}=(N_{f_1,\grk},\dots,N_{f_n,\grk})$.

\begin{corollary}\label{NUE:AtmlsLem1}
  Let $\tau$ be a diffuse finite positive measure on $\pd\mc{S}$. If $M_{h,\tau}$ is an $n$-tuple of shifts with finite multiplicity, then the same holds for $N_{h,\tau}$, and $M_{h,\tau}\approx N_{h,\tau}$.
\end{corollary}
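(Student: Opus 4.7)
The plan is to exhibit $N_{h,\tau}$ as the restriction of $M_{h,\tau}$ to a finite codimensional invariant subspace, and then conclude via Lemma \ref{NUE:PrecLem}. Because $A_h(\mc{S})\sbse A(\mc{S})$, taking $L^2(\tau)$-closures gives an inclusion $A_h^2(\tau)\sbse A^2(\tau)$. Each coordinate $h_i$ is bounded on $\pd\mc{S}$ (indeed, $h(\pd\mc{S})\sbse\mc{V}\cap\TT^n$), and multiplication by $h_i$ sends $A_h(\mc{S})$ into itself, so $A_h^2(\tau)$ is an $M_{h_i,\tau}$-invariant subspace. Under this identification, $N_{h,\tau}$ is unitarily equivalent to the restriction $M_{h,\tau}|A_h^2(\tau)$.

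The crux is then to verify that $A_h^2(\tau)$ has finite codimension in $A^2(\tau)$. The discussion preceding the corollary, via \cite{Hyperbol}, supplies the analogous statement in the sup-norm algebra: there is an integer $N$ and elements $g_1,\dots,g_N\in A(\mc{S})$ whose cosets span $A(\mc{S})/A_h(\mc{S})$. Letting $q:A^2(\tau)\to A^2(\tau)/A_h^2(\tau)$ denote the quotient map, every $f\in A(\mc{S})$ admits a decomposition $f=f_0+\sum_{i=1}^N c_i g_i$ with $f_0\in A_h(\mc{S})\sbse A_h^2(\tau)$, whence $q(f)\in\mathrm{span}(q(g_1),\dots,q(g_N))$. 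Since $A(\mc{S})$ is dense in $A^2(\tau)$, its image under $q$ is dense in $A^2(\tau)/A_h^2(\tau)$, but this image lies in a finite dimensional, hence closed, subspace. Consequently $\dim A^2(\tau)/A_h^2(\tau)\leq N$, so $N_{h,\tau}\lesssim M_{h,\tau}$.

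With this precedence in hand, Lemma \ref{NUE:PrecLem} finishes the argument. The hypothesis guarantees that each $M_{h_i,\tau}$ is a shift, so in particular at least one component of $M_{h,\tau}$ has no eigenvalues. Applying the lemma with $\VV=M_{h,\tau}$ and $\WW=N_{h,\tau}$ yields simultaneously that $N_{h,\tau}$ is an $n$-tuple of commuting shifts of finite multiplicity and that $N_{h,\tau}\approx M_{h,\tau}$.

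I expect no serious obstacle beyond the passage from the uniform to the $L^2(\tau)$ topology in the codimension computation; the worry would be that closure somehow enlarges $A_h^2(\tau)$ to include combinations of the $g_i$ in an unbounded way, but the density-plus-finite-dimensional-closedness argument sketched above rules this out cleanly. Everything else is routine bookkeeping within the framework already assembled in Lemma \ref{NUE:PrecLem}.
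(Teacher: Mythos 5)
Your argument is correct and follows the same strategy as the paper's: exhibit $A^2_h(\tau)$ as a finite codimensional $M_{h,\tau}$-invariant subspace of $A^2(\tau)$ (via the finite codimension of $A_h(\mc{S})$ in $A(\mc{S})$ together with a density-in-quotient argument), then invoke Lemma \ref{NUE:PrecLem}. One small deviation: the paper devotes its first few lines to showing, via the diffuseness of $\tau$, that $N_{h_n,\tau}$ has no eigenvalues, whereas you simply observe that the required ``no eigenvalues'' hypothesis of Lemma \ref{NUE:PrecLem} is already met by $M_{h_n,\tau}$ being a shift; your route is both correct and marginally tidier since, for this implication, the diffuseness of $\tau$ is not actually needed.
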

\begin{proof}
    Since $\tau$ has no atoms, $N_{h_n,\tau}$ has no eigenvalues. Indeed, if $N_{h_n,\tau}f=\grl f$ for some $f\in A^2(\tau)$ and $\grl\in\TT$, then $\int |h_n-\grl|^2 |f|^2d\tau=0$. Since $h_n-\grl$ has only finitely many zeros in $\cc{\mc{S}}$, it follows that $f=0$ $\tau$-a.e. Thus, as $A^2_h(\tau)$ has finite codimension in $A^2(\mu)$, the corollary now follows from Lemma \ref{NUE:PrecLem}.
\end{proof}

Let $\grW$ is a finite Riemann surface, and denote by $u_f$ the harmonic function on $\grW$ with boundary values given by $f\in C(\pd\grW)$. For each $a\in \grW$ there is a Borel probability measure $\grw_a$ on $\pd\grW$ such that
\[ u_f(a)=\int_{\pd\grW} f d\grw_a. \]
In particular, when $v$ is a continuous function on $\cc{\grW}$ that is harmonic on $\grW$, we have $v(a)=\int_{\pd\grW} vd\grw_a$. By \cite{Wermer}, $A(\grW)$ is a hypo-Dirichlet algebra, implying in particular that the set $\{\log|g|:g\text{ invertible in }A(\grW)\}$ has dense span in $C(\pd\grW)$. Thus the measure $\grw_a$ is unique; it is called the \textit{harmonic measure} of $\grW$ corresponding to $a$. Note that $\grw_a$ is a representing measure for the character $f\mapsto f(a)$ on $A(\grW)$, and if $f\in A(\grW)$ is invertible, then
\[ \log|f(a)|=\int_{\pd\grW}\log|f|d\grw_a; \]
that is, $\grw_a$ is an \textit{Arens-Singer} measure.

As seen in \cite{SchifSpen}, every finite Riemann surfaces posses a Green's function $G_a$ for the point $a$, and an application of Green's formula shows that $d\grw_a$ is equal to $-*dG_a$, where $*$ is the conjugation operator. The following two statements concerning harmonic measure on finite Riemann surfaces seem to be well known, but we lack a good reference and thus sketch a proof of each.
\begin{enumerate}
  \item[\rm{(1)}] Let $b\in \pd\grW$, $U$ a coordinate neighborhood about $b$, and $\phi:U\to\DD$ a chart sending $b$ to $0$. Since $\pd\grW$ is smooth, we suppose $U\cap\pd\grW$ is sent smoothly onto the interval $(-1,1)$. If $E\sbse U\cap\pd\grW$, then $\grw(E)=0$ if and only if $\phi(E)$ has Lebesgue measure 0.
  
  To prove this, we modify the argument found in \cite[\S II.2]{GarMar}. We suppose $\phi$ sends $U\cap\grW$ onto the upper half-disc $\DD^+$ and $U$ is small enough that $a\nin U$. The push-forward of $-*dG_a$ from $U\cap\pd\grW$ to $(-1,1)$ is given by $\frac{\pd(G_a\of\phi^{-1})}{\pd y}dx$, where $x+iy$ is the coordinate function on $\DD$. Since $G_a\of\phi^{-1}$ is harmonic on $\DD^+$ and zero on $(-1,1)$, there is a real valued harmonic function $u$ on $\DD$ given by $u(z)=G_a(\phi^{-1}(z))$ when $\Im(z)\geq 0$ and by $u(z)=-G_a(\phi^{-1}(\cc{z}))$ when $\Im(z)<0$. There is then an analytic function $f$ on $\DD$ with $\Im(f)=u$ and $f(0)=0$. Since $f$ sends $\DD^+$ into $\DD^+$, it follows that $(\pd u/\pd y)(0)=f'(0)>0$. A similar argument may be carried out for any point in $(-1,1)$, and therefore $\int_{\phi(E)} \frac{\pd(G_a\of\phi^{-1})}{\pd y}dx=0$ if and only if $\phi(E)$ has Lebesgue measure 0.
  
  \item[\rm{(2)}] If $a,b\in \grW$, then $\grw_a$ and $\grw_b$ are mutually absolutely continuous. Furthermore, there is a number $c>1$ (depending on $a$ and $b$) so that
  \[ \frac{1}{c}<\frac{d\grw_a}{d\grw_b}<c. \]
  
  Since $\pd\grW$ is compact, it suffices to verify these statements locally. Let $\phi$ and $U$ be as above. By (1), $(\grw_a|U)\of \phi^{-1}$ and $(\grw_b|U)\of \phi^{-1}$ are both mutually absolutely continuous with respect to Lebesgue measure on $(-1,1)$. In particular, we see that $d((\grw_a|U)\of \phi^{-1})/dx$, for example, is a strictly positive continuous function on $(-1,1)$.
\end{enumerate}
We remark that (2) can also be proved using Harnack's inequality together with the observation that $a\mapsto \grw_a(E)$ is a harmonic function for any Borel $E\sbse \pd\grW$. The following lemma is now a consequence of (1).

\begin{lemma}\label{NUE:MeasZeroLem}
    Let $\grw$ be harmonic measure for a point in $\mc{S}$ and $E$ a Borel subset of $\pd\mc{S}$. If $\grw(E)=0$, then $h_n(E)\sbse \TT$ has Lebesgue measure 0.
\end{lemma}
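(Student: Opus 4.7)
The plan is to show that $h_n$ is smooth (in fact real analytic) along $\partial\mc{S}$ via Schwarz reflection, and then translate $\omega$-null sets into Lebesgue null sets using property (1) in the paragraph preceding the lemma.

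First I would exploit that $h_n$ is continuous on $\cc{\mc{S}}$, holomorphic on $\mc{S}$, and satisfies $|h_n|=1$ on $\partial\mc{S}$; in particular $h_n$ has no zeros on $\partial\mc{S}$. Fix $x\in\partial\mc{S}$ and a boundary chart $\phi\colon U\to\cc{\DD^+}$ with $\phi(x)=0$ and $\phi(U\cap\partial\mc{S})=(-1,1)$, where $\DD^+$ denotes the open upper half-disc. Setting $g=h_n\circ\phi^{-1}$ on $\cc{\DD^+}$ and extending by $g(w)=1/\cc{g(\cc w)}$ for $w\in\DD\setminus\cc{\DD^+}$ yields, by the Schwarz reflection principle, a meromorphic function on $\DD$; its poles, if any, are the reflections of the zeros of $g$ in $\DD^+$. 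Since $g$ has no zeros on $(-1,1)$, the extension is holomorphic on a neighborhood of $(-1,1)$ in $\DD$, so $g$ is real analytic on $(-1,1)$.

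Next I would cover the compact set $\partial\mc{S}$ by finitely many such boundary neighborhoods $U_1,\dots,U_N$, chosen small enough to exclude the base point $a$ of $\omega$. Given $E\subseteq\partial\mc{S}$ with $\omega(E)=0$, property (1) in the text preceding the lemma gives that $\phi_j(E\cap U_j)\subseteq(-1,1)$ has Lebesgue measure zero for each $j$. The function $g_j=h_n\circ\phi_j^{-1}$ is real analytic on $(-1,1)$, hence Lipschitz on every compact subinterval, and therefore sends Lebesgue-null subsets of $(-1,1)$ to Lebesgue-null subsets of $\TT$. Consequently each $h_n(E\cap U_j)=g_j(\phi_j(E\cap U_j))$ has Lebesgue measure zero in $\TT$, and so $h_n(E)=\bigcup_j h_n(E\cap U_j)$ does as well.

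The step most likely to need care is the Schwarz reflection across $\partial\mc{S}$, which relies on $h_n$ being genuinely analytic up to the boundary in the chart structure of $\mc{S}$. This is precisely the reason $\mc{S}$ was constructed to be a finite Riemann surface with smooth boundary arcs (in particular resolving the star points of $\mc{R}$), so the reflection principle applies uniformly at every point of $\partial\mc{S}$. Once the boundary regularity of $h_n$ is in hand, the remainder of the argument is a routine combination of the local description of $\omega$ from property (1) and the elementary fact that Lipschitz maps preserve Lebesgue null sets.
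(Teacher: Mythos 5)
Your proposal is correct and follows essentially the same route as the paper: translate $\grw$-nullity into Lebesgue-nullity of $\phi(E)$ via property (1), and then use regularity of $h_n\circ\phi^{-1}$ on $(-1,1)$ to push a Lebesgue-null set to a Lebesgue-null subset of $\TT$. The one refinement is that the paper simply asserts $h_n$ is piecewise smooth along $\pd\mc{S}$, whereas you justify real analyticity everywhere on $\pd\mc{S}$ by Schwarz reflection (using $|h_n|=1$ and nonvanishing of $h_n$ near $\pd\mc{S}$), which is a cleaner and more explicit account of the boundary regularity being used.
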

\begin{proof}
    We assume $E$ is contained in the domain of a single boundary chart $\phi$. Since $h_n$ is smooth except possibly at a finite number of points, the map $h_n\of\phi^{-1}$ determines a piecewise smooth map from an interval of $\RR$ into $\TT$. As $\grw(E)=0$, it follows from the preceding remarks that $\phi(E)$ has Lebesgue measure 0, and thus $h_n\of\phi^{-1}$ sends $\phi(E)$ to a set in $\TT$ of Lebesgue measure 0.
\end{proof}

\begin{lemma}\label{NUE:HarmIsomLem}
  Let $\grw$ be harmonic measure for a point $x_0\in\mc{S}$. Then $M_{h,\grw}$ and $N_{h,\grw}$ are both $n$-tuples of shifts of finite multiplicity, and
  \begin{equation*}
      \Ann(M_{h,\grw})=\Ann(N_{h,\grw})=\mc{I}.
  \end{equation*}
\end{lemma}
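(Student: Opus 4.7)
The plan is to verify the lemma in four steps: (i) that each of $M_{h,\grw}$ and $N_{h,\grw}$ is a commuting $n$-tuple of isometries; (ii) that both annihilators equal $\mc{I}$; (iii) that each component is a shift; (iv) finite multiplicity via Theorem~\ref{FinMult:Equiv}. Step (i) is immediate: each $h_i\in A(\mc{S})$ preserves $A^2(\grw)$ and $A^2_h(\grw)$, and $|h_i|\equiv 1$ on $\pd\mc{S}=h^{-1}(\mc{V}\cap\TT^n)$, so multiplication by $h_i$ is isometric, and commutativity is trivial.

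For step (ii), the inclusion $\mc{I}\sbse\Ann(M_{h,\grw})\cap\Ann(N_{h,\grw})$ is immediate from $h(\cc{\mc{S}})\sbse\mc{V}$. Conversely, suppose $p(M_{h,\grw})=0$; then $p\of h=0$ in $L^2(\grw)$. Harmonic measure has full support on $\pd\mc{S}$ (by the local absolute-continuity result recorded in section~\ref{DesingSubsec}) and $p\of h$ is continuous, so $p$ vanishes on $h(\pd\mc{S})=\mc{V}\cap\TT^n$. By Lemma~\ref{NUE:VcapT}, $\mc{V}\cap\TT^n$ contains a smooth real curve through every point of $\mc{V}^*\cap\TT^n$, so $p|_{\mc{V}^*}$ is a holomorphic function on the connected 1-dimensional complex manifold $\mc{V}^*$ vanishing on a set with a limit point. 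The identity principle forces $p\equiv 0$ on $\mc{V}^*$, hence on $\mc{V}$, and since $\mc{I}$ is prime (hence radical) the Nullstellensatz gives $p\in\mc{I}$. The argument for $N_{h,\grw}$ is identical.

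For step (iii), the underlying tuple $\VV$ is an $n$-tuple of shifts with $\Ann(\VV)=\mc{I}$, so no $V_i$ equals $cI$ and therefore $X_i-c\nin\mc{I}$ for any $c\in\CC$; equivalently, $h_i$ is non-constant on $\cc{\mc{S}}$. The maximum principle gives $h_i(\mc{S})\sbse\DD$, and $h_i|_\mc{S}:\mc{S}\to\DD$ is proper since an interior sequence whose image stays in $\DD$ cannot accumulate on $\pd\mc{S}$ (where $|h_i|=1$). A proper non-constant holomorphic map of Riemann surfaces is surjective, so some $x_i^0\in\mc{S}$ satisfies $h_i(x_i^0)=0$. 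Given $f\in\bigcap_j h_i^j A^2(\grw)$, write $f=h_i^j g_j$ with $g_j\in A^2(\grw)$. Using that point evaluation on $A^2(\grw)$ at every $x\in\mc{S}$ is a bounded functional (via absolute continuity of $\grw_x$ with respect to $\grw$), each $A^2(\grw)$ element extends analytically to $\mc{S}$, and the relation $f=h_i^j g_j$ persists in the interior. Then $f$ vanishes to all orders at $x_i^0$, hence identically on the connected surface $\mc{S}$, and injectivity of the boundary-value-to-analytic-extension correspondence (standard Hardy-space theory on finite Riemann surfaces) forces $f=0$ in $A^2(\grw)$. The same argument applies to $N_{h,\grw}$.

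For step (iv), $N_{h,\grw}$ has the constant function $1$ as a cyclic vector by the definition of $A_h(\mc{S})$. For $M_{h,\grw}$, since $A_h(\mc{S})$ has finite codimension in $A(\mc{S})$ (by the remarks before Corollary~\ref{NUE:AtmlsLem1}), choosing a basis $\{f_1,\dots,f_N\}$ of a complement gives a finite cyclic set $\{1,f_1,\dots,f_N\}$ for $M_{h,\grw}$. Theorem~\ref{FinMult:Equiv} then yields finite multiplicity of each $M_{h_i,\grw}$ and $N_{h_i,\grw}$. The main obstacle is the shift step, and within it the injectivity of the extension map $A^2(\grw)\to\mathcal{O}(\mc{S})$; I expect to invoke rather than reprove the classical Hardy-space theory on finite Riemann surfaces for this point.
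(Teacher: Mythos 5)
Your proposal is correct in outline but follows a genuinely different route from the paper. The paper's proof establishes complete non-unitariness of $M_{h,\grw}$ first, via an elementary trick: replace each $h_i$ by the M\"obius transform $g_i=(h_i-h_i(x_0))/(1-\cc{h_i(x_0)}h_i)$, use that $\grw$ is a representing measure for $x_0$ so that $\inner{f}{g^\grb}=0$ whenever $f\in\bigcap_\gra M_{g,\grw}^\gra A^2_h(\grw)$, and conclude $f=0$ from totality of $\{g^\gra\}$. It then feeds this into Lemma~\ref{CharAnn:Prime} to get $\Ann=\mc{I}$, Theorem~\ref{PrmIdl:MainThm} to get shiftness, and Theorem~\ref{FinMult:Equiv} plus Corollary~\ref{NUE:AtmlsLem1} for the rest. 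You instead compute the annihilator directly (full support of $\grw$, identity principle on $\mc{V}^*$, Nullstellensatz) and then prove each $M_{h_i,\grw}$ is a shift directly by producing an interior zero of $h_i$ and using the analytic extension of $A^2(\grw)$-elements. Both chains of reasoning are sound, and your step (ii) and step (iv) are clean; the asymmetry is in step (iii). There you need not only boundedness of point evaluations (which you justify correctly from the mutual absolute continuity of harmonic measures) but the full injectivity of the map $A^2(\grw)\to\mathcal{O}(\mc{S})$, i.e., an F.~and~M.~Riesz/Fatou-type theorem for Hardy spaces on a finite Riemann surface, which you explicitly flag as imported rather than proved. This is indeed a classical fact, but note that the paper's argument deliberately avoids it: the only piece of function theory it needs is the representing-measure identity $\int u\,d\grw=u(x_0)$ for $u\in A(\mc{S})$, which is far more elementary, and it then piggybacks on the structure theorems already established earlier in the paper. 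So what you gain is a self-contained (modulo that one external citation) direct computation of the annihilator and the shift property without the earlier decomposition machinery; what the paper gains is avoiding any serious Hardy-space theory at this stage. If you intend to keep your route, you should at minimum give a precise citation for the injectivity statement.
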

\begin{proof}
  For each $i$, we define $g_i=(h_i-h_i(x_0))/(1-\cc{h_i(x_0)}h_i)$. Then $M_{g,\grw}$ is a completely non-unitary $n$-tuple if and only if $M_{h,\grw}$ is also. We fix $f\in\bigcap_{\gra\in\NN_0^n}M_{g,\grw}^\gra A^2_h(\grw)$ and note that for each $\gra\in\NN^n_0$ there is an $f_\gra\in \Hil$ so that $f=g^\gra f_\gra$. Thus, when $\gra,\grb\in\NN^n_0$ such that $\gra-\grb\in \NN^n$,
  \[ \inner{f}{g^\grb}=\inner{g^{\gra-\grb}f_\gra}{1}=g(x_0)^{\gra-\grb}f_\gra(x_0)=0. \]
  As $\{g^\gra:\gra\in\NN_0^n\}$ is total in $A^2_h(\grw)$, it follows that $f=0$. Thus $M_{h,\grw}$ is completely non-unitary.
  
  Since $\mc{I}\sbse \Ann(M_{h,\grw})$ and $\mc{I}$ is prime, it follows from Lemma \ref{CharAnn:Prime} that $\mc{I}=\Ann(M_{h,\grw})$. Because $M_{h_i,\grw}$ is not multiplication by a scalar for any $i$, Theorem \ref{PrmIdl:MainThm} implies that $M_{h,\grw}$ is an $n$-tuple of shifts, and thus, as $M_{h,\grw}$ is cyclic, we have by Theorem \ref{FinMult:Equiv} that $M_{h_j,\grw}$ has finite multiplicity for each $j$. The lemma now follows from Corollary \ref{NUE:AtmlsLem1}.
\end{proof}

Let $\grW$ be a finite Riemann surface with harmonic measure $\grw$. It is shown in \cite[Lem. 3.9]{AKM} that if $u$ is log-integrable with respect to $\grw$, then there exists an $f\in A^2(\grW,\grw)$ so that $u=|f|^2$ and the closure of $A(\grW)f$ has finite codimension in $A^2(\grW,\grw)$. We remark that while this result is stated in \cite{AKM} for the desingularization of a distinguished variety in $\CC^2$, their argument is carried in the case we have just described. The following lemma is based on \cite[Lem. 3.4]{AKM}.

\begin{lemma}\label{NUE:KKLemma}
  Suppose $\nu$ is a diffuse finite positive measure on $\pd\mc{S}$ and $\grw$ is harmonic measure for $\mc{S}$. If $M_{h,\nu}$ is an $n$-tuple of shifts of finite multiplicity, then $M_{h,\nu}\approx M_{h,\grw}$.
\end{lemma}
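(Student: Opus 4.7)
The plan is to build a single intertwining isometry from $A_h^2(\nu)$ into $A^2(\grw)$ with finite-codimensional range, and then invoke Lemma \ref{NUE:PrecLem}(2) to upgrade the resulting one-sided domination into the desired nearly unitary equivalence.

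First, using Corollary \ref{NUE:AtmlsLem1} applied to $\nu$, and using Lemma \ref{NUE:HarmIsomLem} together with Corollary \ref{NUE:AtmlsLem1} applied to $\grw$, I would note that $M_{h,\nu}\approx N_{h,\nu}$ and $M_{h,\grw}\approx N_{h,\grw}$, and that both $M_{h,\nu}$ and $M_{h,\grw}$ are $n$-tuples of shifts of finite multiplicity. Since the components of $M_{h,\grw}$ are shifts and hence have no eigenvalues, Lemma \ref{NUE:PrecLem}(2) applied with $\WW=M_{h,\nu}$ and $\VV=M_{h,\grw}$, together with the transitivity of $\lesssim$, reduces the task to proving $N_{h,\nu}\lesssim M_{h,\grw}$.

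Next, I would show that $\nu\ll \grw$ and that $\log(d\nu/d\grw)\in L^1(\grw)$. Because $M_{h,\nu}$ is a tuple of shifts, it is completely non-unitary; if one had $A_h^2(\nu)=L^2(\nu)$, then each $M_{h_i,\nu}$ would be multiplication by a unimodular function on all of $L^2(\nu)$ and hence unitary, contradicting this. So $A^2(\nu)\supseteq A_h^2(\nu)$ is a proper subspace of $L^2(\nu)$. The Szeg\H{o}-type theorem for the hypo-Dirichlet algebra $A(\mc{S})$ on the finite bordered Riemann surface $\mc{S}$, which is the backdrop for the results of \cite{AKM} and \cite{Wermer} quoted just before the lemma, then yields $\nu\ll\grw$ and the log-integrability of the density. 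Setting $u=d\nu/d\grw$, the application of \cite[Lem. 3.9]{AKM} recorded immediately above this lemma will produce an $f\in A^2(\grw)$ with $|f|^2=u$ and with $\cc{A(\mc{S})f}$ having finite codimension in $A^2(\grw)$.

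Finally, I would define $\Phi\colon A_h^2(\nu)\to A^2(\grw)$ on $g=p(h)$ by $\Phi(g)=gf$ and extend by continuity. The identity $\int|gf|^2\, d\grw=\int|g|^2\, d\nu$ makes $\Phi$ an isometry, and it manifestly satisfies $\Phi\circ N_{h_i,\nu}=M_{h_i,\grw}\circ\Phi$ for each $i$. Its image is the $L^2(\grw)$-closure of $A_h(\mc{S})f$, which is $M_{h,\grw}$-invariant. Since $A_h(\mc{S})$ has finite codimension in $A(\mc{S})$, this image has finite codimension in $\cc{A(\mc{S})f}$ and hence, by the choice of $f$, in $A^2(\grw)$ itself. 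Thus $N_{h,\nu}\lesssim M_{h,\grw}$, and the first step completes the argument. The hard part will be the Szeg\H{o} step: translating the purely operator-theoretic hypothesis that $M_{h,\nu}$ is a tuple of shifts into the measure-theoretic absolute-continuity-plus-log-integrability conclusion requires the hypo-Dirichlet version of Szeg\H{o}'s theorem on a finite Riemann surface, and one must reduce the multivariable input to such a one-variable assertion (for instance by focusing on the single component $M_{h_n,\nu}$ and using that $h_n$ is non-constant on $\mc{S}$).
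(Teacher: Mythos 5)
Your overall scheme is the same as the paper's: reduce via Lemma \ref{NUE:PrecLem}(2) and Corollary \ref{NUE:AtmlsLem1} to producing a finite-codimensional intertwining embedding, use the log-integrability of $d\nu/d\grw$ together with \cite[Lem. 3.9]{AKM} to produce $f\in A^2(\grw)$, and build the isometry $g\mapsto gf$. That part is correct and essentially identical to the paper.

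The gap is the step you yourself flag as ``the hard part.'' You assert that the hypo-Dirichlet Szeg\H{o} theorem ``then yields $\nu\ll\grw$ and the log-integrability of the density,'' with the justification that $A_h^2(\nu)\subsetneq L^2(\nu)$. This is not right on two counts. First, knowing $A_h^2(\nu)\neq L^2(\nu)$ (or even $A^2(\nu)\neq L^2(\nu)$) does not give absolute continuity of $\nu$: a Szeg\H{o}-type theorem controls only the absolutely continuous part of the measure, and a singular part can coexist with $A^2(\nu)\neq L^2(\nu)$. To kill the singular part you have to exploit the shift hypothesis much more concretely: the paper takes the singular part $\nu_s$, uses the Ahern--Sarason theorem to see representing measures for the evaluation character are absolutely continuous with respect to $\grw$, invokes Forelli's lemma to get $\chi_E\in A^2(\nu)$ for a carrier $E$ of $\nu_s$, pushes forward under an inner function $g$ to a measure $\tau$ on $\TT$ that is shown (via Lemma \ref{NUE:MeasZeroLem}) to be singular with respect to Lebesgue measure, and then uses the Kolmogorov--Krein theorem on the disc to conclude that $N_{h_n,\nu}$ would have a nonzero unitary reducing summand unless $\nu_s=0$. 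None of this is subsumed by the observation that $M_{h,\nu}$ is a shift tuple. Second, even granting $\nu\ll\grw$, log-integrability requires positivity of the Szeg\H{o} infimum $\inf_{f\in\ker\phi}\int|1-f|^2\,d\nu$ taken over the kernel of a \emph{character of $A(\mc{S})$}, not merely over $\CC[h_n]$; the hypothesis that $M_{h,\nu}$ is a tuple of shifts only gives information about the subalgebra $A_h(\mc{S})$. Bridging this gap is what Lemma \ref{NUE:NotInRange} (to pick a good base point $x_0$), and the trick of passing through $Q\circ h_n$ with $(Q\circ h_n)A(\mc{S})\subseteq A_h(\mc{S})$, are for. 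Your suggestion to ``focus on $M_{h_n,\nu}$ and use that $h_n$ is non-constant'' does not get you there, because positivity of the disc-algebra Szeg\H{o} infimum for $h_n$ alone does not imply positivity for the full algebra $A(\mc{S})$. In short, the skeleton of your argument is the right one, but the proof of $\nu\ll\grw$ and $\log(d\nu/d\grw)\in L^1(\grw)$ is both incorrect as stated and missing all of the substantive content that the paper supplies.
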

\begin{proof}
  Suppose $\nu$ is absolutely continuous with respect to harmonic measure $\grw$, and $\int \log(d\nu/d\grw)d\grw>-\infty$. As we have just seen, there then exists an $f\in A^2(\grw)$ so that $|f|^2=d\nu/d\grw$ and the $L^2(\grw)$-closure $\mc{K}$ of $A(\mc{S})f$ has finite codimension in $A^2(\grw)$. We denote by $\mc{K}_0$ the $L^2(\grw)$-closure of $A_h(\mc{S})f$ and note that $\mc{K}_0$ has finite codimension in $\mc{K}$. Thus
  \[ M_{h,\grw}|\mc{K}_0=N_{h,\grw}|\mc{K}_0\lesssim N_{h,\grw}|\mc{K} \lesssim N_{h,\grw}. \]
  For $g_1,g_2\in A_h(\mc{S})$ we have
  \[ \inner{g_1 f}{g_2f}=\int g_1\cc{g_2}|f|^2d\grw=\int g_1\cc{g_2}d\nu. \]
  Thus $M_{h,\grw}|\mc{K}_0$ is unitarily equivalent to $M_{h,\nu}$, and so $M_{h,\nu}\lesssim N_{h,\grw}$. Lemma \ref{NUE:PrecLem} now implies that $M_{h,\nu}\approx N_{h,\grw}$, and so $M_{h,\nu}\approx M_{h,\grw}$ by Lemma \ref{NUE:AtmlsLem1}. To prove the lemma, it therefore suffices to show that $\nu$ is absolutely continuous with respect to $\grw$ and $\log(d\nu/d\grw)\in L^1(\grw)$.
  
  We first show that $\nu$ is absolutely continuous with respect to harmonic measure $\grw$ for a point $x_0\in\mc{S}$. By Corollary \ref{NUE:AtmlsLem1}, $N_{h,\nu}$ is an $n$-tuple of shifts of finite multiplicity. Denote by $\phi$ the character of $A(\mc{S})$ given by evaluation at $x_0$. As mentioned before, $A(\mc{S})$ is a hypo-Dirichlet algebra and $\grw$ is an Arens-Singer measure. Thus, by the first corollary of \cite[Thm. 3.1]{AhernSar}, every representing measure for $\phi$ is absolutely continuous with respect to $\grw$. We denote by $\nu_s$ the part of $\nu$ that is singular with respect to $\grw$, and note that there is an $F_\grs$-subset $E$ of $\pd\mc{S}$ such that $\grw(E)=0$ and $\nu_s(E^c)=0$. By Forelli's Lemma (see \cite[Lem. II.7.3]{Gamelin}), the characteristic function $\chi_E$ of $E$ is contained in $A^2(\nu)$. We set $g=(h_n-h_n(x_0))/(1-\cc{h_n(x_0)}h_n)$, $\tau=\nu_s\of g^{-1}$, and denote by $\mc{H}_0$ the cyclic subspace of $A^2(\nu)$ generated $N_{g,\nu}$ and $\chi_E$. For any $p,q\in\CC[X_n]$,
  \[ \inner{p(g)\chi_E}{q(g)\chi_E}=\int_{\pd\mc{S}}(p\of g)\cc{q\of g}d\nu_s=\int_\TT p(w)\cc{q(w)}d\tau(w), \]
  where we have used the fact that $g(\pd\mc{S})=\TT$. Thus the algebra generated by $N_{g,\nu}|\mc{H}_0$ is unitarily equivalent to the disc algebra $A(\DD)$ acting on its $L^2(\tau)$-closure. It follows from Lemma \ref{NUE:MeasZeroLem} that $g(E)\sbse \TT$ has Lebesgue measure 0, and we easily see that $\tau(g(E)^c)=0$. That is, $\tau$ is singular with respect to Lebesgue measure on $\TT$. It now follows from the Kolmogorov-Krein theorem on the disc that
  \[ \inf_{f\in A(\DD)}\int_\TT |1-wf(w)|^2 d\tau(w) =0. \]
  Thus $N_{g,\nu}\mc{H}_0=\mc{H}_0$, implying that $N_{h_n,\nu}|\mc{H}_0$ is a unitary operator and so $\mc{H}_0=\{0\}$. In particular, $\nu_s(E)=\|\chi_E\|^2=0$ and therefore $\nu$ is absolutely continuous with respect to $\grw$.
  
  Suppose there is a harmonic measure $\grw_1$ for some point in $\mc{S}$ so that we have $\int \log(d\nu/d\grw_1)d\grw_1>-\infty$. For any other choice of harmonic measure $\grw_2$, there is a constant $c>1$ so that $1/c<d\grw_1/d\grw_2<c$, and thus
  \[ \int\bigg|\log\left(\frac{d\nu}{d\grw_2}\right)\bigg|d\grw_2\leq c\int\bigg|\log\left(\frac{d\nu}{d\grw_1}\right)\bigg|d\grw_1+|\log(c)|. \]
  It therefore suffices to show that $\int \log(d\nu/d\grw)d\grw>-\infty$ for just one choice of harmonic measure $\grw$.
  
  Let $Q\in\CC[X_n]$ such that $(Q\of h_n)A(\mc{S})\sbse A_h(\mc{S})$ and let $z_n\in\DD$ satisfy $Q(z_n)\neq 0$. By Lemma \ref{NUE:NotInRange}, there is a $z\in Z(\mc{I})\cap\DD^n$ so that
  \[ \inf_{p\in\mc{J}}\int |1-p\of h|^2 d\nu >0 \]
  where $\mc{J}$ is the ideal in $\CC[\XX]$ generated by $X_1-z_1,\dots,X_n-z_n$. Choose $x_0$ so that $z=h(x_0)$ and let $\grw$ denote the corresponding harmonic measure. We again denote by $\phi$ the character on $A(\mc{S})$ given by evaluation at $x_0$. In order to show that $\int \log(d\nu/d\grw)d\grw>-\infty$ for some $\grw$, it suffices to show, according the corollary of \cite[Thm. 10.1]{AhernSar}, that $\inf_{f\in\ker\phi}\int|1-f|^2d\nu>0$. We note that $\ker(\phi)\cap A_h(\mc{S})\spse (Q\of h_n)\ker\phi$, and so
  \begin{eqnarray*}
      \inf_{f\in A_h(\mc{S})\cap\ker\phi}\int |Q\of h_n-f|^2 d\nu & \leq & \inf_{f\in \ker\phi}\int |1-f|^2 |Q\of h_n|^2 d\nu \\
      & \leq & M\cdot \inf_{f\in\ker \phi}\int |1-f|^2d\nu,
  \end{eqnarray*}
  where $M=\sup_{w\in\TT}|Q(w)|^2$. As $\CC[h_1,\dots,h_n]$ is dense in $A_h(\mc{S})$, it follows that $\mc{J}(h)$ is dense in $A_h(\mc{S})\cap\ker\phi$. Since $Q\of h_n-Q(z_n)\in A_h(\mc{S})\cap\ker \phi$, we have
  \[ |Q(z_n)|^2\cdot \inf_{p\in\mc{J}} \int|1-p\of h|^2 d\nu \leq M\cdot \inf_{f\in\ker\phi} \int |1-f|^2 d\nu. \]
  Thus $\inf_{f\in\ker\phi}\int|1-f|^2d\nu>0$.
\end{proof}

\begin{proof}[Proof of Theorem \ref{FinalSec:MainThm}]
    By Lemma \ref{NUE:ApplyDecomp}, it suffices to consider the case where $\mc{I}=\Ann(\VV)$ is prime. By comments at the beginning of \S \ref{DesingSubsec}, we may (and do) suppose that $\VV=M_{h,\nu}$ for some diffuse finite positive Borel measure $\nu$ on $\pd\mc{S}$. The theorem now follows from Lemma \ref{NUE:KKLemma}.
\end{proof}

{\small

}

\end{document}